\newtheorem{lemma}{Lemma}
\newtheorem{prop}[lemma]{Proposition}
\newtheorem{theorem}[lemma]{Theorem}
\newtheorem{example}{Example}
\newtheorem{algorithm}{Algorithm}
\numberwithin{equation}{section}
\numberwithin{lemma}{section}
\newcommand{\bp}{ \begin{proof} }
\newcommand{\ep}{\hfill \end{proof} }
\newcommand{\be}{ \begin{equation} }
\newcommand{\ee}{ \end{equation} }
\newcommand{\C}{\mathbb{C}}    
\newcommand{\N}{\mathbb{N}}    
\newcommand{\NN}{\mathbb{N}_0} 
\newcommand{\dirac}{\delta}   
\newcommand{\supp}{\operatorname{supp}}
\newcommand{\bo}{\mathscr{O}} 
\newcommand{\gl}{\lambda}
\newcommand{\xb}{x_b} 
\newcommand{\eu}{u_e} 
\newcommand{\EE}{\mathcal{E}}
\newcommand{\EF}{\mathcal{F}}
\newcommand{\tp}{\mathsf{T}}  
\begin{document}

\title[Dirac Assisted Tree Method for Helmholtz Equations]{Dirac Assisted Tree Method for 1D Heterogeneous Helmholtz Equations with Arbitrary Variable  Wave Numbers}

\author{Bin Han, Michelle Michelle, and Yau Shu Wong}

\thanks{
Corresponding author: Bin Han (bhan@ualberta.ca).
Research supported in part by
Natural Sciences and Engineering Research Council (NSERC) of Canada under grant RGPIN-2019-04276, Alberta Innovates and Alberta Advanced Education, Westgrid (www.westgrid.ca), and Compute Canada Calcul Canada (www.computecanada.ca)}

\address{Department of Mathematical and Statistical Sciences,
University of Alberta, Edmonton,\quad Alberta, Canada T6G 2G1.
\quad {\tt bhan@ualberta.ca}\quad {\tt mmichell@ualberta.ca}
\quad{\tt yauwong@ualberta.ca}
}

\makeatletter \@addtoreset{equation}{section} \makeatother

\begin{abstract}
In this paper we introduce a new method called the Dirac Assisted Tree (DAT) method, which can handle 1D heterogeneous Helmholtz equations with arbitrarily large variable wave numbers. DAT breaks an original global problem into many parallel tree-structured small local problems, which are
linked together to form a global solution by solving small linking problems.
To solve the local problems in DAT, we propose a compact finite difference method (FDM) with arbitrarily high accuracy order and low numerical dispersion for piecewise smooth coefficients and variable wave numbers.
This compact FDM is particularly appealing for DAT, because the local problems and their fluxes in DAT can be computed with high accuracy.
DAT with such compact FDMs
can solve heterogeneous Helmholtz equations with arbitrarily large variable wave numbers accurately by solving small linear systems --- $4 \times 4$ matrices in the extreme case --- with tridiagonal coefficient matrices in a parallel fashion.
Several numerical examples are provided to illustrate the effectiveness
of DAT using the $M$th order compact FDMs with $M=6,8$ for numerically solving heterogeneous Helmholtz equations with variable wave numbers. We shall also discuss how to solve some special 2D Helmholtz equations using DAT.
\end{abstract}

\keywords{Heterogeneous Helmholtz equation, variable wave number, accuracy order, numerical dispersion, compact finite difference method, Dirac assisted tree (DAT) method, Dirac distribution}

\subjclass[2020]{65L12, 65N06, 35J05}
\maketitle

\pagenumbering{arabic}

\section{Introduction and Motivations}
The heterogenous Helmholtz equation is given by
\be \label{heqn}
\nabla \cdot (a(x)\nabla u(x))+\kappa^{2}(x)u(x)=f(x), \qquad x\in \Omega
\ee
coupled with suitable boundary conditions on the boundary $\partial \Omega$ of the domain $\Omega$, where the source term $f\in L_2(\Omega)$  and the coefficients $a,\kappa\in L_\infty(\Omega)$
satisfying $\mbox{ess-inf}_{x\in \Omega} a(x)>0$.
In this paper, we mainly focus on one-dimensional (1D) heterogeneous Helmholtz equations with $\Omega=(0,1)$, and particular 2D Helmholtz equations which can be converted and solved through 1D Helmholtz equations.
In practical applications (e.g.,
geophysics \cite{C16, DL19, EOV06, FG17} and electromagnetics \cite{GS20,N01}), the coefficients $a$, the wave number $\kappa$ and the source term $f$ in \eqref{heqn} are often piecewise smooth functions.

The Helmholtz equation is known to be a challenging problem in computational mathematics due to the presence of the pollution effect caused by large wave numbers and the highly ill-conditioned coefficient matrix due to its discretization (e.g., see \cite{EG11}).
The former means that one cannot simply decrease the grid size in proportion to the growth in the wave number, but rather its reduction has to significantly outweigh the growth of the wave number to reduce the pollution effect (i.e., numerical dispersion). In the presence of a very large constant wave number, this means that standard Finite Element Method (FEM) and Finite Difference Method (FDM) inevitably yield a very large ill-conditioned coefficient matrix. More specifically, the authors in \cite{MS11} found that to properly handle a large constant wave number $\kappa$ in the $hp$-FEM setting, the polynomial degree $p$ and the mesh size $h$ need to be chosen such that $p \gtrsim \log(\kappa)$ and the quantity $\kappa h/p$ is small enough. A pre-asymptotic error analysis in the $hp$-FEM setting with the assumption that $\kappa h/p \lesssim (p/\kappa)^{1/(p+1)}$ was also done in \cite{ZW13}. On the other hand, it has been documented that for second-order and fourth-order FDM, the appropriate resolution conditions are respectively $\kappa^{3}h^{2} \lesssim 1$ and $\kappa^{5}h^{4} \lesssim 1$ \cite{CCFW13,DL19,WW14}. Generally speaking, for polynomial-based schemes like standard FEM and FDM, the higher the order a scheme has, the better it is in dealing with the pollution effect.

There is a vast array of methods developed over the years to deal with the demanding resolution condition on the mesh size $h$ discussed above. In the Galerkin setting, one can relax the inter-element continuity condition \cite{FW09}, impose a penalty on the normal jump of the derivatives of elements in trial and test spaces \cite{BWZ16, WW14}, or enrich the local approximation space with solutions of the homogeneous Helmholtz equation \cite{HMP16}. In the FDM setting, numerous schemes with various accuracy, dispersion orders and stencil sizes can be found in the literature \cite{BTT11,cgx18,CGX20,CCFW13, DL19, FLQ11, F12, F08, NSD07, SHC08, ST98, SFL16, TGGT13, WX18, ZWG19}. Furthermore, a pollution-free finite difference scheme for 1D Helmholtz equation with a constant wave number in homogeneous media is available \cite{WW14, WWH17, WL11}. A wave splitting method for 1D Helmholtz equation was analyzed in \cite{PR11}. Many preconditioners and domain decomposition methods have also been proposed for numerical solutions of Helmholtz equations \cite{GZ19}.

Most of the studies and papers cited above mainly deal with constant or smoothly varying wave numbers in homogeneous media. There are significantly fewer studies on the numerical and mathematical analysis of the heterogeneous Helmholtz equation (e.g., \cite{C16,cgx18,EOV06, FG17,GPS19,GS20} and references therein). In the context of FDM, the authors in \cite{BTT11} provided a fourth-order compact finite difference scheme under the assumptions that the coefficient $a$ and the wave number $\kappa$ are smooth functions. The heterogeneous Helmholtz equation is indeed even harder to solve numerically, because we need to capture extra fine scale features on top of the oscillations introduced by the large wave number if the coefficient $a(x)$ in \eqref{heqn} is rough.

Our goal of this paper is to develop
numerical schemes with high accuracy order for solving 1D heterogeneous Helmholtz equation with arbitrarily large variable wave numbers, but without solving enormous ill-conditioned linear systems of equations.
To achieve our goal, we shall present a new method called the Dirac Assisted Tree (DAT) method and develop compact finite difference schemes with arbitrarily high accuracy order.
The key idea of DAT is to
break the global problem on $[0,1]$ into many small local problems which can be effectively solved by any known methods in a parallel fashion. Such local solutions are then stitched together into a desired global solution through solving many small linking problems in a parallel fashion as well.
As a consequence, this leads to parallel systems of linear equations which have much smaller condition numbers and are significantly smaller in size.

More specifically, we multiply the source term $f$ with a partition of unity on $[0,1]$ to break the global source term $f$ into many highly localized source terms $f_j$ for $1\le j\le N_0$ such that $\sum_{j=1}^{N_0} f_j=f$.
For simplicity, we shall use hat linear functions (i.e., the B-spline of order $2$) to create a partition of unity.
Other choices like higher order B-splines or (piecewise) smooth functions are also permissible.
Then for each highly localized source term $f_j$, we solve a local 1D Helmholtz equation on the subinterval of the support of $f_j$.
With the exception of local problems that touch the boundaries $0$ or $1$, we impose homogeneous Dirichlet boundary conditions for all local problems.
Such local problems can be solved by any known discretization method, provided that the fluxes at the endpoints of local problems can be accurately computed (see \cref{sec:dat} for details).
In order to assemble all the local solutions into a global solution, we shall see in \cref{sec:dat} that the Dirac distribution will naturally appear in the fluxes of local solutions. Indeed using the Dirac distribution, we can naturally link all such local solutions together via DAT.
Due to the refinability of the hat function (\cite{hanbook}), the local problems can be further decomposed into sub-local problems. Applying this recursively, we obtain the tree structure of DAT. As we shall see in \cref{sec:dat},
DAT enjoys the following inherent advantages:
\begin{enumerate}[noitemsep,topsep=5pt]
\item[(1)] DAT breaks the global problem into tree-structured small local problems, which can be solved by any methods with reduced pollution effect. All such local solutions are then linked together to form a global solution by solving small linking problems.

\item[(2)] All involved local and linking problems in DAT can be solved in a parallel manner by solving small linear systems often with significantly smaller condition numbers. Extremely speaking, all local and linking problems in DAT can be solved by solving only at most $4$ linear equations.

\item[(3)] The accuracy of DAT only depends on the accuracy of the local problem solvers. If the maximum size of all local problems are bounded and independent of mesh sizes, then all the coefficient matrices for local problems have uniformly bounded condition numbers.

\item[(4)] DAT naturally brings about domain decomposition, which reduces the problem size, and adaptivity, which further improves the approximation. It is efficient due to its tree structure and tridiagonal coefficient matrices.
\item[(5)] Without imposing any magnitude constraints on variable wave numbers, DAT can solve heterogeneous Helmholtz equations with arbitrarily large variable wave numbers and oscillatory jumping coefficients, which other known methods have difficulties in handling.
\end{enumerate}
In order to solve the local problems in DAT and to reduce the numerical dispersion, we shall propose a compact finite difference method (FDM) with arbitrarily high accuracy order. Our second contribution is to rigorously prove that we can always find a 1D compact FDM (that handles both interior and any boundary conditions) with arbitrarily high accuracy orders by assuming that $a, \kappa^{2}, f$ are piecewise smooth.
In a much simpler setting with $a=1$ and $\kappa^{2}=0$, \cite[Theorem 3.1]{SDKS13} already showed that we can always find an arbitrarily accurate compact FDM for the 1D Poisson equation with Dirichlet boundary conditions. Our result generalizes this statement. It also unifies many 1D FDMs for the equations \eqref{heqn} including $\kappa^{2}(x)$ in \eqref{heqn} being replaced by $-\kappa^2(x)$ (i.e., an elliptic problem). Compact stencils are very much desirable, since they give rise to a tridiagonal matrix in the 1D setting. Without losing any accuracy order, our approach can indeed handle all Dirichlet, Neumann, and Robin boundary conditions. Such high order compact FDMs are particularly appealing for DAT, since the fluxes and derivatives of the local solutions can be computed accurately. This preserves the high accuracy of all our linking problems.
As a consequence, in the presence of a very large wave number, we avoid dealing with a very large ill-conditioned coefficient matrix altogether. Instead, we harness parallel computing resources to solve small and much better conditioned coefficient matrices. This makes DAT a very attractive alternative when dealing with heterogeneous Helmholtz equations.

Next, we discuss how our method differs from the partition of unity FEM (PUFEM) presented in \cite{MB96,bm97}. Refer to \cref{ex:constant} and \cref{table:vsPUFEM} in \cref{sec:examples} for a juxtaposition of the numerical performance between DAT, $6$th and $8$th order FDM, as well as PUFEM (see \cite[Section 3]{BS00} for special shape functions used in 1D PUFEM and the corresponding coefficient matrix). One stark difference is in how we apply the partition of unity. Recall that DAT applies the partition of unity to the source term $f$. By the definition of PUFEM, we multiply the partition of unity with local approximation spaces and use this as our test and trial spaces in FEM. The effectiveness of PUFEM hinges on how to find suitable local approximation spaces. For the case of $a(x)=1$ and a constant wave number, the local approximation spaces take the form of plane waves or generalized harmonic polynomials.
In the presence of a large wave number, the trial functions in PUFEM are highly oscillatory. Hence, selecting/developing an appropriate quadrature becomes a major concern and challenge for PUFEM. Moreover, numerical experiments in \cite{WTTF12} indicate that the coefficient matrix of PUFEM has an extremely large condition number, which may produce extra stability issues. For the heterogeneous Helmholtz equation with piecewise smooth coefficients and wave number, finding suitable local approximation spaces can in fact be challenging and computationally expensive. See \cite{FG17} for some work along this direction. Due to our way of applying the partition of unity, the local problems allow us to focus on different sub-domains, but with the same original grid size. This is why as we further decompose the local problems, our coefficient matrices continue to reduce in size and often have uniformly bounded condition numbers.

Since DAT breaks a global problem into several local problems,
DAT may be regarded as a domain decomposition method with some differences.
In many domain decomposition methods, the source term is typically restricted to each subdomain. In DAT, we locally modify our source term by multiplying it with hat functions, which constitute our partition of unity. In addition, to obtain the global solution from the local solutions, we require the aid of the Dirac linking problems. Unlike some domain decomposition methods,
DAT does not require any initial guesses in its algorithm.

The organization of this paper is as follows. In \cref{sec:dat}, we present the DAT method. In \cref{sec:fdm}, to solve the local problems in DAT, we prove that we can always find a 1D compact FDM with arbitrarily high accuracy order for 1D heterogeneous Helmholtz equation in \eqref{heqn} with piecewise smooth coefficients, variable wave numbers, and source terms.
A few examples of such compact FDMs are also presented in this section.
In \cref{sec:convergence}, we discuss the convergence of DAT.
We apply DAT by employing the compact FDMs in \cref{sec:fdm} as local problem solvers to some 1D and 2D problems, and then evaluate its numerical performance in \cref{sec:examples}. Finally, we state our conclusions in \cref{sec:conclusions}.

\section{Dirac Assisted Tree (DAT) Method}
\label{sec:dat}

In this section we shall discuss the key ingredients and algorithm for the DAT method.
Let $\mathcal{L}$ be the associated linear differential operator of the 1D heterogeneous Helmholtz equations in \eqref{heqn}:
\be \label{heqnL}
\mathcal{L} u:=[a(x)u'(x)]'+\kappa^2(x) u(x)=f(x), \qquad x\in \Omega:=(0,1)
\ee
with any given linear boundary conditions
\be \label{heqnbdry}
\mathcal{B}_0 u(0):=\lambda_{0}^{L} u(0) + \lambda_{1}^{L} u'(0)=g_0, \qquad \mathcal{B}_1 u(0):= \lambda_{0}^{R} u(1) + \lambda_{1}^{R} u'(1) =g_1,
\ee
where $\lambda_{0}^{L}, \lambda_{1}^{L}, \lambda_{0}^{R}, \lambda_{1}^{R} \in \C$ satisfy $|\lambda_0^L|+|\lambda_1^L|\ne 0$ and  $|\lambda_0^R|+|\lambda_1^R|\ne 0$. I.e., $\mathcal{B}_0 u(0)$ and $\mathcal{B}_1u(1)$ can be Dirichlet, Neumann or Robin (e.g. Sommerfeld) boundary conditions.

Let $(\alpha,\beta)\subseteq (0,1)$
with $0\le \alpha<\beta\le 1$. Let $f\in (H^{1}(\alpha,\beta))'$ be a source term. Let $u_{loc}\in H^1(\alpha,\beta)$ be the weak solution in the Sobolev space $H^1(\alpha,\beta)$ to the following local problem:
\be \label{heqnL:local}
\mathcal{L} u_{loc}(x)=f(x), \qquad x\in (\alpha,\beta),
\ee
where
if $\alpha,\beta\in \{0,1\}$, then we preserve the boundary conditions as in
\eqref{heqnbdry}; otherwise, we use homogeneous Dirichlet boundary conditions. Putting these boundary conditions into a compact form, the boundary conditions to \eqref{heqnL:local} are given by
\be \label{helmholtz:bc:local}
(\mathcal{B}_0 u_{loc}(0)-g_0)\delta_{0,\alpha}+
(1-\delta_{0,\alpha})u_{loc}(\alpha)=0,\quad
(\mathcal{B}_1 u_{loc}(1)-g_1)\delta_{1,\beta}+
(1-\delta_{1,\beta})u_{loc}(\beta)=0,
\ee
where $\delta_{c,c}=1$ and $\delta_{c,d}=0$ for $c\ne d$.
Recall that $\psi\in H^1(\alpha,\beta)$ if $\psi\in L_2(\alpha,\beta)$ and its weak/distributional derivative $\psi'\in L_2(\alpha,\beta)$. Moreover, $\|\psi\|_{H^1(\alpha,\beta)}^2:=\|\psi\|_{L_2(\alpha,\beta)}^2+\|\psi'\|_{L_2(\alpha,\beta)}^2$, where $\psi'$ stands for the weak derivative of $\psi$.
Due to \eqref{helmholtz:bc:local}, we can extend $u_{loc}\in H^1(\alpha,\beta)$ as an element in $H^1(0,1)$ by zero extension, which is denoted by $\tilde{u}_{loc}$.
Therefore, using the definition of $\mathcal{L}$ in \eqref{heqnL}, we observe that
\be \label{heqnL:2}
\tilde{f}:=\mathcal{L} \tilde{u}_{loc}
=\begin{cases}
0, &\text{$x\in (0,\alpha)\cup (\beta,1)$},\\
d_\alpha(\tilde{u}_{loc})\delta_\alpha, &\text{$x=\alpha$ and $\alpha\ne 0$},\\
f(x), &\text{$x \in (\alpha,\beta)$},\\
d_\beta(\tilde{u}_{loc}) \delta_\beta, &\text{$x=\beta$ and $\beta\ne 1$},
\end{cases}
\ee
where $\delta_\alpha$ is the Dirac distribution at the point $\alpha$ and the above numbers
$d_\alpha(\tilde{u}_{loc}), d_\beta(\tilde{u}_{loc})\in \C$ for $\alpha\ne 0$ and $\beta\ne 1$ are given by
\be \label{dab}
d_\alpha(\tilde{u}_{loc}):=\lim_{x\to \alpha^+} a(x) \tilde{u}_{loc}'(x),\qquad
d_\beta(\tilde{u}_{loc}):=-\lim_{x\to \beta^-} a(x) \tilde{u}_{loc}'(x),
\ee
which, up to a sign change, are simply the fluxes of $\tilde{u}_{loc}$ at $\alpha$ and $\beta$. Then  $\tilde{u}_{loc}$ is a global solution of
\[
\mathcal{L} \tilde{u}_{loc}(x)=\tilde{f}(x),\qquad x\in \Omega=(0,1).
\]

We now introduce the DAT method. Let ${N_0}\in \N$ be a positive integer greater than one. We take a partition of unity $\{\varphi_j\}_{j=0}^{N_0}$ of  piecewise smooth functions such that each function $\varphi_j$ is supported on $[0,1]$ and $\sum_{j=0}^{N_0}\varphi_j(x)=1$ for all $x\in (0,1)$. For simplicity, we use piecewise linear hat functions $\varphi_j$. Let $0=x_0<\cdots<x_{N_0}=1$ be a partition of $[0,1]$.
For simplicity, we define $x_{-1}=0$ and $x_{{N_0}+1}:=1$.
We let $\varphi_j$ be the linear hat function supported on $[x_{j-1},x_{j+1}]$ with
$\varphi_j(x_j)=1$ and $\varphi_j(x_{j-1})=\varphi_j(x_{j+1})=0$.
Obviously, we define $\varphi_0(x_0)=1$ and $\varphi_0(x_1)=0$, while $\varphi_{N_0}(x_{N_0})=1$ and $\varphi_{N_0}(x_{{N_0}-1})=0$.
We now partition the original source function $f$ into small pieces as follows:
\[
f_j(x):=f(x) \varphi_j(x),\qquad j=0,\ldots,{N_0}.
\]
Since $\sum_{j=0}^{N_0} \varphi_j(x)=1$ for all $x\in (0,1)$, we have
$f=\sum_{j=0}^{N_0} f_j$. Let $u_j\in H^1(x_{j-1},x_{j+1})$ be the weak solution to the regular local problem:
\be \label{heqnL:local:fj}
\mathcal{L} u_{j}(x)=f_j(x), \qquad x\in (x_{j-1},x_{j+1})
\ee
with the following boundary conditions:
\be \label{heqnl:local:fj:bdry}
\begin{split}
&(\mathcal{B}_0 u_j(0)-g_0\delta_{0,j})\delta_{0,x_{j-1}}+
(1-\delta_{0,x_{j-1}})u_j(x_{j-1})=0,\quad\\
&(\mathcal{B}_1 u_j(1)-g_1\delta_{N_0,j})\delta_{1,x_{j+1}}+
(1-\delta_{1,x_{j+1}})u_j(x_{j+1})=0.
\end{split}
\ee
That is, we use the homogeneous Dirichlet boundary conditions $u_j(x_{j-1})=u_j(x_{j+1})=0$, except
$\mathcal{B}_0 u_0(0)=g_0$,
$\mathcal{B}_0 u_1(0)=0$,
$\mathcal{B}_1 u_{N_0}(1)=g_1$, and
$\mathcal{B}_1 u_{N_0-1}(1)=0$.
Due to \eqref{heqnl:local:fj:bdry}, we can extend $u_j\in H^1(x_{j-1},x_{j+1})$ as an element in $H^1(0,1)$ by zero extension, which is denoted by $\tilde{u}_j$. Hence,
\be \label{heqnL:3}
\tilde{f}_j(x):=\mathcal{L} \tilde{u}_j(x)=
\begin{cases}
0, &\text{$x\in (0,x_{j-1})\cup (x_{j+1},1)$},\\
d_{x_{j-1}}(\tilde{u}_j)\delta_{x_{j-1}}, &\text{$x=x_{j-1}$ and $x_{j-1}\ne 0$},\\
f_j(x), &\text{$ x \in (x_{j-1},x_{j+1})$},\\
d_{x_{j+1}}(\tilde{u}_j) \delta_{x_{j+1}}, &\text{$x=x_{j+1}$ and $x_{j+1}\ne 1$}.
\end{cases}
\ee
Now we discuss how to link/stitch all these local solutions $\{\tilde{u}_j\}_{j=0}^{N_0}$ together. To do so, for $j=1,\ldots,{N_0}-1$,
we solve the following Dirac assisted local problem:
\be \label{heqnL:local:delta}
\mathcal{L} v_j(x)=\delta_{x_j}, \qquad x\in (x_{j-1},x_{j+1})
\ee
with the following boundary conditions:
\be \label{heqnl:local:dirac:bdry}
\begin{split}
&\mathcal{B}_0 v_j(0)\delta_{0,x_{j-1}}+(1-\delta_{0,x_{j-1}})
v_j(x_{j-1})=0,\quad\\
&\mathcal{B}_1 v_j(1)\delta_{1,x_{j+1}}+
(1-\delta_{1,x_{j+1}})v_j(x_{j+1})=0.
\end{split}
\ee
That is, we use
the homogeneous Dirichlet boundary condition $v_j(x_{j-1})=v_j(x_{j+1})=0$, except $\mathcal{B}_0 v_1(0)=0$ and $\mathcal{B}_1 v_{N_0-1}(1)=0$.
As explained before, due to \eqref{heqnl:local:dirac:bdry}, we can extend $v_j\in H^1(x_{j-1},x_{j+1})$ as an element in $H^1(0,1)$ by zero extension, which is denoted by $\tilde{v}_j$. So, we must have
\be \label{heqnL:4}
\tilde{\delta}_{x_j}(x):=\mathcal{L} \tilde{v}_j(x)=
\begin{cases}
0, &\text{$x\in (0,x_{j-1}) \cup (x_{j+1},1)$},\\
d_{x_{j-1}}(\tilde{v}_j)\delta_{x_{j-1}}, &\text{$x=x_{j-1}$ and $x_{j-1}\ne 0$},\\
\delta_{x_j}, &\text{$x \in (x_{j-1},x_{j+1})$},\\
d_{x_{j+1}}(\tilde{v}_j) \delta_{x_{j+1}}, &\text{$x=x_{j+1}$ and $x_{j+1}\ne 1$}.
\end{cases}
\ee

To link all the local solutions $\{\tilde{u}_j\}_{j=0}^{N_0}$ together, we need the following result.

\begin{theorem}\label{thm:link}
The elements in $\{\tilde{v}_1,\ldots,\tilde{v}_{{N_0}-1}\}$ are linearly independent and for any complex numbers $\mu_j, j=1,\ldots,{N_0}-1$, the following linear system induced by
\be \label{link:uniqueness}
\sum_{j=1}^{{N_0}-1} \tilde{\mu}_j \tilde{\delta}_{x_j}=\sum_{j=1}^{{N_0}-1} \mu_j \delta_{x_j}
\ee
has a unique solution $\{\tilde{\mu}_j\}_{j=1}^{{N_0}-1}$. Moreover, $V=W$, where $V$ is the linear span of $\{\tilde{v}_j\}_{j=1}^{{N_0}-1}$ and $W$ is the linear span of $\{w_j\}_{j=1}^{{N_0}-1}$, where $w_j$ is the weak solution to the following global problem:
\be \label{w}
\mathcal{L} w_j(x)=\delta_{x_j}, \qquad x\in (0,1)\quad \mbox{with}\quad \mathcal{B}_0 w_j(0)=0,\quad \mathcal{B}_1 w_j(1)=0.
\ee
\end{theorem}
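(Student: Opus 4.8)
The plan is to treat the three assertions—linear independence of $\{v_j\}$, unique solvability of \eqref{link:uniqueness}, and the identity $V=W$—as consequences of one structural fact: each $v_j$ differs from the corresponding global Green's-type function $w_j$ only by the addition of a homogeneous solution supported on the two neighbouring subintervals, and the "extra" Dirac masses $d_{x_{j-1}}(v_j)\delta_{x_{j-1}}$ and $d_{x_{j+1}}(v_j)\delta_{x_{j+1}}$ appearing in \eqref{heqnL:4} are exactly what must be cancelled. So first I would set up the correspondence: write, for $\boldsymbol\mu=(\mu_1,\dots,\mu_{N_0-1})$, the function $v_{\boldsymbol\mu}:=\sum_j \tilde\mu_j v_j$ (zero-extended to $(0,1)$) and compute $\mathcal{L}v_{\boldsymbol\mu}$ using \eqref{heqnL:4}; collecting the Dirac masses at each node $x_k$ gives $\mathcal{L}v_{\boldsymbol\mu}=\sum_{k=1}^{N_0-1}\bigl(\tilde\mu_k + \tilde\mu_{k-1}\,d_{x_k}(v_{k-1}) + \tilde\mu_{k+1}\,d_{x_k}(v_{k+1})\bigr)\delta_{x_k}$, with the convention $\tilde\mu_0=\tilde\mu_{N_0}=0$. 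Hence \eqref{link:uniqueness} is equivalent to the tridiagonal linear system $A\tilde{\boldsymbol\mu}=\boldsymbol\mu$ whose diagonal is all ones and whose off-diagonal entries are the fluxes $d_{x_k}(v_{k\pm1})$.

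The crux is then to show this tridiagonal matrix $A$ is invertible; equivalently that $\mathcal{L}v_{\boldsymbol\mu}=0$ (as a distribution on $(0,1)$, with the boundary conditions $\mathcal{B}_0 v_{\boldsymbol\mu}(0)=0$, $\mathcal{B}_1 v_{\boldsymbol\mu}(1)=0$ inherited from the $v_j$'s) forces $\tilde{\boldsymbol\mu}=0$. Here I would invoke well-posedness of the global homogeneous problem \eqref{w}: if $\mathcal{L}v_{\boldsymbol\mu}=0$ on all of $(0,1)$ with those homogeneous boundary conditions and $v_{\boldsymbol\mu}\in H^1(0,1)$, then $v_{\boldsymbol\mu}\equiv 0$. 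This is the one place where we must assume the global Helmholtz operator with the prescribed (Sommerfeld/Robin) boundary conditions is injective on $H^1$—standard for the Sommerfeld problem and implicitly assumed throughout the paper since $w_j$ in \eqref{w} is taken to be well-defined. Granting that, $v_{\boldsymbol\mu}=0$; but $v_{\boldsymbol\mu}$ restricted to $(x_{j-1},x_{j+1})$ equals $\tilde\mu_j v_j$ plus contributions from $v_{j-1}$ and $v_{j+1}$ which vanish on $(x_{j-1},x_j)$ and $(x_j,x_{j+1})$ respectively near $x_j$—more cleanly, evaluating the coefficient of $\delta_{x_j}$ in $\mathcal{L}v_{\boldsymbol\mu}$ in the interior of $(x_{j-1},x_{j+1})$ (where only $v_j$ contributes a Dirac at $x_j$) gives $\tilde\mu_j=0$ directly, for every $j$. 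This simultaneously proves $A$ is invertible (unique solvability of \eqref{link:uniqueness}) and, taking $\boldsymbol\mu=0$, the linear independence of $\{v_1,\dots,v_{N_0-1}\}$.

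For the final claim $V=W$: since both are spanned by $N_0-1$ vectors, it suffices to show $W\subseteq V$ together with $\dim V=N_0-1$ (already established). Given $w_k$ solving \eqref{w}, apply the just-proved unique solvability with $\mu_j=\delta_{j,k}$ to obtain coefficients $\tilde\mu_j$ with $\mathcal{L}\bigl(\sum_j\tilde\mu_j v_j\bigr)=\delta_{x_k}=\mathcal{L}w_k$ on $(0,1)$; subtracting and using injectivity of the global operator (again the boundary conditions match, being homogeneous at both ends) gives $\sum_j\tilde\mu_j v_j=w_k\in V$. Thus $W\subseteq V$, and equality follows by dimension count. I would also remark that the argument shows the change-of-basis matrix between $\{v_j\}$ and $\{w_j\}$ is precisely $A^{-1}$, which is the practical content used by DAT.

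The main obstacle I anticipate is being careful about the distributional bookkeeping at the shared nodes: each interior node $x_k$ receives a genuine source $\delta_{x_k}$ from $v_k$ and spurious flux-Diracs from $v_{k-1}$ and $v_{k+1}$, and one must verify there is no additional jump contribution hidden in $\mathcal{L}$ of the zero-extended sum (i.e., that $v_{\boldsymbol\mu}\in H^1(0,1)$ so no $\delta'$ terms arise and $[a\,v_{\boldsymbol\mu}']$ has exactly the jumps recorded in \eqref{heqnL:4}). This is where the $H^1$-regularity of each $v_j$ and the homogeneous Dirichlet matching at interior endpoints is essential, and it should be stated explicitly rather than glossed over.
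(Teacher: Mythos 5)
Your reduction of \eqref{link:uniqueness} to a tridiagonal system $A\tilde{\boldsymbol\mu}=\boldsymbol\mu$ and your use of global injectivity of $\mathcal{L}$ to pass from $\mathcal{L}v_{\boldsymbol\mu}=0$ to $v_{\boldsymbol\mu}\equiv 0$ are fine, but the final step --- deducing $\tilde{\boldsymbol\mu}=0$ from $v_{\boldsymbol\mu}\equiv 0$ --- has a genuine gap, and your ``more cleanly'' shortcut is actually false. The coefficient of $\delta_{x_j}$ in $\mathcal{L}v_{\boldsymbol\mu}$ restricted to the open interval $(x_{j-1},x_{j+1})$ is \emph{not} $\tilde\mu_j$ alone: by \eqref{heqnL:4}, $v_{j-1}$ deposits its right-endpoint flux Dirac $d_{x_j}(v_{j-1})\delta_{x_j}$ and $v_{j+1}$ its left-endpoint flux Dirac $d_{x_j}(v_{j+1})\delta_{x_j}$ at that same node, so the coefficient is the full row $\tilde\mu_j+\tilde\mu_{j-1}d_{x_j}(v_{j-1})+\tilde\mu_{j+1}d_{x_j}(v_{j+1})$ --- which is exactly why your matrix $A$ is tridiagonal rather than the identity. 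Setting it to zero only reproduces $A\tilde{\boldsymbol\mu}=0$; it does not give $\tilde\mu_j=0$. Your fallback, restricting $v_{\boldsymbol\mu}\equiv 0$ to subintervals, does work by a left-to-right induction (on $(x_0,x_1)$ only $v_1$ lives, so $\tilde\mu_1 v_1|_{(x_0,x_1)}=0$, etc.), but \emph{only} if one knows that $v_j$ does not vanish identically on either $(x_{j-1},x_j)$ or $(x_j,x_{j+1})$. You neither state nor prove this, and it is the actual heart of the paper's argument: if, say, $v_j|_{(x_{j-1},x_j)}\equiv 0$, then continuity of the $H^1$ function $v_j$ forces $v_j(x_j)=0$, so $v_j|_{(x_j,x_{j+1})}$ solves the homogeneous problem with homogeneous boundary data and vanishes by uniqueness, contradicting $\mathcal{L}v_j=\delta_{x_j}\neq 0$. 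Without this lemma, linear independence of $\{v_j\}$ (and hence invertibility of $A$) is not established.

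Apart from that gap, your route is genuinely different from the paper's and would be worth comparing once repaired: the paper proves linear independence first (via the non-vanishing lemma and the support/induction argument), gets $V\subseteq W$ essentially for free since $\tilde\delta_{x_j}\in\operatorname{span}\{\delta_{x_{j-1}},\delta_{x_j},\delta_{x_{j+1}}\}$, and concludes $V=W$ and unique solvability by dimension count; you instead put the tridiagonal matrix $A$ at the center and lean on global injectivity of $\mathcal{L}$ on $H^1(0,1)$ with the prescribed boundary conditions. Your version makes the change-of-basis matrix between $\{v_j\}$ and $\{w_j\}$ explicit, which is indeed the practical content of DAT, and you are right to flag that both the local and global uniqueness assumptions are used only implicitly in the paper. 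But the non-degeneracy of $v_j$ on each half-interval cannot be bypassed in either approach.
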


\begin{proof} Since $\mathcal{L} \tilde{v}_j=\tilde{\delta}_{x_{j}}$ on $(0,1)$ and $\tilde{v}_j(0)=\tilde{v}_j(1)=0$ except $\mathcal{B}_0 \tilde{v}_1(0)=0$ and $\mathcal{B}_1 \tilde{v}_{N_0-1}(1)=0$, we obviously have $\tilde{v}_j\in W$ and hence $V\subseteq W$. We now prove that $\tilde{v}_1,\ldots,\tilde{v}_{N_0-1}$ are linearly independent. To do so, we claim that it is impossible that either $\tilde{v}_j|_{(x_{j-1},x_j)}$ or $\tilde{v}_j|_{(x_j,x_{j+1})}$ can be identically zero. Without loss of generality, we assume that $\tilde{v}_j|_{(x_{j-1},x_j)}$ is identically zero. Since $\tilde{v}_j|_{(x_{j-1},x_{j+1})}=v_j\in H^1(x_{j-1},x_{j+1})$, the function $v_j$ is continuous on $(x_{j-1},x_{j+1})$ and hence $v_j(x_j)=0$. However, since $v_j$ is the weak solution to the local linking problem in \eqref{heqnL:local:delta}, we see that $\mathring{v}_j:=v_j|_{(x_{j},x_{j+1})}$ must be the weak solution to $\mathcal{L} \mathring{v}_j(x)=0$ on $(x_j,x_{j+1})$ with the boundary conditions $\mathring{v}_j(x_j)=0$ and $\mathring{v}_j(x_{j+1})=0$ (if $j=N_0-1$, then $x_{j+1}=1$ and replace $\mathring{v}_j(x_{j+1})=0$ by $\mathcal{B}_1 \mathring{v}_{N_0-1}(1)=0$).
By the uniqueness of the solution, the weak solution $\mathring{v}_j$ must be identically zero. Hence, $v_j$ must be identically zero, which contradicts \eqref{heqnL:local:delta}. Hence, both
$v_j|_{(x_{j-1},x_j)}$ and $v_j|_{(x_j,x_{j+1})}$ cannot be identically zero; i.e., $\tilde{v}_j|_{(x_{j-1},x_j)}$ and $\tilde{v}_j|_{(x_j,x_{j+1})}$ cannot be identically zero.

Consider the linear combination $v:=\sum_{j=1}^{N_0-1} \mu_j \tilde{v}_j$ such that $v$ is identically zero. Because $\tilde{v}_j$ vanishes outside $(x_{j-1},x_{j+1})$, we have $0=v|_{(x_0,x_1)}=\mu_1 \tilde{v}_1|_{(x_0,x_1)}$. Since $\tilde{v}_1|_{(x_0,x_1)}$ cannot be identically zero, we must have $\mu_1=0$. By induction on $j$, we must have $\mu_1=\mu_2=\cdots=\mu_{N_0-1}=0$. This proves that the elements in $\{\tilde{v}_j\}_{j=1}^{N_0-1}$ must be linearly independent. Now by $V\subseteq W$, we conclude that $V=W$.
The uniqueness of the solution to the linear system in \eqref{link:uniqueness} follows straightforwardly, since $\mathcal{L} \tilde{v}_j=\tilde{\delta}_{x_j}$, $\mathcal{L} w_j=\delta_{x_j}$ and $V=W$.
\end{proof}

The following result is the main ingredient of our DAT method.

\begin{theorem}\label{thm:dat}
Define
\be \label{dat:u}
u:=u_f+u_\delta \quad \mbox{with}\quad u_f:=\sum_{j=0}^{N_0} \tilde{u}_j,\quad u_\delta:=\sum_{j=1}^{{N_0}-1} \mu_j \tilde{v}_j,
\ee
where $\tilde{u}_j$ is the weak solution to \eqref{heqnL:local:fj} with prescribed boundary conditions in \eqref{heqnl:local:fj:bdry} extended by zero, $\tilde{v}_j$ is the weak solution to \eqref{heqnL:local:delta} with the prescribed boundary conditions in \eqref{heqnl:local:dirac:bdry} extended by zero, and $\{\mu_j\}_{j=1}^{N_0-1}$ is the unique solution to the following linear system for the linking problem:
\be \label{link:coef}
\sum_{j=1}^{{N_0}-1} \mu_j \tilde{\delta}_{x_j}=-\sum_{j=1}^{N_0-1}(d_{x_j}(\tilde{u}_{j-1})+d_{x_j}(\tilde{u}_{j+1})) \dirac_{x_j},
\ee
Then
$u$ must be the weak solution to the heterogeneous Helmholtz equation in
\eqref{heqnL} with the boundary conditions in \eqref{heqnbdry}.
\end{theorem}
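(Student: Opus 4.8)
The plan is to verify directly that $u=u_f+u_\delta$ from \eqref{dat:u} belongs to $H^1(0,1)$, satisfies $\mathcal{L}u=f$ on $(0,1)$ in the distributional (equivalently, weak) sense, and fulfills the boundary conditions $\mathcal{B}_0u(0)=g_0$ and $\mathcal{B}_1u(1)=g_1$ in \eqref{heqnbdry}; once these three properties hold, $u$ is a weak solution of \eqref{heqnL}--\eqref{heqnbdry}, and by uniqueness of the weak solution it is \emph{the} solution. Recall first that the coefficients $\mu_j$ in \eqref{link:coef} exist and are unique by Theorem~\ref{thm:link} (applied with its right-hand side numbers $\mu_j$ replaced by $-(d_{x_j}(u_{j-1})+d_{x_j}(u_{j+1}))$), so $u$ is well defined.

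For the regularity, note that as explained after \eqref{dab} each $u_j$ (resp.\ $v_j$), extended by zero from $H^1(x_{j-1},x_{j+1})$ to $H^1(0,1)$, lies in $H^1(0,1)$ precisely because of the homogeneous Dirichlet conditions it satisfies at its interior endpoints, see \eqref{heqnl:local:fj:bdry} and \eqref{heqnl:local:dirac:bdry}. Hence the finite sum $u=\sum_{j=0}^{N_0}u_j+\sum_{j=1}^{N_0-1}\mu_jv_j$ lies in $H^1(0,1)$.

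The computational heart is the distributional identity for $\mathcal{L}u$. Applying $\mathcal{L}$ termwise and invoking \eqref{heqnL:3},
\[
\mathcal{L}u_f=\sum_{j=0}^{N_0}\tilde f_j=\sum_{j=0}^{N_0}f_j+\sum_{j=0}^{N_0}\Big(d_{x_{j-1}}(u_j)\,\delta_{x_{j-1}}+d_{x_{j+1}}(u_j)\,\delta_{x_{j+1}}\Big),
\]
where, by the case distinctions in \eqref{heqnL:3}, the Dirac terms sitting at $x_{-1}=x_0=0$ and at $x_{N_0}=x_{N_0+1}=1$ are absent. Since $\sum_{j=0}^{N_0}f_j=f$, and re-indexing ($k=j-1$ in the first Dirac sum, $k=j+1$ in the second) shows that the surviving masses occur exactly at the interior nodes $x_1,\dots,x_{N_0-1}$, this collapses to
\[
\mathcal{L}u_f=f+\sum_{k=1}^{N_0-1}\big(d_{x_k}(u_{k-1})+d_{x_k}(u_{k+1})\big)\,\delta_{x_k}.
\]
On the other hand, $\mathcal{L}u_\delta=\sum_{j=1}^{N_0-1}\mu_j\,\mathcal{L}v_j=\sum_{j=1}^{N_0-1}\mu_j\,\tilde\delta_{x_j}$ by \eqref{heqnL:4}, which by the linking equation \eqref{link:coef} equals $-\sum_{k=1}^{N_0-1}(d_{x_k}(u_{k-1})+d_{x_k}(u_{k+1}))\,\delta_{x_k}$; the $j=N_0$ summand in \eqref{link:coef} is supported at the boundary point $x_{N_0}=1$, hence irrelevant on $(0,1)$, with the convention $u_{N_0+1}\equiv0$. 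Adding the two displays, every interior Dirac mass cancels and $\mathcal{L}u=f$ on $(0,1)$. For the boundary conditions I would then use that near $x=0$ only $u_0,u_1,v_1$ are supported while all other $u_j,v_j$ vanish on a neighborhood of $0$, so by linearity of $\mathcal{B}_0$ and the prescribed values $\mathcal{B}_0u_0(0)=g_0$, $\mathcal{B}_0u_1(0)=0$, $\mathcal{B}_0v_1(0)=0$ from \eqref{heqnl:local:fj:bdry}--\eqref{heqnl:local:dirac:bdry} one gets $\mathcal{B}_0u(0)=\mathcal{B}_0u_0(0)+\mathcal{B}_0u_1(0)+\mu_1\mathcal{B}_0v_1(0)=g_0$; symmetrically, using $u_{N_0},u_{N_0-1},v_{N_0-1}$, one gets $\mathcal{B}_1u(1)=\mathcal{B}_1u_{N_0}(1)+\mathcal{B}_1u_{N_0-1}(1)+\mu_{N_0-1}\mathcal{B}_1v_{N_0-1}(1)=g_1$.

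The only delicate point I anticipate is the bookkeeping of the Dirac masses across the overlapping supports in the two sums above: one must check carefully that the contributions attached to the endpoint nodes $x_0=0$ and $x_{N_0}=1$ genuinely drop out---so that no spurious boundary masses survive---and that the re-indexed interior sums line up term for term with the right-hand side of \eqref{link:coef}. Everything else is linearity of $\mathcal{L}$ and of $\mathcal{B}_0,\mathcal{B}_1$ combined with the already-established local structure \eqref{heqnL:3}--\eqref{heqnL:4}.
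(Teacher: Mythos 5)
Your proposal is correct and follows essentially the same route as the paper: expand $\mathcal{L}u_f=\sum_j\tilde f_j$ via \eqref{heqnL:3}, re-index the flux Dirac masses to the interior nodes, and use the linking system \eqref{link:coef} (solvable uniquely by Theorem~\ref{thm:link}) so that $\mathcal{L}u_\delta$ cancels them, with the boundary conditions following from the local supports. Your extra care about the $j=N_0$ summand in \eqref{link:coef} (which is vacuous, as the flux functionals $d_{x_{N_0}}$ are not defined at $\beta=1$) and the explicit $H^1$ zero-extension argument are fine and consistent with the paper's conventions.
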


\begin{proof}
Since $\sum_{j=0}^{N_0} f_j=f$, we can write
\be \label{f:tf}
f(x)=\sum_{j=0}^{N_0} f_j(x)=
\sum_{j=0}^{N_0} \tilde{f}_j(x)-
\sum_{j=1}^{{N_0}-1} (d_{x_j}(\tilde{u}_{j-1})+d_{x_j}(\tilde{u}_{j+1})) \delta_{x_j}.
\ee
By \cref{thm:link}, there is a unique solution $\{\mu_j\}_{j=1}^{N_0-1}$ to \eqref{link:coef}. That is, the linking problem in \eqref{link:coef} can be uniquely solved.
Hence, using \eqref{f:tf}, we can further write
\be \label{f:tf:2}
f(x)=
\sum_{j=0}^{N_0} \tilde{f}_j(x)+
\sum_{j=1}^{{N_0}-1} \mu_j \tilde{\delta}_{x_j}.
\ee
By the definition of $\tilde{u}_j$, we observe that
$\mathcal{L}u_f (x)=\sum_{j=0}^{N_0}\tilde{f}_j(x)$ for $x\in (0,1)$ with the boundary conditions $\mathcal{B}_0 u_f(0)=g_0$ and $\mathcal{B}_1 u_f(1)=g_1$.
On the other hand, by the definition of $\tilde{\delta}_{x_j}$, we have
\[
\mathcal{L}u_\delta (x)=\sum_{j=1}^{N_0-1}\mu_j \tilde{\delta}_{x_j},\qquad x\in (0,1)
\]
and $u_\delta$ satisfies the boundary conditions $\mathcal{B}_0 u_\delta(0)=0$ and $\mathcal{B}_1 u_\delta(1)=0$.
Thus, by \eqref{f:tf:2} we have
\[
\mathcal{L} u=\mathcal{L} u_f+\mathcal{L}u_\delta=\sum_{j=0}^{N_0} \tilde{f}_j(x)+
\sum_{j=1}^{N_0-1} \mu_j \tilde{\delta}_{x_j}=f(x),\qquad x\in (0,1)
\]
and $u$ satisfies the prescribed boundary conditions in \eqref{heqnbdry}.
\end{proof}

Obviously, we can recursively apply the above procedure to solve each of the local problems in \eqref{heqnL:local:fj} with prescribed boundary conditions in \eqref{heqnl:local:fj:bdry} to further reduce the size of the problem. To elucidate this point, we now present the DAT algorithm below.

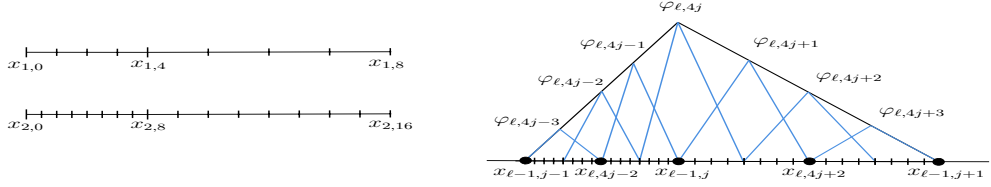
\begin{figure}[htbp]
	 \resizebox{0.7\textwidth}{1.25cm}{%
	 \begin{minipage}{0.45\textwidth}
	\tikzset{every picture/.style={line width=0.75pt}} 
	 \begin{tikzpicture}[x=0.75pt,y=0.75pt,yscale=-1,xscale=1]
	\draw    (59.9,130.6) -- (139.83,130.67) (79.9,126.62) -- (79.9,134.62)(99.9,126.63) -- (99.9,134.63)(119.9,126.65) -- (119.9,134.65) ;
	\draw [shift={(139.83,130.67)}, rotate = 180.05] [color={rgb, 255:red, 0; green, 0; blue, 0 }  ][line width=0.75]    (0,5.59) -- (0,-5.59)   ;
	\draw [shift={(59.9,130.6)}, rotate = 180.05] [color={rgb, 255:red, 0; green, 0; blue, 0 }  ][line width=0.75]    (0,5.59) -- (0,-5.59)   ;
	\draw    (139.83,130.67) -- (300.5,130) (179.82,126.5) -- (179.85,134.5)(219.82,126.33) -- (219.85,134.33)(259.82,126.17) -- (259.85,134.17)(299.82,126) -- (299.85,134) ;
	\draw    (59.9,199.6) -- (139.83,199.67) (69.9,195.61) -- (69.9,203.61)(79.9,195.62) -- (79.9,203.62)(89.9,195.63) -- (89.9,203.63)(99.9,195.63) -- (99.9,203.63)(109.9,195.64) -- (109.9,203.64)(119.9,195.65) -- (119.9,203.65)(129.9,195.66) -- (129.9,203.66) ;
	\draw [shift={(139.83,199.67)}, rotate = 180.05] [color={rgb, 255:red, 0; green, 0; blue, 0 }  ][line width=0.75]    (0,5.59) -- (0,-5.59)   ;
	\draw [shift={(59.9,199.6)}, rotate = 180.05] [color={rgb, 255:red, 0; green, 0; blue, 0 }  ][line width=0.75]    (0,5.59) -- (0,-5.59)   ;
	\draw    (139.83,199.67) -- (300.5,199) (159.82,195.58) -- (159.85,203.58)(179.82,195.5) -- (179.85,203.5)(199.82,195.42) -- (199.85,203.42)(219.82,195.33) -- (219.85,203.33)(239.82,195.25) -- (239.85,203.25)(259.82,195.17) -- (259.85,203.17)(279.82,195.09) -- (279.85,203.09)(299.82,195) -- (299.85,203) ;
	
	\draw (46,139.4) node [anchor=north west][inner sep=0.75pt]    {$x_{1,0}$};
	\draw (284,137.4) node [anchor=north west][inner sep=0.75pt]    {$x_{1,8}$};
	\draw (127,138.4) node [anchor=north west][inner sep=0.75pt]    {$x_{1,4}$};
	\draw (46,206.4) node [anchor=north west][inner sep=0.75pt]    {$x_{2,0}$};
	\draw (127,206.4) node [anchor=north west][inner sep=0.75pt]    {$x_{2,8}$};
	\draw (284,205.4) node [anchor=north west][inner sep=0.75pt]    {$x_{2,16}$};
	\end{tikzpicture}
	\end{minipage}
	 \begin{minipage}{0.45\textwidth}
	\tikzset{every picture/.style={line width=0.75pt}} 
	 \begin{tikzpicture}[x=0.75pt,y=0.75pt,yscale=-1,xscale=1,scale=0.9]
	\draw    (250,212.2) -- (477.5,212.68) (298.01,208.3) -- (297.99,216.3)(346.01,208.4) -- (345.99,216.4)(394.01,208.5) -- (393.99,216.5)(442.01,208.6) -- (441.99,216.6) ;
	\draw    (109.5,211.26) -- (250,212.2) (137.53,207.44) -- (137.47,215.44)(165.53,207.63) -- (165.47,215.63)(193.53,207.82) -- (193.47,215.82)(221.52,208.01) -- (221.47,216.01)(249.52,208.2) -- (249.47,216.2) ;
	\draw    (138,210.78) -- (249.5,40.6) ;
	\draw    (249.5,40.6) -- (440.5,212.68) ;
	\draw [color={rgb, 255:red, 74; green, 144; blue, 226 }  ,draw opacity=1 ]   (249.5,40.6) -- (297.5,212.68) ;
	\draw [color={rgb, 255:red, 74; green, 144; blue, 226 }  ,draw opacity=1 ]   (249.5,40.6) -- (221.5,211.26) ;
	\draw [color={rgb, 255:red, 74; green, 144; blue, 226 }  ,draw opacity=1 ]   (217,91.33) -- (250,212.2) ;
	\draw [color={rgb, 255:red, 74; green, 144; blue, 226 }  ,draw opacity=1 ]   (217,91.33) -- (193.5,214.1) ;
	\draw [color={rgb, 255:red, 74; green, 144; blue, 226 }  ,draw opacity=1 ]   (193.75,125.69) -- (221.5,211.26) ;
	\draw [color={rgb, 255:red, 74; green, 144; blue, 226 }  ,draw opacity=1 ]   (193.75,125.69) -- (165.5,212.68) ;
	\draw [color={rgb, 255:red, 74; green, 144; blue, 226 }  ,draw opacity=1 ]   (163,172.38) -- (193.5,212.1) ;
	\draw [color={rgb, 255:red, 74; green, 144; blue, 226 }  ,draw opacity=1 ]   (163,172.38) -- (138,210.78) ;
	\draw [color={rgb, 255:red, 74; green, 144; blue, 226 }  ,draw opacity=1 ]   (301.5,87.53) -- (250,212.2) ;
	\draw [color={rgb, 255:red, 74; green, 144; blue, 226 }  ,draw opacity=1 ]   (301.5,87.53) -- (345.5,212.68) ;
	\draw [color={rgb, 255:red, 74; green, 144; blue, 226 }  ,draw opacity=1 ]   (345,126.64) -- (297.5,212.68) ;
	\draw [color={rgb, 255:red, 74; green, 144; blue, 226 }  ,draw opacity=1 ]   (345,126.64) -- (393.5,211.26) ;
	\draw [color={rgb, 255:red, 74; green, 144; blue, 226 }  ,draw opacity=1 ]   (391,168.12) -- (345.5,212.68) ;
	\draw [color={rgb, 255:red, 74; green, 144; blue, 226 }  ,draw opacity=1 ]   (391,168.12) -- (440.5,212.68) ;
	\draw    (250,212.2) -- (440.5,212.68) (262.01,208.23) -- (261.99,216.23)(274.01,208.26) -- (273.99,216.26)(286.01,208.29) -- (285.99,216.29)(298.01,208.32) -- (297.99,216.32)(310.01,208.35) -- (309.99,216.35)(322.01,208.38) -- (321.99,216.38)(334.01,208.41) -- (333.99,216.41)(346.01,208.44) -- (345.99,216.44)(358.01,208.47) -- (357.99,216.47)(370.01,208.5) -- (369.99,216.5)(382.01,208.53) -- (381.99,216.53)(394.01,208.56) -- (393.99,216.56)(406.01,208.59) -- (405.99,216.59)(418.01,208.62) -- (417.99,216.62)(430.01,208.65) -- (429.99,216.65) ;
	\draw    (130.5,211.73) -- (250,212.2) (137.52,207.76) -- (137.48,215.76)(144.52,207.78) -- (144.48,215.78)(151.52,207.81) -- (151.48,215.81)(158.52,207.84) -- (158.48,215.84)(165.52,207.87) -- (165.48,215.87)(172.52,207.9) -- (172.48,215.9)(179.52,207.92) -- (179.48,215.92)(186.52,207.95) -- (186.48,215.95)(193.52,207.98) -- (193.48,215.98)(200.52,208.01) -- (200.48,216.01)(207.52,208.03) -- (207.48,216.03)(214.52,208.06) -- (214.48,216.06)(221.52,208.09) -- (221.48,216.09)(228.52,208.12) -- (228.48,216.12)(235.52,208.15) -- (235.48,216.15)(242.51,208.17) -- (242.48,216.17)(249.51,208.2) -- (249.48,216.2) ;
	\draw    (136,95)  ;
	\draw  [fill={rgb, 255:red, 0; green, 0; blue, 0 }  ,fill opacity=1 ] (246,212.19) .. controls (246,209.89) and (247.8,208.03) .. (250.01,208.04) .. controls (252.22,208.05) and (254.01,209.91) .. (254,212.21) .. controls (254,214.51) and (252.2,216.37) .. (249.99,216.37) .. controls (247.78,216.36) and (245.99,214.49) .. (246,212.19) -- cycle ;
	\draw  [fill={rgb, 255:red, 0; green, 0; blue, 0 }  ,fill opacity=1 ] (342,212.19) .. controls (342,209.89) and (343.8,208.03) .. (346.01,208.04) .. controls (348.22,208.05) and (350.01,209.91) .. (350,212.21) .. controls (350,214.51) and (348.2,216.37) .. (345.99,216.37) .. controls (343.78,216.36) and (341.99,214.49) .. (342,212.19) -- cycle ;
	\draw  [fill={rgb, 255:red, 0; green, 0; blue, 0 }  ,fill opacity=1 ] (436.48,212.83) .. controls (436.49,210.53) and (438.29,208.67) .. (440.5,208.68) .. controls (442.71,208.68) and (444.5,210.55) .. (444.49,212.85) .. controls (444.49,215.15) and (442.69,217.01) .. (440.48,217) .. controls (438.27,217) and (436.48,215.13) .. (436.48,212.83) -- cycle ;
	\draw  [fill={rgb, 255:red, 0; green, 0; blue, 0 }  ,fill opacity=1 ] (189,212.19) .. controls (189,209.89) and (190.8,208.03) .. (193.01,208.04) .. controls (195.22,208.05) and (197.01,209.91) .. (197,212.21) .. controls (197,214.51) and (195.2,216.37) .. (192.99,216.37) .. controls (190.78,216.36) and (188.99,214.49) .. (189,212.19) -- cycle ;
	\draw  [fill={rgb, 255:red, 0; green, 0; blue, 0 }  ,fill opacity=1 ] (134,211.19) .. controls (134,208.89) and (135.8,207.03) .. (138.01,207.04) .. controls (140.22,207.05) and (142.01,208.91) .. (142,211.21) .. controls (142,213.51) and (140.2,215.37) .. (137.99,215.37) .. controls (135.78,215.36) and (133.99,213.49) .. (134,211.19) -- cycle ;
	
	\draw (231,219.67) node [anchor=north west][inner sep=0.75pt]    {$x_{\ell -1,j}$};
	\draw (416,220.09) node [anchor=north west][inner sep=0.75pt]    {$x_{\ell -1,j+1}$};
	\draw (112.5,220.14) node [anchor=north west][inner sep=0.75pt]    {$x_{\ell -1,j-1}$};
	\draw (234,13.04) node [anchor=north west][inner sep=0.75pt]    {$\varphi _{\ell ,4j}$};
	\draw (176,59.97) node [anchor=north west][inner sep=0.75pt]    {$\varphi _{\ell ,4j-1}$};
	\draw (145,105.48) node [anchor=north west][inner sep=0.75pt]    {$\varphi _{\ell ,4j-2}$};
	\draw (113,155.25) node [anchor=north west][inner sep=0.75pt]    {$\varphi _{\ell ,4j-3}$};
	\draw (303,58.55) node [anchor=north west][inner sep=0.75pt]    {$\varphi _{\ell ,4j+1}$};
	\draw (349,101.21) node [anchor=north west][inner sep=0.75pt]    {$\varphi _{\ell ,4j+2}$};
	\draw (395,143.88) node [anchor=north west][inner sep=0.75pt]    {$\varphi _{\ell ,4j+3}$};
	\draw (324,220.09) node [anchor=north west][inner sep=0.75pt]    {$x_{\ell ,4j+2}$};
	\draw (172,219.67) node [anchor=north west][inner sep=0.75pt]    {$x_{\ell ,4j-2}$};
	\end{tikzpicture}
	\end{minipage}
	}
	\caption{Left: An initial partition of $[0,1]$ at $\ell=1$ with $N_0=8$ and its subsequent refinement at $\ell=2$ with $s=1$. Right: The relationship between an interior large hat function $\varphi_{\ell-1,j}$ on $[x_{\ell-1,j-1},x_{\ell-1,j+1}]$ and smaller hat functions $\varphi_{\ell,2^{s}j+k}$, $-2^{s}+1\le k \le 2^{s}-1$, with $s=2$.}
	\label{fig:DAT}
\end{figure}

\begin{algorithm}\label{alg:dat}
	\normalfont
	Consider the $1$-level partition $\{x_{1,j}\}_{j=0}^{N_0}$ given by $0=x_{1,-1}=x_{1,0}<x_{1,1}<\cdots<x_{1,N_0-1}<x_{1,N_0}=x_{1,N_0+1}=1$, and let $\{\varphi_{1,j}\}_{j=0}^{N_0}$ be the associated partition of unity such that $\supp(\varphi_{1,j}) \subseteq [x_{1,j-1},x_{1,j+1}]$ with $\varphi_{1,j}(x_{1,j})=1$. Pick $L,s\in \N$ such that $L$ is the tree level and any subinterval at a level is divided equally into $2^s$ small subintervals at the next level. For each tree level $\ell=2,\ldots,L$, let $\{x_{\ell,j}\}_{j=0}^{2^{\ell s} N_0}$ be a refinement partition of the grid $\{x_{\ell-1,j}\}_{j=0}^{2^{(\ell-1)s} N_0}$ such that $\{x_{\ell-1,j}\}_{j=0}^{2^{(\ell-1)s}N_0} \subset \{x_{\ell,j}\}_{j=0}^{2^{\ell s} N_0}$ with
	 $x_{\ell-1,j}=x_{\ell,2^{s}j}$, $x_{\ell,-1}:=x_{\ell,0}=0$, and $x_{\ell,2^{\ell s} N_0+1}:=x_{\ell,2^{\ell s} N_0}=1$.
Fix $N \in \mathbb{N}$ such that $N$ is the total number of points on the finest grid in the tree and $\{x_{L,j}\}_{j=0}^{2^{Ls}N_0} \subset \{x_{j}\}_{j=0}^{N}$. The local problems will be solved on this fixed fine grid. Note that $\supp(\varphi_{\ell,j}) \subseteq [x_{\ell,j-1},x_{\ell,j+1}]$ with $\varphi_{\ell,j}(x_{\ell,j})=1$ for $0\le j \le 2^{(\ell-1) s}N_0$ and
	\begin{align*}	
	 \varphi_{\ell-1,0}=\sum_{k=0}^{2^{s}-1} \varphi_{\ell,k}, \quad \varphi_{\ell-1,2^{(\ell-2)s}N_0}=\sum_{k=-2^{s}+1}^{0} \varphi_{\ell,2^{(\ell-1) s}N_0+k}, \quad
	 \varphi_{\ell-1,j}=\sum_{k=-2^{s}+1}^{2^{s}-1} \varphi_{\ell,2^{s}j+k}
	\end{align*}
	for $j=1, \dots, 2^{(\ell-2)s}N_0-1$. See \cref{fig:DAT} for an illustration of the setting above.

	\begin{itemize}
		\item[(S1)]
		Solve the following (regular and Dirac assisted) local problems at tree level $L$ in a parallel fashion using any chosen discretization method.
		\begin{align}
			& \begin{cases}	 &\mathcal{L}{u_{L,j}}=f_{j}=:f\varphi_{L,j}, \qquad x \in [x_{L,j-1},x_{L,j+1}], \qquad j=0,\dots, 2^{(L-1)s}N_0,\\
				 &(\mathcal{B}_0 u_{L,j}(0)-g_0\delta_{0,j})\delta_{0,x_{L,j-1}}+
				 (1-\delta_{0,x_{L,j-1}})u_{L,j}(x_{L,j-1})=0,\\
				 &(\mathcal{B}_1 u_{L,j}(1)-g_1\delta_{2^{Ls}N_0,j})\delta_{1,x_{L,j+1}}+
				 (1-\delta_{1,x_{L,j+1}})u_{L,j}(x_{L,j+1})=0,
			\end{cases} \label{localproblem:regular}\\
			&	 \begin{cases} &\mathcal{L}{v_{L,j}}=\delta_{x_{L,j}}, \qquad x \in [x_{L,j-1},x_{L,j+1}], \qquad j=1,\dots, 2^{(L-1)s}N_0-1,\\
				 &\mathcal{B}_0 v_{L,j}(0)\delta_{0,x_{L,j-1}}+
				 (1-\delta_{0,x_{L,j-1}})v_{L,j}(x_{L,j-1})=0,\\
				 &\mathcal{B}_1 v_{L,j}(1)\delta_{1,x_{L,j+1}}+
				 (1-\delta_{1,x_{L,j+1}})v_{L,j}(x_{L,j+1})=0. \end{cases}\label{localproblem:dirac}
		\end{align}
		For all $j=0,\dots, 2^{(L-1)s}N_0$, extend $u_{L,j}$ by zero outside of $[x_{L,j-1},x_{L,j+1}]$ and denote it by $\tilde{u}_{L,j}$. Similarly, for all $j=1,\dots, 2^{(L-1)s}N_0-1$, extend $v_{L,j}$ by zero outside of $[x_{L,j-1},x_{L,j+1}]$ and denote it by $\tilde{v}_{L,j}$.
		
		\item[(S2)] Let $\ell=L,\ldots,2$ decreasingly. Consider the artificial Dirac distributions at each grid point and find the appropriate linear combination of Dirac local problems to offset it. This allows us to recover the solutions to local problems at level $\ell-1$ from those at level $\ell$. More explicitly, define an $n_{2} \times n_{2}$ tridiagonal matrix and an $n_{2}$ column vector as follows
		\begin{align*}
			 T_{\ell,n_{1},n_{2}} & :=\text{tridiag}(
			 \{d_{x_{\ell,n_{1}+m}}(\tilde{v}_{\ell,n_{1}+m-1})\}_{m=2}^{n_{2}},
			 \{1\}_{m=1}^{n_2},
			 \{d_{x_{\ell,n_{1}+m}}(\tilde{v}_{\ell,n_{1}+m+1})\}_{m=1}^{n_2-1}
			),\\
			 \gamma_{\ell,n_{1},n_{2}} & :=  [-d_{x_{\ell,n_{1}+m}}(\tilde{u}_{\ell,n_{1}+m-1})(1-\dirac_{1,m}(1-\dirac_{0,n_1}(1-\dirac_{1,\ell})))\\
			& \qquad \qquad -d_{x_{\ell,n_{1}+m}}(\tilde{u}_{\ell,n_{1}+m+1})(1-\dirac_{n_2,m}(1-\dirac_{2^{(\ell-1)s}N_0-2^{s},n_1}(1-\dirac_{1,\ell})))]_{1\le m \le n_2},
		\end{align*}
		where $n_{1}, n_{2} \in \mathbb{N} \cup \{0\}$, and the first, second, and third arguments of $\text{tridiag}(\cdot,\cdot,\cdot)$ correspond to the entries in the lower, main, and upper diagonals. Given a column vector $\mu$, we denote the $k$th component of $\mu$ by $(\mu)_k$. For each $\ell$, solve the linking problems obtained from steps (a)-(d) below in a parallel fashion.
		\begin{enumerate}
			\item[(a)] (Left-most element of partition of unity) Construct a $(2^{s}-1) \times (2^{s}-1)$ tridiagonal matrix $T_{\ell,0,2^s-1}$ and a $(2^{s}-1)$ column vector $\gamma_{\ell,0,2^s-1}$. Set $\mu_{\ell,0} = (T_{\ell,0,2^s-1})^{-1}\gamma_{\ell,0,2^s-1}$.
			\item[(b)] (Interior elements of partition of unity) For all $j=1,\dots,2^{(\ell-2)s}N_0-1$, construct a $(2^{s+1}-1) \times (2^{s+1}-1)$ tridiagonal matrix $T_{\ell,2^{s}(j-1),2^{s+1}-1}$ and a $(2^{s+1}-1)$ column vector $\gamma_{\ell,2^{s}(j-1),2^{s+1}-1}$. Let $e_{2^{s}}$ is a $(2^{s+1}-1)$ vector with $1$ in the $2^{s}$th entry and $0$ in the remaining entries. Set
			 $\mu_{\ell,j} = (T_{\ell,2^{s}(j-1),2^{s+1}-1})^{-1}\gamma_{\ell,2^{s}(j-1),2^{s+1}-1}$ and $\nu_{\ell,j}=(T_{\ell,2^{s}(j-1),2^{s+1}-1})^{-1} e_{2^{s}}$.
			\item[(c)] (Right-most element of partition of unity) Construct a $(2^{s}-1) \times (2^{s}-1)$ tridiagonal matrix $T_{\ell,2^{(\ell-1) s}N_0-2^{s},2^s-1}$ and a $(2^{s}-1)$ column vector $\gamma_{\ell,2^{(\ell-1) s}N_0-2^{s},2^s-1}$. Set $\mu_{\ell,2^{(\ell-2)s}N_0} = (T_{\ell,2^{(\ell-1) s}N_0-2^{s},2^s-1})^{-1}\gamma_{\ell,2^{(\ell-1) s}N_0-2^{s},2^s-1}$.
			\item[(d)] (Construct the solutions to local problems at level $\ell-1$)
			For $j=0,2^{(\ell-2)s}N_0$ (left-most and right-most elements respectively), set
			 \begin{align*}
				& \tilde{u}_{\ell-1,0} = \sum_{k=0}^{2^{s}-1} \tilde{u}_{\ell,k} + \sum_{k=1}^{2^{s}-1} (\mu_{\ell,0})_{k} \tilde{v}_{\ell,k}, \\
				& \tilde{u}_{\ell-1,2^{(\ell-2)s}N_0} = \sum_{k=-2^{s}+1}^{0} \tilde{u}_{\ell,2^{(\ell-1) s}N_0+k} + \sum_{k=1}^{2^{s}-1} (\mu_{\ell,2^{(\ell-2)s}N_0})_{k} \tilde{v}_{\ell,2^{(\ell-1) s}N_0-2^{s}+k}.
			 \end{align*}
			For $j=1, \dots, 2^{(\ell-2)s}N_0-1$ (interior elements), set
			\[
			\qquad \qquad \qquad \tilde{u}_{\ell-1,j} = \sum_{k=-2^{s}+1}^{2^{s}-1} \tilde{u}_{\ell,2^{s}j+k} + \sum_{k=1}^{2^{s+1}-1} (\mu_{\ell,j})_{k} \tilde{v}_{\ell,2^{s}j-2^s+k}, \qquad
			 \tilde{v}_{\ell-1,j} = \sum_{k=1}^{2^{s+1}-1} (\nu_{\ell,j})_{k} \tilde{v}_{\ell,2^{s}j-2^s+k}.
			\]
		\end{enumerate}
		\item[(S3)]
		Construct an $(N_0-1) \times (N_0-1)$ tridiagonal matrix $T_{1,0,N_0-1}$ and an $(N_0-1)$ column vector $\gamma_{1,0,N_0-1}$. Set $\mu_{1,0}=(T_{1,0,N_0-1})^{-1} \gamma_{1,0,N_0-1}$.
		Finally, the approximated solution of the problem  \eqref{heqnL}-\eqref{heqnbdry} is given by
$u=\sum_{k=0}^{N_0} \tilde{u}_{1,k} + \sum_{k=1}^{N_0-1} (\mu_{1,0})_{k} \tilde{v}_{0,k}$.
	\end{itemize}
\end{algorithm}

As an illustrative example, for an equispaced grid on $[0,1]$ with $N_0=4$, $h=2^{-n}$, $L=n-2$, $s=1$ and $n\in \N$, the size of each linking and local problem (with the exception of those near the boundaries) is a $3 \times 3$ matrix equation. This exactly describes the situation in \cref{ex:constant,ex:akvary} in \cref{sec:examples}.

Thus far, we have described DAT for the linear differential operator $\mathcal{L}$ defined in \eqref{heqnL}. The DAT method with appropriate modification can be generalized to general 1D linear differential operators $\mathcal{L}$ (e.g., the biharmonic equation involving higher order derivatives). We would need to modify \cref{thm:dat} about how we patch the local problems in \eqref{heqnL:local} by means of the Dirac assisted local problems in \eqref{heqnL:local:delta} equipped with suitable boundary conditions. In addition to \eqref{heqnL:local:delta}, we may have to solve additional Dirac assisted linking problems in \eqref{heqnL:local:delta} using higher order distributional derivatives of $\dirac_{x_j}$.

As it currently stands, DAT can handle multidimensional problems that can be decomposed into a series of 1D problems (e.g., by the separation of variables). We shall provide a few relevant 2D numerical examples in \cref{subsec:2D}. Generalizing DAT for the purpose of solving general 2D/3D problems is a challenging multifaceted problem and demands ingenious ideas. There are two interconnected critical issues that need to be properly resolved. For the sake of discussion, let us restrict ourselves to 2D. The source term would still be partitioned by shifted square hat functions and their refinability can still be used to give rise to the tree structure. The first issue comes from the 2D Dirac assisted local problems.
In contrast to the 1D Dirac assisted local problem whose source term is a Dirac distribution at a single point, the 2D Dirac assisted local problems would have the source term consisting of weighted Dirac distributions defined along the boundary of a rectangular subdomain. Obtaining a highly accurate numerical solution of such 2D Dirac assisted local problem and accurately estimating its outward fluxes are challenging, because its weak solution involves highly singular functions caused by the singular distribution source term. The second issue is about how to formulate the linking problems and generalize \cref{thm:dat} for stitching all the local solutions into a global solution appropriately. This second issue in multiple dimensions is considerably more difficult than in 1D, due to more complicated topology and boundaries of multidimensional subdomains.

\section{Compact Finite Difference Schemes with Arbitrarily High Accuracy Orders}\label{sec:fdm}

To numerically solve the heterogeneous Helmholtz equation in \eqref{heqnL}--\eqref{heqnbdry} with piecewise smooth coefficients $a,\kappa^2$ and source term $f$, in this section we shall study compact finite difference schemes with arbitrarily high accuracy and numerical dispersion orders. Such compact finite difference schemes are important for accurately solving local problems stemming from DAT in the foregoing section.

\subsection{Compact stencils for interior points} \label{subsec:fdm:interior}

We start by stating a simple observation, which is critical for proving the existence of a 1D finite difference scheme with arbitrarily high accuracy order. The following observation uses an analyticity assumption for its theoretical analysis; however, we only require the coefficients to be differentiable up to a certain order as we shall see later in this section.

\begin{prop} \label{prop:span}
Let $a, \kappa^{2}, f$ in \eqref{heqnL} be analytic functions
and let $u$ be an analytic function satisfying $[a(x)u'(x)]'+\kappa^2(x) u(x)=f(x)$ with $a(x)>0$ for all $x\in (0,1)$.
For any point $\xb\in (0,1)$, we have
\be \label{uj}
u^{(j)}(\xb)=E_{j,0}u(\xb) + E_{j,1}u'(\xb) + \sum_{\ell=0}^{j-2} F_{j,\ell} f^{(\ell)}(\xb), \qquad j\ge 2,
\ee
where the quantities $E_{j,0}, E_{j,1}, F_{j,\ell}$ only depend on the values $a(\xb), a'(\xb),\ldots,a^{(j-1)}(\xb)$ and $\kappa^2(\xb)$, $[\kappa^2]'(\xb)$,$\ldots,[\kappa^{2}]^{(j-2)}(\xb)$ for $j\ge 2$ and $\ell\in \NN$. Consequently, for sufficiently small $h$,
\be \label{utay}
u(\xb+h)=
u(\xb) E_0(h) +u'(\xb) h E_1(h) +
\sum_{\ell=0}^\infty h^{\ell+2} f^{(\ell)}(\xb)F_\ell(h),
\ee
where $E_0(h), E_1(h)$ and $F_\ell(h), \ell\in \NN$ are defined to be
\be \label{E12}
E_0(h):=1+\sum_{j=2}^\infty \frac{E_{j,0}}{j!} h^j,\quad
E_1(h):=1+\sum_{j=2}^\infty \frac{E_{j,1}}{j!}h^{j-1},\quad F_\ell(h):=\sum_{j=\ell+2}^\infty \frac{F_{j,\ell}}{j!} h^{j-\ell-2}.
\ee
\end{prop}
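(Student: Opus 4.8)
The plan is to prove a functional refinement of \eqref{uj} by induction on $j$ and then obtain \eqref{utay} by inserting it into the Taylor series of $u$ at $\xb$. Precisely, I would show that for every integer $j\ge 2$ there exist functions $\mathcal{E}_{j,0},\mathcal{E}_{j,1},\mathcal{F}_{j,0},\dots,\mathcal{F}_{j,j-2}$ on $(0,1)$, each built from $a,\kappa^2$, their derivatives and $1/a$ by finitely many additions, products and differentiations (hence smooth, since $a$ is smooth and bounded away from $0$), such that
\[
u^{(j)}(x)=\mathcal{E}_{j,0}(x)\,u(x)+\mathcal{E}_{j,1}(x)\,u'(x)+\sum_{\ell=0}^{j-2}\mathcal{F}_{j,\ell}(x)\,f^{(\ell)}(x),\qquad x\in(0,1).
\]
The quantities in \eqref{uj} are then recovered by evaluation: $E_{j,0}=\mathcal{E}_{j,0}(\xb)$, $E_{j,1}=\mathcal{E}_{j,1}(\xb)$, $F_{j,\ell}=\mathcal{F}_{j,\ell}(\xb)$.

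For the base case I would rewrite \eqref{heqnL} as $a u''+a'u'+\kappa^2 u=f$ and use $a>0$ to get $u''=-\tfrac{\kappa^2}{a}u-\tfrac{a'}{a}u'+\tfrac{1}{a}f$, i.e.\ $\mathcal{E}_{2,0}=-\kappa^2/a$, $\mathcal{E}_{2,1}=-a'/a$, $\mathcal{F}_{2,0}=1/a$. For the inductive step I would differentiate the $j$-th identity and replace the resulting $\mathcal{E}_{j,1}u''$ by the expression for $u''$ just displayed; collecting the coefficients of $u$, $u'$ and of each $f^{(\ell)}$ gives the recursions $\mathcal{E}_{j+1,0}=\mathcal{E}_{j,0}'-\tfrac{\kappa^2}{a}\mathcal{E}_{j,1}$, $\mathcal{E}_{j+1,1}=\mathcal{E}_{j,0}+\mathcal{E}_{j,1}'-\tfrac{a'}{a}\mathcal{E}_{j,1}$, together with $\mathcal{F}_{j+1,0}=\mathcal{F}_{j,0}'+\tfrac{1}{a}\mathcal{E}_{j,1}$, $\mathcal{F}_{j+1,\ell}=\mathcal{F}_{j,\ell}'+\mathcal{F}_{j,\ell-1}$ for $1\le\ell\le j-2$, and $\mathcal{F}_{j+1,j-1}=\mathcal{F}_{j,j-2}$. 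Since $j-1=(j+1)-2$, the sum still terminates at the correct index, which closes the induction.

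The dependence statement follows by a parallel induction read off the same recursions: assuming $\mathcal{E}_{j,0},\mathcal{E}_{j,1},\mathcal{F}_{j,\ell}$ involve only $a,a',\dots,a^{(j-1)}$ and $\kappa^2,[\kappa^2]',\dots,[\kappa^2]^{(j-2)}$, each differentiation appearing in the recursions raises the top derivative order by exactly one while the products introduce nothing new, so the level-$(j+1)$ coefficients involve only $a,\dots,a^{(j)}$ and $\kappa^2,\dots,[\kappa^2]^{(j-1)}$; evaluating at $\xb$ yields the claimed dependence of $E_{j,0},E_{j,1},F_{j,\ell}$. Finally, \eqref{utay} is obtained by inserting \eqref{uj} into $u(\xb+h)=\sum_{j\ge0}\frac{h^j}{j!}u^{(j)}(\xb)$, peeling off the $j=0$ and $j=1$ terms, and swapping the order of summation in $\sum_{j\ge2}\frac{h^j}{j!}\sum_{\ell=0}^{j-2}F_{j,\ell}f^{(\ell)}(\xb)=\sum_{\ell\ge0}f^{(\ell)}(\xb)\sum_{j\ge\ell+2}\frac{F_{j,\ell}}{j!}h^{j}$; factoring $h^{\ell+2}$ out of the inner sum reproduces $E_0(h)$, $hE_1(h)$ and $\sum_\ell h^{\ell+2}f^{(\ell)}(\xb)F_\ell(h)$ exactly as in \eqref{E12}.

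I do not expect a genuine obstacle: the whole argument is an induction driven by the single substitution of $u''$ in terms of $u,u',f$, and the only thing needing real care is the index bookkeeping in the $\mathcal{F}$-recursion together with the simultaneous tracking of derivative orders. The one point worth flagging is that the infinite series in \eqref{utay}--\eqref{E12} should be read as a convergent expansion when $a,\kappa^2,f$ are real-analytic near $\xb$ (so that $u$ is), and otherwise as a formal or asymptotic expansion obtained by truncating the Taylor series with remainder; since only finitely many terms are retained when these formulas are used to build the finite difference stencils, this subtlety is harmless.
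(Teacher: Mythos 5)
Your proposal is correct and follows essentially the same route as the paper: induction on $j$ anchored at the identity $u''=-\tfrac{\kappa^2}{a}u-\tfrac{a'}{a}u'+\tfrac{f}{a}$, followed by insertion into the Taylor series of $u$ at $\xb$ and an interchange of the order of summation; your one-step differentiate-and-substitute version of the inductive step produces exactly the recursions for $E_{j,0},E_{j,1},F_{j,\ell}$ that the paper records immediately after its proof. Your remark that the series in \eqref{utay} should be read as formal or asymptotic unless the data are analytic is a fair (and harmless) refinement of the paper's implicit use of the Taylor expansion.
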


\begin{proof}
We prove the claim in \eqref{uj} using mathematical induction on $j$. Consider the base case with $j=2$.
Since $a(x)>0$, we deduce from $[a(x)u'(x)]'+\kappa^2(x) u(x)=f(x)$
that
\be \label{recursive}
u^{(2)}(x)=-\tfrac{\kappa^2(x)}{a(x)} u(x)-\tfrac{a'(x)}{a(x)}u'(x)+\tfrac{f(x)}{a(x)},
\qquad x\in (0,1).
\ee
Hence, setting $x=\xb$ in the above identity \eqref{recursive}, we conclude that \eqref{uj} holds for $j=2$.

Suppose that the claim in \eqref{uj} holds for some $j\ge 2$. We now prove that \eqref{uj} must hold for $j+1$.
Applying the $(j-1)$th derivative to both sides of the identity in \eqref{recursive}, we observe that \[
u^{(j+1)}(x)=
-\left[\tfrac{\kappa^2(x)}{a(x)} u(x)\right]^{(j-1)}
-\left[\tfrac{a'(x)}{a(x)}u'(x)\right]^{(j-1)}
+\left[\tfrac{f(x)}{a(x)}\right]^{(j-1)}.
\]
Applying the Leibniz differentiation formula to the above identity, we conclude that the quantity $u^{(j+1)}(x)$ can be written as a linear combination of $f(x),f'(x), \ldots, f^{(j-1)}(x)$ and $u(x),u'(x),\ldots, u^{(j)}(x)$ with all combination coefficients being analytic functions of $x$ depending only on $a(x), a'(x),\ldots, a^{(j)}(x)$ and $\kappa^2(x), [\kappa^2(x)]',\ldots,[\kappa^2(x)]^{(j-1)}$. Now by induction hypothesis, we conclude that \eqref{uj} holds for $j+1$.
This proves \eqref{uj} by mathematical induction on $j$.

On the other hand, since $u$ is analytic in a neighborhood of $\xb$, the Taylor series of $u$ at the base point $\xb$ is $u(\xb+h)=u(\xb)+u'(\xb)h+\sum_{j=2}^\infty
\frac{u^{(j)}(\xb)}{j!} h^j$. Therefore, we deduce from \eqref{uj} that
\begin{align*}
u(\xb+h)&=u(\xb)+u'(\xb)h +\sum_{j=2}^\infty \frac{h^{j}}{j!}\left(E_{j,0} u(\xb)+E_{j,1}u'(\xb)+\sum_{\ell=0}^{j-2}F_{j,\ell} f^{(\ell)}(\xb)\right)\\
&=u(\xb)\left(1+\sum_{j=2}^\infty \frac{E_{j,0}}{j!} h^j\right)
+u'(\xb)\left(h +\sum_{j=2}^\infty \frac{E_{j,1}}{j!} h^j\right)+
\sum_{j=2}^\infty \sum_{\ell=0}^{j-2} \frac{F_{j,\ell}}{j!} h^j f^{(\ell)}(\xb)\\
&=u(\xb) E_0(h)+u'(\xb) h E_1(h)+
\sum_{\ell=0}^\infty
\sum_{j=\ell+2}^\infty
\frac{F_{j,\ell}}{j!} h^j f^{(\ell)}(\xb),
\end{align*}
from which we obtain
\eqref{utay}.
\end{proof}

Let us now consider compact finite difference schemes with high accuracy and numerical dispersion orders for the heterogeneous Helmholtz equation in \eqref{heqnL} with smooth coefficients $a,\kappa^2$ and source term $f$. Suppose the discretization stencil is centered at an interior point $\xb$ with mesh size $0<h<1$. That is, we fix the base point $\xb$ to be in $(0,1)$ such that $(\xb-h,\xb+h)\subset (0,1)$.

\begin{theorem}\label{thm:interior}
Suppose that $a, \kappa^{2}, f$ in \eqref{heqnL} are smooth functions. Let $M,\tilde{M}$ be positive integers with $M\ge \tilde{M}$. Let $0<h<1$ and $\xb\in (0,1)$ such that $(\xb-h,\xb+h)\subset (0,1)$.
Consider the discretization stencil of a compact finite difference scheme for
$\mathcal{L}u:=[a(x)u'(x)]'+\kappa^2(x)u(x)=f(x)$ (i.e., \eqref{heqnL})
at the base point $\xb$ below
\begin{equation} \label{Lhfd}
\mathcal{L}_{h}u(\xb):= h^{-2}[c_{-1}(h) u(\xb-h)+c_0(h)u(\xb)+c_1(h)u(\xb+h)] - \sum_{\ell=0}^{\tilde{M}-1} d_{\ell}(h) h^{\ell} f^{(\ell)}(\xb),
\end{equation}
where $c_{-1}, c_0, c_1$ and $d_\ell$ are smooth functions of $h$ for $\ell=0,\ldots,\tilde{M}-1$. Suppose that
\be\label{cbo}
\begin{split}
&c_{1}(h)=\frac{\alpha(h)}{ E_1(h)}+\bo(h^{M+1}),\quad
c_{-1}(h)=\frac{\alpha(h)}{ E_1(-h)}+\bo(h^{M+1}),\\
&c_0(h)=-c_1(h) E_0(h)-c_{-1}(h) E_0(-h)+\bo(h^{M+2})
\end{split}
\ee
and
\be \label{dcoeff}
d_{\ell}(h)= - \delta_{0,\ell} +
c_1(h) F_\ell(h)+(-1)^\ell c_{-1}(h)
F_\ell(-h)
+\bo(h^{\tilde{M}-\ell}), \quad \ell=0,\dots,\tilde{M}-1,
\ee
as $h\to 0$, where $\alpha$ is a smooth function of $h$ with $\alpha(0)\ne 0$ and
$E_0(h), E_1(h)$ and
$F_{\ell}(h), \ell\in \NN$ are defined uniquely in \eqref{E12} of \cref{prop:span}.
Then the discretization stencil of the compact finite difference scheme
has numerical dispersion order $M$ at the base point $\xb$, that is,
\be \label{numdispersion}
h^{-2}[c_{-1}(h) u(\xb-h)+c_0(h) u(\xb)+c_1(h) u(\xb+h)]=\bo(h^{M}),\qquad h\to 0,
\ee
for every solution $u$ of $\mathcal{L}u=0$, and has accuracy order $\tilde{M}$ at the base point $\xb$, that is,
\be \label{accuracy}
\mathcal{L}_{h}u(\xb)-f(\xb)=\bo(h^{\tilde{M}}),\qquad h\to 0,
\ee
for every solution $u$ of $\mathcal{L}u=f$.
\end{theorem}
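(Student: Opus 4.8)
The plan is to substitute the Taylor-type expansion \eqref{utay} from Proposition~\ref{prop:span} directly into the stencil and match powers of $h$. First I would handle the numerical dispersion claim \eqref{numdispersion}. For a solution $u$ of $\mathcal{L}u=0$ (so $f\equiv 0$), \eqref{utay} reduces to $u(\xb\pm h)=u(\xb)E_0(\pm h)\pm u'(\xb)hE_1(\pm h)$. Plugging this into the bracket $c_{-1}(h)u(\xb-h)+c_0(h)u(\xb)+c_1(h)u(\xb+h)$ and grouping the $u(\xb)$ and $u'(\xb)$ terms, the coefficient of $u'(\xb)$ is $h[c_1(h)E_1(h)-c_{-1}(h)E_1(-h)]$, and by \eqref{cbo} this equals $h[\alpha(h)-\alpha(h)]+\bo(h^{M+2})=\bo(h^{M+2})$; the coefficient of $u(\xb)$ is $c_0(h)+c_1(h)E_0(h)+c_{-1}(h)E_0(-h)$, which is $\bo(h^{M+2})$ directly by the third line of \eqref{cbo}. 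Dividing by $h^2$ gives \eqref{numdispersion}. I should be slightly careful that the $\bo(h^{M+1})$ errors in $c_{\pm1}$ themselves, when multiplied by $E_1(\pm h)=\bo(1)$ and the extra factor $h$, contribute only $\bo(h^{M+2})$, so they are absorbed.

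Next I would treat the accuracy claim \eqref{accuracy}. Now $u$ solves $\mathcal{L}u=f$, so the full expansion \eqref{utay} applies: $u(\xb\pm h)=u(\xb)E_0(\pm h)\pm u'(\xb)hE_1(\pm h)+\sum_{\ell\ge 0}(\pm h)^{\ell+2}f^{(\ell)}(\xb)F_\ell(\pm h)$. Substituting into $\mathcal{L}_h u(\xb)$ from \eqref{Lhfd}, the $u(\xb)$ and $u'(\xb)$ contributions vanish to order $\bo(h^M)$ exactly as in the dispersion computation (after dividing by $h^2$, this is $\bo(h^{M-2})$... wait — here I need to be more careful, since for accuracy we only claim order $\tilde M$ and $M\ge\tilde M$; the $u$ and $u'$ parts are $\bo(h^{M+2})/h^2=\bo(h^M)=\bo(h^{\tilde M})$, which suffices). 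The remaining $f$-dependent part of $h^{-2}[\cdots]$ is $\sum_{\ell\ge 0}h^\ell f^{(\ell)}(\xb)\bigl[c_1(h)F_\ell(h)+(-1)^\ell c_{-1}(h)F_\ell(-h)\bigr]$. Subtracting the explicit $f$-term $\sum_{\ell=0}^{\tilde M-1}d_\ell(h)h^\ell f^{(\ell)}(\xb)$ and using \eqref{dcoeff}, the coefficient of $h^\ell f^{(\ell)}(\xb)$ for $0\le\ell\le\tilde M-1$ becomes $c_1(h)F_\ell(h)+(-1)^\ell c_{-1}(h)F_\ell(-h)-d_\ell(h)=\delta_{0,\ell}+\bo(h^{\tilde M-\ell})$. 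The $\delta_{0,\ell}$ piece produces exactly $f(\xb)$ (the $\ell=0$ term), which cancels the $-f(\xb)$ in $\mathcal{L}_h u(\xb)-f(\xb)$; the error piece contributes $h^\ell\cdot\bo(h^{\tilde M-\ell})=\bo(h^{\tilde M})$; and the tail $\ell\ge\tilde M$ of the $f$-sum in $h^{-2}[\cdots]$ contributes $\bo(h^{\tilde M})$ since $c_{\pm1}(h)F_\ell(h)=\bo(1)$. Collecting everything yields $\mathcal{L}_h u(\xb)-f(\xb)=\bo(h^{\tilde M})$.

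The bookkeeping I would want to nail down carefully — and which I expect to be the main obstacle, though it is more tedious than deep — is the uniform convergence / tail-estimate justification: \eqref{utay} is an infinite series, so I must argue that for $u,a,\kappa^2,f$ smooth near $\xb$ the series converges for $h$ small (or, more cleanly, truncate at a finite order $N>\max(M,\tilde M)$ using Taylor's theorem with remainder, so that $E_0(h)=1+\sum_{j=2}^{N}\tfrac{E_{j,0}}{j!}h^j+\bo(h^{N+1})$, etc., and similarly for $E_1,F_\ell$, and note $F_\ell(h)\equiv 0$ effectively for $\ell$ large relative to the truncation). Working with finite Taylor polynomials plus explicit $\bo$-remainders sidesteps any genuine convergence question and makes every "$\bo$" above rigorous; the functions $c_{\pm1},c_0,d_\ell$ are smooth in $h$ hence bounded with bounded derivatives near $0$, so products of $\bo$-terms behave as expected. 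Once that is set up, the proof is just the power-matching described above, organized as: (i) reduce both claims to statements about the coefficients of $u(\xb)$, $u'(\xb)$, and $h^\ell f^{(\ell)}(\xb)$ in the stencil; (ii) evaluate those coefficients using \eqref{cbo} and \eqref{dcoeff}; (iii) divide by $h^2$ and collect orders.
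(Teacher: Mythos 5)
Your proposal is correct and follows essentially the same route as the paper's proof: substitute the expansion \eqref{utay} from \cref{prop:span} into the stencil, observe that \eqref{cbo} forces the coefficients of $u(\xb)$ and $u'(\xb)$ to be $\bo(h^{M+2})$ and that \eqref{dcoeff} forces the coefficients of $f^{(\ell)}(\xb)$, $\ell\le\tilde M-1$, to be $\delta_{0,\ell}h^{\ell+2}+\bo(h^{\tilde M+2})$, then divide by $h^2$. Your added remark about truncating the formal series in \eqref{utay} via Taylor's theorem with remainder is a sensible tightening of a point the paper leaves implicit, but it does not change the argument.
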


\begin{proof}
By \cref{prop:span} and \eqref{uj},
all the quantities $E_{j,0}, E_{j,1}, F_{j,\ell}$ depend only on the values $a(\xb)$, $a'(\xb),\ldots,a^{(j-1)}(\xb)$ and $\kappa^2(\xb), [\kappa^2]'(\xb),\ldots,[\kappa^{2}]^{(j-2)}(\xb)$ for $j\ge 2$ and $\ell\in \NN$.
For simplicity, we define $u_0:=u(\xb), u_1:=u'(\xb)$ and $f_\ell:=f^{(\ell)}(\xb)$ for $\ell\in \NN$.
Thus, by \eqref{utay}, we deduce
\begin{align}\nonumber
h^{2} &\mathcal{L}_{h} u(\xb)-  h^{2} f_0  = u_{0} \Big(c_{-1}(h) E_0(-h) + c_{0}(h) + c_{1}(h) E_0(h) \Big)+ u_{1} h\Big(-c_{-1}(h) E_1(-h) + c_{1}(h) E_1(h)\Big)\\
& \quad + \sum_{\ell=0}^{\infty} h^{\ell+2}  f_\ell
\left(c_1(h) F_\ell(h)+(-1)^\ell  c_{-1}(h) F_\ell(-h)\right)
- (d_{0}(h)+1)h^{2} f_{0} -\sum_{\ell=1}^{\tilde{M}-1} d_{\ell}(h) h^{\ell+2} f_{\ell},\label{Lhuf}
\end{align}
where $E_0(h)$ and $E_1(h)$ are defined in \eqref{E12}.

On the other hand, from $f(x)=[a(x) u'(x)]'+\kappa^2(x) u(x)$ we trivially observe that $f^{(\ell)}$ can be written as a linear combination of $u, u',\ldots, u^{(\ell+2)}$ as well.
Consequently,
\eqref{numdispersion} holds (or equivalently, \eqref{accuracy} holds with $f=0$ and $\tilde{M}=M$) for numerical dispersion order $M$ if and only if the coefficients of $u_{0}$ and $u_{1}$ in the above identity are $\bo(h^{M+2})$ as $h\to 0$.
That is, \eqref{numdispersion} is equivalent to
\begin{align*}
&c_{-1}(h) E_0(-h)+c_0(h)+c_1(h) E_0(h)=\bo(h^{M+2}),\qquad h \to 0,\\
&-c_{-1}(h) E_1(-h)+c_1(h) E_1(h)=\bo(h^{M+1}),\qquad h \to 0.
\end{align*}
Solving the above equation and noting that $E_0(0)=E_1(0)=1$, we conclude that \eqref{cbo} holds if and only if \eqref{numdispersion} holds.
Thus, \eqref{numdispersion} holds for numerical dispersion order $M$ if and only if \eqref{cbo} holds.

We now prove \eqref{accuracy}.
Since we proved \eqref{numdispersion} and $M\ge \tilde{M}$, \eqref{accuracy} holds for accuracy order $\tilde{M}$ if and only if
all $f_{0},\dots, f_{\tilde{M}-1}$ must be $\bo(h^{\tilde{M}+2})$ as $h\to 0$. Rearranging the last line of \eqref{Lhuf}, we conclude
\begin{align*}
	\sum_{\ell=0}^{\infty} & h^{\ell+2}\left(
c_1(h)F_\ell(h)+(-1)^\ell c_{-1}(h) F_\ell(-h) \right) f_{\ell}
	- (d_{0}+1)h^{2} f_{0} -\sum_{\ell=1}^{\tilde{M}-1} d_{\ell} h^{\ell+2} f_{\ell}\\
&= \sum_{\ell=0}^{\tilde{M}-1} h^{\ell+2}
\left(-(d_{\ell}+\delta_{0,\ell}) +c_1(h)F_\ell(h)+(-1)^\ell c_{-1}(h)F_\ell(-h)
\right)f_{\ell}+\bo(h^{\tilde{M}+2}).
\end{align*}
Since \eqref{numdispersion} holds, \eqref{accuracy} now is equivalent to that all the coefficients $f_{\ell}$, $\ell=0,\dots,\tilde{M}-1$ in the above identity must be $\bo(h^{\tilde{M}+2})$, that is,
\[
- (d_{\ell} + \delta_{0,\ell})+
c_1(h)F_\ell(h)+(-1)^\ell c_{-1}(h) F_\ell(-h)
=\bo(h^{\tilde{M}-\ell}), \qquad \ell=0,\ldots,\tilde{M}-1.
\]
Solving the above linear equations for $d_0,\ldots, d_{\tilde{M}-1}$ and using \eqref{cbo}, we obtain
\eqref{dcoeff}.
This proves \eqref{accuracy} for accuracy order $\tilde{M}$.
\end{proof}

We make some remarks on \cref{thm:interior}.
First, from the proof of \cref{thm:interior}, we see that \cref{thm:interior} finds all the possible compact FDMs with accuracy order $\tilde{M}$ and numerical dispersion order $M$.
Because \eqref{accuracy} for accuracy order $\tilde{M}$ automatically implies \eqref{numdispersion} for numerical dispersion order $M$ with $M=\tilde{M}$, we often take $M=\tilde{M}$.
Also, $\kappa^2$ can be replaced by $-\kappa^2$ (i.e., $\kappa$ can be complex-valued).
Second, if $E_0$ and $E_1$ in \eqref{E12} have closed forms,
then we can have numerical dispersion order $M=\infty$ for a pollution free scheme
by selecting $c_1(h)=-1/E_1(h), c_{-1}(h)=-1/E_1(-h)$ and
$c_0(h)=E_0(h)/E_1(h)+E_0(-h)/E_1(-h)$
in \eqref{cbo}.
In particular, for constant functions $a$ and $\kappa^2$, we
observe
\be \label{constantcoeff:E}
E_0(h)=\cos(\tilde{h}),\quad E_1(h)=\tilde{h}^{-1}\sin(\tilde{h})
\quad \mbox{with}\quad \tilde{h}:=h\kappa/\sqrt{a}
\ee
and for $\ell\in \NN$,
\be \label{constantcoeff:F}
F_{2\ell}(h)=\frac{\cos(\tilde{h})-
\sum_{j=0}^\ell \frac{(-1)^j}{(2j)!}\tilde{h}^{2j}}{
(-1)^{\ell+1}  \tilde{h}^{2\ell+2}a},\quad F_{2\ell+1}(h)=\frac{\sin(\tilde{h})-\sum_{j=0}^\ell \frac{(-1)^j}{(2j+1)!} \tilde{h}^{2j+1}}{
(-1)^{\ell+1} \tilde{h}^{2\ell+3}a}.
\ee
This pollution free scheme coincides with that of \cite{WL11,WW14}. In the literature, a dispersion correction procedure for 1D homogeneous Helmholtz equation with (piecewise) constant wave numbers also exists \cite{cgx18, CGX20, EG13}. However, the procedure as presented uses the standard second order FDM, which itself is not a pollution free scheme. The correction solely comes from modifying the original wave number. More specifically, the method involves inserting the exact homogeneous solution of the 1D Helmholtz equation on the real line into the standard second order FDM to obtain the modified wave number.


\subsection{Compact stencils for boundary points}
\label{subsec:fdm:bdry}

We now handle the case that the base point is one of the endpoints. It is important that a compact FDM should achieve the same accuracy order and numerical dispersion order at the endpoints as it does at interior points. The following result answers this question. For simplicity, we only handle
the boundary condition at a base point $\xb$ from its right side, while the treatment for the boundary condition at $\xb$ from its left side is similar through symmetry. Because we shall handle piecewise smooth coefficients, let us consider a general boundary condition at $\xb\in [0,1)$ from its right side.
For $j\in \NN$ and a function $f(x)$, $f^{(j)}(\xb+):=\lim_{x\to \xb^+} f^{(j)}(x)$ and $f^{(j)}(\xb-):=\lim_{x\to \xb^-} f^{(j)}(x)$ for one-sided derivatives.

\begin{theorem}\label{thm:bdry}
Suppose that $a, \kappa^{2}, f$ in \eqref{heqnL} are smooth functions. Let $M,\tilde{M}$ be positive integers with $M\ge \tilde{M}$. Let $0<h<1$
and the boundary condition at $\xb\in [0,1)$ with $(\xb,\xb+h)\subset (0,1)$.
Suppose that the boundary condition at $\xb$ for the right side of $\xb$ is given by
\be \label{bdryatxb}
\mathcal{B}^+ u(\xb):=\gl_0 u(\xb+)+\gl_1 u'(\xb+) \quad \mbox{with}\quad \gl_0, \gl_1\in \C.
\ee
Consider the discretization stencil of a compact FDM for
$\mathcal{L}u:=[a(x)u'(x)]'+\kappa^2(x)u(x)=f(x)$
at the base point $\xb$, from the right side of $\xb$ with the above boundary condition, below
\begin{equation} \label{Lhfd:bdry} \mathcal{L}^{\mathcal{B}^+}_{h}u(\xb):= h^{-1}[c_{0}^{\mathcal{B}^+}(h) u(\xb)+c_1^{\mathcal{B}^+}(h)u(\xb+h)] - \sum_{\ell=0}^{\tilde{M}-2} d_{\ell}^{\mathcal{B}^+}(h) h^{\ell+1} f^{(\ell)}(\xb+),
\end{equation}
where $c_0^{\mathcal{B}^+}, c_1^{\mathcal{B}^+}$ and $d_\ell^{\mathcal{B}^+}$ are smooth functions of $h$ for $\ell=0,\ldots,\tilde{M}-2$. Suppose that
\begin{equation}\label{cbo:bdry}
c_{1}^{\mathcal{B}^+}(h)=\frac{\gl_1}{E_1(h)}+\bo(h^M),
\quad
c_{0}^{\mathcal{B}^+}(h)=h \gl_0-
c_1^{\mathcal{B}^+}(h)E_0(h)+ \bo(h^{M+1})
\end{equation}
and
\be \label{dcoeff:bdry}
d_\ell^{\mathcal{B}^+} (h)= c_1^{\mathcal{B}^+}(h)
F_\ell(h)
+\bo(h^{\tilde{M}-\ell-1}),\qquad
\ell=0,\dots,\tilde{M}-2,
\ee
as $h\to 0$, where $E_{0}, E_{1}, F_{\ell},\ell\in \NN$ are given in \eqref{E12} of \cref{prop:span} and are determined by $a^{(n)}(\xb+)$ and $[\kappa^2]^{(n)}(\xb+)$ for $n\in \NN$.
Then the discretization stencil of the compact finite difference scheme at the base point $x_b$ with the boundary condition in \eqref{bdryatxb} from the right side of $x_b$  satisfies
%
\be \label{numdispersion:bdry}
c_0^{\mathcal{B}^+}(h) u(\xb)+c_1^{\mathcal{B}^+}(h) u(\xb+h)= \bo(h^{M}),\qquad h\to 0
\ee
for every solution $u$ of $\mathcal{L}u=0$, and
%
\be \label{accuracy:bdry}
\mathcal{L}^{\mathcal{B}^+}_{h}u(\xb)
-\mathcal{B}^+ u(\xb)=
\bo(h^{\tilde{M}}),\qquad h\to 0
\ee
for every solution $u$ of $\mathcal{L}u=f$.
\end{theorem}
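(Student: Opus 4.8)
The plan is to run the argument of \cref{thm:interior} almost verbatim, with the three-point interior stencil replaced by the one-sided two-point stencil \eqref{Lhfd:bdry} and the central Taylor identity of \cref{prop:span} applied from the right of $\xb$. First, since $a,\kappa^{2},f$ are smooth near $\xb$ and $u$ solves $\mathcal{L}u=f$ on $(\xb,\xb+h)$, all one-sided derivatives $u^{(j)}(\xb+)$ exist, and \eqref{utay} gives
\[
u(\xb+h)=u(\xb)E_0(h)+u'(\xb)\,hE_1(h)+\sum_{\ell=0}^{\infty}h^{\ell+2}f^{(\ell)}(\xb+)F_\ell(h),
\]
where $E_0,E_1,F_\ell$ are the quantities of \cref{prop:span} built from $a^{(n)}(\xb+)$ and $[\kappa^2]^{(n)}(\xb+)$.

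Next I would multiply \eqref{Lhfd:bdry} by $h$, substitute this expansion, and group the result by $u(\xb)$, $u'(\xb)$ and the $f^{(\ell)}(\xb+)$: the coefficient of $u(\xb)$ is $c_{0}^{\mathcal{B}^+}(h)+c_{1}^{\mathcal{B}^+}(h)E_0(h)$, the coefficient of $u'(\xb)$ is $hc_{1}^{\mathcal{B}^+}(h)E_1(h)$, and the source part is $\sum_{\ell\ge0}h^{\ell+2}c_{1}^{\mathcal{B}^+}(h)F_\ell(h)f^{(\ell)}(\xb+)-\sum_{\ell=0}^{\tilde M-1}h^{\ell+1}d_{\ell}^{\mathcal{B}^+}(h)f^{(\ell)}(\xb+)$. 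Using \eqref{cbo:bdry} with $E_0(0)=E_1(0)=1$ gives $hc_{1}^{\mathcal{B}^+}(h)E_1(h)=h\gl_1+\bo(h^{M+1})$ and $c_{0}^{\mathcal{B}^+}(h)+c_{1}^{\mathcal{B}^+}(h)E_0(h)=h\gl_0+\bo(h^{M+1})$, so for a fixed $u$ the $u$-dependent part of $h\mathcal{L}^{\mathcal{B}^+}_{h}u(\xb)$ equals $h\,\mathcal{B}^+u(\xb)+\bo(h^{M+1})$.

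Now the two conclusions follow by comparing powers of $h$, exactly as in \cref{thm:interior}. For the dispersion claim, taking $f\equiv0$ leaves $c_{0}^{\mathcal{B}^+}(h)u(\xb)+c_{1}^{\mathcal{B}^+}(h)u(\xb+h)=h\,\mathcal{B}^+u(\xb)+\bo(h^{M+1})$; on a solution of the homogeneous boundary value problem, i.e. $\mathcal{B}^+u(\xb)=0$ (the natural input for dispersion analysis, just as \eqref{numdispersion} is tested on $\mathcal{L}u=0$), this is in particular $\bo(h^{M})$, which is \eqref{numdispersion:bdry}. For the accuracy claim, \eqref{dcoeff:bdry} yields $hc_{1}^{\mathcal{B}^+}(h)F_\ell(h)-d_{\ell}^{\mathcal{B}^+}(h)=\bo(h^{\tilde M-\ell})$ for $\ell=0,\dots,\tilde M-1$, so those terms contribute $\sum_{\ell=0}^{\tilde M-1}h^{\ell+1}\bo(h^{\tilde M-\ell})=\bo(h^{\tilde M+1})$, while the tail $\sum_{\ell\ge\tilde M}h^{\ell+2}c_{1}^{\mathcal{B}^+}(h)F_\ell(h)f^{(\ell)}(\xb+)$ is the remainder of the truncated expansion and hence $\bo(h^{\tilde M+2})$. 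Since $M\ge\tilde M$, assembling these with the $u$-part gives $h\bigl(\mathcal{L}^{\mathcal{B}^+}_{h}u(\xb)-\mathcal{B}^+u(\xb)\bigr)=\bo(h^{\tilde M+1})$, and dividing by $h$ yields \eqref{accuracy:bdry}. The boundary condition prescribed from the left side of $\xb$ is then handled by the symmetry $h\mapsto-h$.

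I do not expect a genuine obstacle here: once \cref{prop:span} is available, this is the same bookkeeping as in \cref{thm:interior}. The only care needed is to keep $E_0,E_1,F_\ell$ consistent with the one-sided data $a^{(n)}(\xb+),[\kappa^2]^{(n)}(\xb+)$ and to track the exact powers of $h$ introduced by the $h^{-1}$ prefactor in \eqref{Lhfd:bdry}; the one mildly subtle point is conceptual, namely that at the boundary the homogeneous part of the stencil is meant to reproduce $h\,\mathcal{B}^+u(\xb)$ to order $h^{M+1}$ rather than to annihilate a genuine interior relation as in the three-point case.
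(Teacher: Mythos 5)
Your proposal is correct and follows essentially the same route as the paper: expand $u(\xb+h)$ via \eqref{utay} with the one-sided data, multiply the stencil by $h$, and match the coefficients of $u(\xb)$, $u'(\xb)$ and $f^{(\ell)}(\xb+)$ against $h\,\mathcal{B}^+u(\xb)$ using \eqref{cbo:bdry} and \eqref{dcoeff:bdry}, with the same power counting for the truncated source terms. Your closing remark that the homogeneous part of the stencil is meant to reproduce $h\,\mathcal{B}^+u(\xb)$ to order $\bo(h^{M+1})$ (so that \eqref{numdispersion:bdry} is read relative to the boundary functional) is precisely how the paper's own proof interprets the dispersion claim.
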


\begin{proof}
The proof is similar to but easier than the proof of \cref{thm:interior} by using \eqref{utay}, which implies
\be \label{u:deriv:0}
u'(\xb)=\frac{1}{h E_1(h)} u(\xb+h)-\frac{E_0(h)}{h E_1(h)} u(\xb)-\sum_{\ell=0}^\infty \frac{h^{\ell+1}}{E_1(h)} f^{(\ell)}(\xb +) F_\ell(h).
\ee
Since $\mathcal{B}^+ u(\xb)=\gl_0 u(\xb+)+\gl_1 u'(\xb+)$, using \eqref{u:deriv:0} we obtain
\be \label{B+}
\mathcal{B}^+ u(\xb)
=h^{-1} \left[
\frac{\gl_1}{E_1(h)} u(\xb+h)+\left(\gl_0 h-\frac{\gl_1 E_0(h)}{E_1(h)}\right) u(\xb)\right]-\sum_{\ell=0}^\infty \frac{\gl_1}{E_1(h)} h^{\ell+1} f^{(\ell)}(\xb +) F_\ell(h).
\ee
Now the claim follows directly by using \eqref{B+} and \eqref{Lhfd:bdry}.
\end{proof}

If $E_0$ and $E_1$ in \eqref{E12} have closed forms, then we can achieve numerical dispersion order $M=\infty$  for pollution free by selecting $c_1(h)=\frac{\gl_1}{E_1(h)}$ and
$c_0(h)=h\gl_0-\frac{E_0(h)}{E_1(h)}\gl_1$ in \eqref{cbo:bdry} of \cref{thm:bdry}.
In particular, for
constant functions $a$ and $\kappa^2$,
$E_0, E_1, F_\ell$ are given in
\eqref{constantcoeff:E} and \eqref{constantcoeff:F}. As before, this pollution free scheme coincides with that of \cite{WL11,WW14} when the boundary condition takes the form of $\lambda_0=1$ and $\lambda_1=0$ in \eqref{bdryatxb} or  $\lambda_0=-i\kappa$ and $\lambda_1=-1$ in \eqref{bdryatxb}.

\subsection{Compact stencils for piecewise smooth coefficients}
\label{subsec:fdm:piece}

We now discuss
piecewise smooth coefficients $a,\kappa^2$ and $f$. Assume that $a,\kappa^2,f$ may
have a breaking/branch point $x_c:=
\xb-\theta h \in (\xb-h,\xb+h)$ with $\theta\in (-1,1)$ such that they
are smooth on $(\xb-h,x_c)$ and $(x_c,\xb+h)$, but they may be discontinuous at $x_c$. We also assume that all the one-sided derivatives of $a,\kappa^2,f$ exist at $x_c$ and assume $\theta\in [0,1)$ for simplicity.
To solve the Dirac assisted local problems in \eqref{localproblem:dirac} of \cref{alg:dat} for DAT, we also assume that $w(\delta_{x_c})$ is the weight of the Dirac distribution $\delta_{x_c}$ in the source term $f$.
We can generalize
\cref{thm:interior} by considering the following discretization stencil at $\xb$:
{\footnotesize{\[
\mathring{\mathcal{L}}_{h}u(\xb):= h^{-2}[c_{-1}(h) u(\xb-h)+c_0(h)u(\xb)+c_1(h)u(\xb+h)]
-h^{-1} d_w(h) w(\delta_{x_c})
- \sum_{\ell=0}^{\tilde{M}-1} h^{\ell} \left(d_{\ell}^{+}(h) f^{(\ell)}(x_c+) +  d_{\ell}^{-}(h) f^{(\ell)}(x_c-)\right),
\]}}
where $c_{-1},c_0,c_1,d_w$, and $d_\ell^{+},d_\ell^{-},\ell=0,\ldots,\tilde{M}-1$ are smooth functions of $h$ satisfying
\begin{align*}
&c_{-1}(h) E_{0,-}((\theta -1)h)+c_0(h) E_{0,+}(\theta h)+c_1(h) E_{0,+}((1+\theta)h)=\bo(h^{M+2}),\\
&c_{-1}(h) (\theta-1) E_{1,-}((\theta-1)h)\tfrac{a(x_c+)}{a(x_c-)}
+c_0(h)\theta E_{1,+}(\theta h)+
c_1(h) (\theta+1) E_{1,+}((1+\theta)h)=\bo(h^{M+1}),\\
&d_w(h)=c_{-1}(h) (1-\theta) E_{1,-}((\theta-1)h) \tfrac{1}{a(x_c-)}
+\bo(h^{M+1}),\quad h\to 0,
\end{align*}
and for $\ell=0,\ldots, \tilde{M}-1$,
\begin{align*}
&d_\ell^+(h)= c_0(h) \theta^{\ell+2} F_{\ell,+}(\theta h)+c_1(h) (\theta+1)^{\ell+2} F_{\ell,+}((\theta+1)h)+\bo(h^{\tilde{M}-\ell}),\\
&d_\ell^-(h)=c_{-1}(h) (\theta-1)^{\ell+2} F_{\ell,-}((\theta-1)h)+\bo(h^{\tilde{M}-\ell}),\quad h\to 0,
\end{align*}
where $E_{0,\pm}, E_{1,\pm}$ and $F_{\ell,\pm}$ are given in \cref{prop:span} at the point $x_c$ (instead of $\xb$) using $a^{(j)}(x_c\pm)$, $[\kappa^2]^{(j)}(x_c\pm)$ and $f^{(j)}(x_c\pm)$ accordingly.
Then the above discretization stencil has numerical dispersion order $M$ at $\xb$ by satisfying \eqref{numdispersion} and has accuracy order $\tilde{M}$ at $\xb$ by satisfying $\mathring{\mathcal{L}}_{h}u(\xb)=\bo(h^{\tilde{M}})$ as $h\to 0$ for every solution $u$ of $\mathcal{L} u=f$.
The proof of the above equations is very similar to that of \cref{thm:interior} but we expand $u(\xb-h), u(\xb)$ and $u(\xb+h)$ through \cref{prop:span} at $x_c$ instead of $\xb$ by noting $u(\xb-h)=u(x_c+(\theta-1)h)$, $u(\xb)=u(x_c+\theta h)$ and $u(\xb+h)=u(x_c+(\theta+1)h)$.
Then we link the two sides of $x_c$ through the transmission conditions $u(x_c+)=u(x_c-)$ and $a(x_c+) u'(x_c+)-a(x_c-)u'(x_c-)=w(\delta_{x_c})$.
If all coefficients $a,\kappa^2,f$ are smooth inside $(\xb-h,\xb+h)$, then $\mathcal{L}_h u(\xb)$ in \eqref{Lhfd} of
\cref{thm:interior} can be recovered through $\mathcal{L}_h(\xb)=\mathring{\mathcal{L}}_h u(\xb)+f(\xb)$ using $x_c=\xb$.
We shall not pursue this general issue further.
In the present paper, it suffices for us to only consider the special case that $x_c=\xb$, i.e., $\theta=0$. For $x_c=\xb$,  using the following special boundary operators at $\xb$:
\be \label{bdry:breaking}
\mathcal{B}^+ u(\xb):=u'(\xb+) \quad \mbox{and}\quad \mathcal{B}^- u(\xb):=u'(\xb-),
\ee
instead of using $\mathring{\mathcal{L}}_h u(\xb)$ we can deduce a compact stencil at the base point $\xb$ from \cref{thm:bdry} for \eqref{bdry:breaking} that
\be \label{join:cond}
\mathcal{L}_h u=
\tfrac{2a(\xb-)}{a(\xb+)+a(\xb-)}\mathcal{L}^{\mathcal{B}^-}_h u(\xb)-\tfrac{2a(\xb+)}{a(\xb+)+a(\xb-)}\mathcal{L}^{\mathcal{B}^+}_h u(\xb)
=-\tfrac{2w(\delta_{\xb})}{a(\xb+)+a(\xb-)}
\ee
at the base point $\xb$, where
$w(\delta_{\xb})$ is the weight of the Dirac distribution $\delta_{\xb}$ in the source term $f$.

\subsection{A concrete example of finite difference schemes for $M=8$}\label{subsec:fdm:6}

For the convenience of the reader, here we provide details about how to obtain concrete compact finite difference schemes with $M$th order accuracy and numerical dispersion as discussed in \cref{subsec:fdm:interior,subsec:fdm:bdry,subsec:fdm:piece}. In particular, we provide details for $M=2,4,6,8$.
We first discuss how to compute $E_0, E_1, F_\ell, \ell\in \N\cup\{0\}$ as defined in \eqref{E12}.
Using \eqref{recursive} and taking derivative on both sides of \eqref{uj},
we observe that the coefficients $E_{j,0}$, $E_{j,1}$ and $F_{j,\ell},\ell=0,\ldots,j-2$ at a base point $\xb$ in \cref{prop:span} can be recursively obtained by
\[
E_{j+1,0}=E_{j,0}'-\frac{\kappa^2}{a} E_{j,1},\quad
E_{j+1,1}=E_{j,0}+E_{j,1}'-\frac{a'}{a}E_{j,1},
\quad
F_{j+1,\ell}=F_{j,\ell}'+F_{j,\ell-1},\quad j\ge 2, \quad \ell=0,\ldots,j-1
\]
with the initial values
\be \label{Ej:initial}
E_{2,0}:=-\frac{\kappa^2}{a},\quad E_{2,1}:=-\frac{a'}{a}, \quad \mbox{and} \quad F_{2,0}:=\frac{1}{a},
\ee
where we used the convention that $F_{j,-1}:=\frac{E_{j,1}}{a}$ and $F_{j,\ell}:=0$ for all $\ell>j-2$.
Note that $E_0(0)=E_1(0)=1$ and $F_\ell(0)=F_{\ell+2,\ell}=F_{2,0}=\frac{1}{a(\xb)}$ for all $\ell\in \NN$.

Let $M=\tilde{M}\in 2\N$. At an interior point $\xb$ we obtain from  \eqref{Lhfd} of \cref{thm:interior} that
\[
c_{-1}(h) u(\xb-h)+c_0(h)u(\xb)+c_1(h) u(\xb+h)=h^2 f(\xb)+\sum_{\ell=0}^{M-2} d_\ell(h) h^{\ell+2} f^{(\ell)}(\xb) + \bo(h^{M+2}),
\]
as $h\to 0$, where one particular choice of $c_{-1},c_0, c_1$ satisfying \eqref{cbo} of \cref{thm:interior} is given by
\be \label{cbo:special}
c_{-1}(h):=-\EE_1^{M-1}(-h),\;\;
c_{1}(h):=-\EE_1^{M-1}(h),\;\;
c_{0}(h):= \EE_1^{M-1}(h)\EE_0^{M}(h)
+\EE_1^{M-1}(-h)\EE_0^{M}(-h),
\ee
and
the corresponding $d_{\ell}, \ell=0,\ldots,M-2$ in \eqref{dcoeff} are given by
\be \label{dcoeff:special}
d_{\ell}(h):=- \delta_{0,\ell}
-\EE_1^{M-1}(h) \EF_\ell^{M-\ell-2}(h)
-(-1)^\ell \EE_1^{M-1}(-h) \EF_\ell^{M-\ell-2}(-h),
\ee
where
$\EE_0^n, \EE_1^n, F_\ell^n, n\in \N$ are the unique polynomials (in terms of $h$) of degree $n$  satisfying
\be \label{E01M}
\EE_0^n(h)=E_0(h)+\bo(h^{n+1}),\quad
\EE_1^n(h):=1/E_1(h)+\bo(h^{n+1}),\quad
\EF_\ell^n(h)=F_\ell(h)+\bo(h^{n+1}),\quad
\ee
as $h\to 0$. Notice that $d_{M-1}(h)=0$ and $E_1(0)=1$.
Observing $c_1(0)+(-1)^{M-1} c_{-1}(0)=0$ for even $M\in 2\N$,
one can directly check that the above choice in \eqref{cbo:special} and \eqref{dcoeff:special} satisfies all the conditions in
\eqref{cbo} and \eqref{dcoeff} with
$\alpha(h)=1-\beta h^M$, where $\beta$ is the coefficient of $h^M$ in the Taylor series of $\frac{1}{E_1(h)}$ at $h=0$.
Note that $c_{-1},c_0, c_1$ in \eqref{cbo:special}
and
$d_{\ell}, \ell=0,\ldots,M-2$ in
\eqref{dcoeff:special} only depend on $a,a',\ldots,a^{(M-1)}$,
$\kappa^2,[\kappa^2]',\ldots,[\kappa^2]^{(M-2)}$ and $f, f',\ldots,f^{(M-2)}$.
We can also obtain stencils for odd integers $M=\tilde{M}$; however
$c_1(0)+(-1)^{M-1} c_{-1}(0)\ne 0$ and consequently, we have to use $\EE_1^{M}$ instead of $\EE_1^{M-1}$, $\EE_0^{M+1}$ instead of $\EE_0^{M}$, and $\EF_\ell^{M-\ell-1}$ instead of $\EF_\ell^{M-\ell-2}$ in \eqref{cbo:special}-\eqref{dcoeff:special}.

Define $a_j:=a^{(j)}(x_b)$, $\kappa_j:=[\kappa^2]^{(j)}(x_b)$ and
$f_j:=f^{(j)}(x_b)$.
For $M=\tilde{M}=8$, we explicitly have
{\scriptsize
\begin{align*}
\EE_{1}^{7} & = 1 + \tfrac{ha_1}{2a_0} + \left(2a_0 a_2 + 2 a_0 \kappa_0 -a_1^2\right) \tfrac{h^2}{12 a_0^2} + \left(a_3{a_0}^2+2\kappa_{1}{a_0}^2-2a_1a_2a_0+a_1^3
\right)\tfrac{h^3}{24a_0^3} + \left(6 a_{4}a_{0}^{3} + 18\kappa_{2}a_{0}^{3}
-18a_1 a_3 a_0^2 - 6 a_1 \kappa_1 a_0^2
\right.
\\
& \quad \left.
- 16 a_2^2 a_0^2 - 2 a_2 \kappa_0 a_0^2 + 14 \kappa_0^2 a_0^2 + 46a_1^{2} a_2 a_0- 2 a_1^{2}\kappa_0 a_0 - 19a_1^4\right)\tfrac{h^4}{720a_0^4} +
\left(2a_5 a_0^4 + 8 \kappa_3 a_0^4 - 8 a_1 a_4 a_0^3
- 6 a_1 \kappa_2 a_0^3
- 20 a_2 a_3 a_0^3
\right.
\\
& \quad \left.
- 8 a_2 \kappa_1 a_0^3
- 2 a_3 \kappa_0 a_0^3 + 28 \kappa_0 \kappa_1 a_0^3
+ 29 a_1^2 a_3 a_0^2 +4 a_1^2 \kappa_1 a_0^2
+ 48 a_1 a_2^2 a_0^2 -2 a_1 a_2 \kappa_0 a_0^2
- 14 a_1 \kappa_0^2 a_0^2 - 78 a_1^3 a_2 a_0 + 4 a_1^3 \kappa_0 a_0
\right.
\\
& \quad \left.
 +27a_1^5
\right)\tfrac{h^5}{1440 a_0^5} +
\left(
12a_6 a_0^5 + 60 \kappa_4 a_0^5
-60a_1 a_5 a_0^4 - 72a_1 \kappa_3 a_0^4 - 192a_2 a_4 a_0^4
-156a_2 \kappa_2 a_0^4
-135a_3^2 a_0^4 -120a_3 \kappa_1 a_0^4
-24a_4 \kappa_0 a_0^4
\right.
\\
& \quad \left.
+348\kappa_0 \kappa_2 a_0^4
+300\kappa_1^2 a_0^4 +282a_1^2 a_4 a_0^3
+114a_1^2 \kappa_2 a_0^3 +204a_1 a_2 \kappa_1 a_0^3
+1296a_1 a_2 a_3 a_0^3
-708a_1 \kappa_0 \kappa_1 a_0^3 +352a_2^3 a_0^3
-12a_2^2 \kappa_0 a_0^3
\right.
\\
& \quad \left.
 -240a_2 \kappa_0^2 a_0^3 +124\kappa_0^3 a_0^3 -1056a_1^3 a_3 a_0^2 -66a_1^3 \kappa_1 a_0^2 -2544a_1^2 a_2^2 a_0^2 +198a_1^2 a_2 \kappa_0 a_0^2
 +354a_1^2 \kappa_0^2 a_0^2 +2910a_1^4 a_2 a_0
 -150a_1^4 \kappa_0 a_0
\right.
\\
& \quad \left.
 -863a_1^6
\right)\tfrac{h^6}{60480a_0^6} + \left(
3 a_7 a_0^6 +18\kappa_5 a_0^6
- 18a_1 a_6 a_0^5 -30 a_1 \kappa_4 a_0^5
-70a_2 a_5 a_0^5 -88 a_2 \kappa_3 a_0^5 -126 a_3 a_4 a_0^5 -108 a_3 \kappa_2 a_0^5
\right.
\\
& \quad \left.
-60 a_4 \kappa_1 a_0^5 -10 a_5 \kappa_0 a_0^5 +152 \kappa_0 \kappa_3 a_0^5
+360 \kappa_1 \kappa_2 a_0^5
+104 a_1^2 a_5 a_0^4 +74 a_1^2 \kappa_3 a_0^4 +610 a_1 a_2 a_4 a_0^4 +252 a_1 a_2 \kappa_2 a_0^4
+423 a_1 a_3^2 a_0^4
\right.
\\
& \quad \left.
 + 156 a_1 a_3 \kappa_1 a_0^4 +10 a_1 a_4 \kappa_0 a_0^4
 -468 a_1 \kappa_0 \kappa_2 a_0^4
 -420 a_1 \kappa_1^2 a_0^4 +686 a_2^2 a_3 a_0^4 +108 a_2^2 \kappa_1 a_0^4 +4 a_2 a_3 \kappa_0 a_0^4 -600 a_2 \kappa_0 \kappa_1 a_0^4
\right.
\\
& \quad \left.
 -142 a_3 \kappa_0^2 a_0^4
+372 \kappa_0^2 \kappa_1 a_0^4
 -500 a_1^3 a_4 a_0^3
-126 a_1^3 \kappa_2 a_0^3 -3316 a_1^2 a_2 a_3 a_0^3 -240 a_1^2 a_2 \kappa_1 a_0^3 +86 a_1^2 a_3 \kappa_0 a_0^3 +948 a_1^2 \kappa_0 \kappa_1 a_0^3
\right.
\\
& \quad \left.
-1760 a_1 a_2^3 a_0^3
 +168 a_1 a_2^2 \kappa_0 a_0^3 +600 a_1 a_2 \kappa_0^2 a_0^3
-248 a_1 \kappa_0^3 a_0^3 +1899 a_1^4 a_3 a_0^2 +48 a_1^4 \kappa_1 a_0^2 +6000 a_1^3 a_2^2 a_0^2
-522 a_1^3 a_2 \kappa_0 a_0^2
\right.
\\
& \quad \left.
-474 a_1^3 \kappa_0^2 a_0^2 -5310 a_1^5 a_2 a_0 +264 a_1^5 \kappa_0 a_0 +1375 a_1^7
\right)\tfrac{h^7}{120960 a_0^7},\\
\EE_{0}^{8} & = 1 - \tfrac{\kappa_{0}h^{2}}{2a_{0}}
+\left(-\kappa_{{1}}a_{{0}}
+2\,a_{{1}}\kappa_{{0}}\right){\tfrac {{h}^{3}}{6{a_{{0}}}^{2}}}
+\left(-\kappa_{{2}}{a_{{0}}}^{2}
+3\,a_{{1}}\kappa_{{1}}a_{{0}}
+3\,a_{{2}}\kappa_{{0}}a_{{0}}
+{\kappa_{{0}}}^{2}a_{{0}}
-6\,{a_{{1}}}^{2}\kappa_{{0}}\right){\tfrac {{h}^{4}}{24{a_{{0}}}^{3}}}
+ \left(-\kappa_{{3}}{a_{{0}}}^{3}
+4\,a_{{1}}\kappa_{{2}}{a_{{0}}}^{2}
\right.
\\
& \quad
\left.
+6\,a_{{2}}\kappa_{{1}}{a_{{0}}}^{2}
+4\,a_{{3}}\kappa_{{0}}{a_{{0}}}^{2}
+4\,\kappa_{{0}}\kappa_{{1}}{a_{{0}}}^{2}
-12\,{a_{{1}}}^{2}\kappa_{{1}}a_{{0}}
-24\,a_{{1}}a_{{2}}\kappa_{{0}}a_{{0}}
-6\,a_{{1}}{\kappa_{{0}}}^{2}a_{{0}}
+24\,{a_{{1}}}^{3}\kappa_{{0}}
\right) {\tfrac {{h}^{5}}{120\,{a_{{0}}}^{4}}}
 + \left(-\kappa_{{4}}{a_{{0}}}^{4}
+5\,a_{{1}}\kappa_{{3}}{a_{{0}}}^{3}
\right.
\\
& \left. \quad
+10\,a_{{2}}\kappa_{{2}}{a_{{0}}}^{3}
+10\,a_{{3}}\kappa_{{1}}{a_{{0}}}^{3}
+5\,a_{{4}}\kappa_{{0}}{a_{{0}}}^{3}
+7\,\kappa_{{0}}\kappa_{{2}}{a_{{0}}}^{3}
+4\,{\kappa_{{1}}}^{2}{a_{{0}}}^{3}
-20\,{a_{{1}}}^{2}\kappa_{{2}}{a_{{0}}}^{2}
-60\,a_{{1}}a_{{2}}\kappa_{{1}}{a_{{0}}}^{2}
-40\,a_{{1}}a_{{3}}\kappa_{{0}}{a_{{0}}}^{2}
\right.
\\
& \quad \left.
-31\,a_{{1}}\kappa_{{0}}\kappa_{{1}}{a_{{0}}}^{2}
-30\,{a_{{2}}}^{2}\kappa_{{0}}{a_{{0}}}^{2}
-13\,a_{{2}}{\kappa_{{0}}}^{2}{a_{{0}}}^{2}
-{\kappa_{{0}}}^{3}{a_{{0}}}^{2}
+60\,{a_{{1}}}^{3}\kappa_{{1}}a_{{0}}
-120\,{a_{{1}}}^{4}\kappa_{{0}}
+180\,{a_{{1}}}^{2}a_{{2}}\kappa_{{0}}a_{{0}}
+36\,{a_{{1}}}^{2}{\kappa_{{0}}}^{2}a_{{0}}
\right){\tfrac {{h}^{6}}{720\,{a_{{0}}}^{5}}}
\\
& \quad
+ \left(
-\kappa_5 a_0^5 +6a_1 \kappa_4 a_0^4
+15a_2 \kappa_3 a_0^4
+20a_3 \kappa_2 a_0^4
+15a_4 \kappa_1 a_0^4
+6a_5 \kappa_0 a_0^4
+11\kappa_0 \kappa_3 a_0^4 +15\kappa_1 \kappa_2 a_0^4
-30a_1^2 \kappa_3 a_0^3 -120a_1 a_2 \kappa_2 a_0^3
\right.
\\
& \quad \left.
-120a_1 a_3 \kappa_1 a_0^3
-60a_1 a_4 \kappa_0 a_0^3 -66a_1 \kappa_0 \kappa_2 a_0^3 -39a_1 \kappa_1^2 a_0^3
-90a_2^2 \kappa_1 a_0^3
-120a_2 a_3 \kappa_0 a_0^3 -81a_2 \kappa_0 \kappa_1 a_0^3
-24a_3 \kappa_0^2 a_0^3 -9\kappa_0^2 \kappa_1 a_0^3
\right.
\\
& \quad \left.
+120a_1^3 \kappa_2 a_0^2
+540a_1^2 a_2 \kappa_1 a_0^2 +360a_1^2 a_3 \kappa_0 a_0^2
+228a_1^2 \kappa_0 \kappa_1 a_0^2
+540a_1 a_2^2 \kappa_0 a_0^2
+192 a_1 a_2 \kappa_0^2 a_0^2 +12a_1 \kappa_0^3 a_0^2
-360a_1^4 \kappa_1 a_0
\right.
\\
& \quad \left.
-1440a_1^3 a_2 \kappa_0 a_0 -240a_1^3 \kappa_0^2 a_0 +720a_1^5 \kappa_0
\right)\tfrac{h^7}{5040a_0^6} +
\left(
-\kappa_6 a_0^6
+7a_1 \kappa_5 a_0^5 +21a_2 \kappa_4 a_0^5 +35a_3 \kappa_3 a_0^5 +35a_4 \kappa_2 a_0^5 +21a_5 \kappa_1 a_0^5
\right.
\\
& \quad \left.
+7a_6 \kappa_0 a_0^5 +16\kappa_0 \kappa_4 a_0^5
+26\kappa_1 \kappa_3 a_0^5
+15\kappa_2^2 a_0^5
-42a_1^2 \kappa_4 a_0^4
-210a_1 a_2 \kappa_3 a_0^4 -280a_1 a_3 \kappa_2 a_0^4 -210a_1 a_4 \kappa_1 a_0^4
-84a_1 a_5 \kappa_0 a_0^4
\right.
\\
& \quad \left.
-122a_1 \kappa_0 \kappa_3 a_0^4
-174a_1 \kappa_1 \kappa_2 a_0^4
-210a_2^2 \kappa_2 a_0^4 -420a_2 a_3 \kappa_1 a_0^4 -210a_2 a_4 \kappa_0 a_0^4
-202a_2 \kappa_0 \kappa_2 a_0^4
-120a_2 \kappa_1^2 a_0^4 -140a_3^2 \kappa_0 a_0^4
\right.
\\
& \quad \left.
-40a_4 \kappa_0^2 a_0^4
-174a_3 \kappa_0 \kappa_1 a_0^4
  -22\kappa_0^2 \kappa_2 a_0^4
-28\kappa_0 \kappa_1^2 a_0^4
+210a_1^3 \kappa_3 a_0^3 +1260a_1^2 a_2 \kappa_2 a_0^3 +1260a_1^2 a_3 \kappa_1 a_0^3
+630a_1^2 a_4 \kappa_0 a_0^3
\right.
\\
& \quad \left.
+345a_1^2 \kappa_1^2 a_0^3
+572a_1^2 \kappa_0 \kappa_2 a_0^3
+1890a_1 a_2^2 \kappa_1 a_0^3 +2520a_1 a_2 a_3 \kappa_0 a_0^3 +1422a_1 a_2 \kappa_0 \kappa_1 a_0^3
+418a_1 a_3 \kappa_0^2 a_0^3 +130a_1 \kappa_0^2 \kappa_1 a_0^3
\right.
\\
& \quad \left.
+630a_2^3 \kappa_0 a_0^3
+303a_2^2 \kappa_0^2 a_0^3 +34a_2 \kappa_0^3 a_0^3
+\kappa_0^4 a_0^3 -840a_1^4 \kappa_2 a_0^2
-5040a_1^3 a_2 \kappa_1 a_0^2 -3360a_1^3 a_3 \kappa_0 a_0^2
-1800a_1^3 \kappa_0 \kappa_1 a_0^2
\right.
\\
& \quad \left.
-7560a_1^2 a_2^2 \kappa_0 a_0^2
-2280a_1^2 a_2 \kappa_0^2 a_0^2 -120a_1^2 \kappa_0^3 a_0^2
+2520a_1^5 \kappa_1 a_0 +12600a_1^4 a_2 \kappa_0 a_0 +1800a_1^4 \kappa_0^2 a_0
-5040a_1^6 \kappa_0
\right)\tfrac{h^{8}}{40320 a_0^7},\\
%
%
%
\EF_0^6(h) & = \tfrac{1}{2a_0} - \tfrac{a_1 h}{3 a_0^2} + \left(-3a_2 a_0 -\kappa_0 a_0 +6a_1^2\right)\tfrac{h^2}{24a_0^3} + \left(-4a_3 a_0^2 -3\kappa_1 a_0^2 +24a_1 a_2 a_0 +6a_1 \kappa_0 a_0 -24a_1^3\right)\tfrac{h^3}{120a_0^4}
+ \left(-5a_4 a_0^3 -6\kappa_2 a_0^3
\right.
\\
& \quad \left.
+40a_1 a_3 a_0^2 +23a_1 \kappa_1 a_0^2 +30a_2^2 a_0^2 +13a_2 \kappa_0 a_0^2 +\kappa_0^2 a_0^2 -180a_1^2 a_2 a_0 -36a_1^2 \kappa_0 a_0 +120a_1^4\right)\tfrac{h^{4}}{720 a_0^5} + \left(-3a_0^4 a_5 -5a_0^4 \kappa_3
\right.
\\
& \quad \left.
 +30a_0^3 a_1 a_4 +28a_0^3 a_1 \kappa_2
+60a_0^3 a_2 a_3 +30a_0^3 a_2 \kappa_1
+12a_0^3 a_3 \kappa_0 +4a_0^3 \kappa_0 \kappa_1
-180a_0^2 a_1^2 a_3 -84a_0^2 a_1^2 \kappa_1 -270a_0^2 a_1 a_2^2
-6a_0^2 a_1 \kappa_0^2
\right.
\\
& \quad \left.
-96a_0^2 a_1 a_2 \kappa_0
 +720a_0 a_1^3 a_2 +120a_0 a_1^3 \kappa_0 -360a_1^5\right)\tfrac{h^5}{2520 a_0^6} + \left(-7a_0^5 a_6 -15a_0^5 \kappa_4
+84a_0^4 a_1 a_5 +110a_0^4 a_1 \kappa_3
 +210a_0^4 a_2 a_4
\right.
\\
& \quad \left.
+171a_0^4 a_2 \kappa_2 +140a_0^4 a_3^2
+129a_0^4 a_3 \kappa_1 +40a_0^4 a_4 \kappa_0
+21a_0^4 \kappa_0 \kappa_2 +18 a_0^4 \kappa_1^2 -630a_0^3 a_1^2 a_4
-482a_0^3 a_1^2 \kappa_2
-2520a_0^3 a_1 a_2 a_3
\right.
\\
& \quad \left.
-1047a_0^3 a_1 a_2 \kappa_1
-418a_0^3 a_1 a_3 \kappa_0
-115a_0^3 a_1 \kappa_0 \kappa_1-630a_0^3 a_2^3 -303a_0^3 a_2^2 \kappa_0 -34a_0^3 a_2 \kappa_0^2 -a_0^3 \kappa_0^3 +3360a_0^2 a_1^3 a_3
+1320a_0^2 a_1^3 \kappa_1
\right.
\\
& \quad \left.
+7560a_0^2 a_1^2 a_2^2
+2280a_0^2 a_1^2 a_2 \kappa_0 +120a_0^2 a_1^2 \kappa_0^2 -12600a_0 a_1^4 a_2 -1800a_0 a_1^4 \kappa_0 +5040a_1^6\right)\tfrac{h^{6}}{40320 a_0^7},\\
\EF_1^5(h) & = \tfrac{1}{6 a_0} - \tfrac{a_1 h}{8 a_0^2} + \left(-6a_2 a_0 - \kappa_0 a_0 + 12 a_1^2\right)\tfrac{h^2}{120 a_0^3} + \left(-5a_3 a_0^2 -2\kappa_1 a_0^2 +30a_1 a_2 a_0 +4a_1 \kappa_0 a_0 -30a_1^3\right)\tfrac{h^3}{360 a_0^4} +
\left(-15a_0^3 a_4
\right.
\\
& \quad \left.
 -10a_0^3 \kappa_2 +120a_0^2 a_1 a_3 +39a_0^2 a_1 \kappa_1 +90a_0^2 a_2^2 +21a_0^2 a_2 \kappa_0
+a_0^2 \kappa_0^2 -540a_0 a_1^2 a_2 -60a_0 a_1^2 \kappa_0 +360 a_1^4\right)\tfrac{h^{4}}{5040 a_0^5}
\\
& \quad
+ \left(-21a_0^4 a_5 -20a_0^4 \kappa_3 +210a_0^3 a_1 a_4 +115a_0^3 a_1 \kappa_2
+420a_0^3 a_2 a_3 +120a_0^3 a_2 \kappa_1
+45a_0^3 a_3 \kappa_0
+10a_0^3 \kappa_0 \kappa_1 -1260a_0^2 a_1^2 a_3
\right.
\\
& \quad \left.
-345a_0^2 a_1^2\kappa_1 -1890a_0^2 a_1 a_2^2
-375a_0^2 a_1 a_2 \kappa_0 -15a_0^2 a_1 \kappa_0^2 +5040a_0 a_1^3 a_2 +480a_0 a_1^3 \kappa_0
 -2520 a_1^5\right)\tfrac{h^{5}}{40320 a_0^6},\\
\EF_2^4(h) & = \tfrac{1}{24 a_0} - \tfrac{a_1 h}{30 a_0^2} + \left(-10a_2 a_0 - \kappa_0 a_0 + 20 a_1^2\right)\tfrac{h^2}{720 a_0^3} + \left(-4a_0^2 a_3 -a_0^2 \kappa_1 +24a_0 a_1 a_2 +2a_0 a_1 \kappa_0
-24 a_1^3\right)\tfrac{h^{3}}{1008a_0^4}
+ \left(-35a_0^3 a_4
\right.
\\
& \quad \left.
-15a_0^3 \kappa_2 +280a_0^2 a_1 a_3 +59a_0^2 a_1 \kappa_1 +210a_0^2 a_2^2 +31a_0^2 a_2 \kappa_0
+a_0^2 \kappa_0^2 -1260a_0 a_1^2 a_2 -90a_0 a_1^2 \kappa_0
+840 a_1^4\right)\tfrac{h^{4}}{40230 a_0^5},
\\
\EF_3^3(h) & = \tfrac{1}{120 a_0}  - \tfrac{a_1 h}{144 a_0^2}
+\left(-15a_0 a_2 -a_0 \kappa_0 +30 a_1^2
\right)\tfrac{h^2}{5040 a_0^3} + \left(
-35a_0^2 a_3 -6a_0^2 \kappa_1 +210a_0 a_1 a_2 +12a_0 a_1 \kappa_0
-210 a_1^3\right)\tfrac{h^{3}}{40320 a_0^4},\\
\EF_4^2(h) & = \tfrac{1}{720 a_0} - \tfrac{a_1 h}{840 a_0^2}
+ \left(-21a_0 a_2 -a_0 \kappa_0 +42 a_1^2\right)\tfrac{h^2}{40320 a_0^3},
\qquad \EF_5^1(h)  = \tfrac{1}{5040 a_0} - \tfrac{a_1 h}{5760 a_0^2},\qquad
\EF_6^0(h)  = \tfrac{1}{40320 a_0}.
\end{align*}
}

Define $\EE_{0,\pm}^n, \EE_{1,\pm}^n, \EF_{\ell,\pm}^n$ for $n\in \NN$ to be just $\EE_{0}^n, \EE_{1}^n, \EF_{\ell}^n$ as in \eqref{E01M}, respectively but using $a_j=a^{(j)}(\xb\pm)$,
$\kappa_j=[\kappa^2]^{(j)}(\xb\pm)$ and  $f_j=f^{(j)}(\xb\pm)$.
Let $M=\tilde{M}\in 2\N$. For the left boundary condition $\mathcal{B}^+u(\xb)=\gl_0 u(\xb+)+\gl_1 u'(\xb+)$, we deduce from \eqref{Lhfd:bdry} of \cref{thm:bdry} that
\[
c_0^{\mathcal{B}^+}(h) u(\xb)+c_1^{\mathcal{B}^+}(h) u(\xb+h)=
h \mathcal{B}^+ u(\xb)+\sum_{\ell=0}^{M-2} d^{\mathcal{B}^+}_\ell(h) h^{\ell+2} f^{(\ell)}(\xb+)+\bo(h^{M+1}),
\]
as $h\to 0$, where
one particular choice of $c^{\mathcal{B}^+}_1$, $c^{\mathcal{B}^+}_0$, and $d^{\mathcal{B}^+}_\ell$ for $\ell=0,\ldots, M-2$ satisfying \eqref{cbo:bdry} and \eqref{dcoeff:bdry} of \cref{thm:bdry} are given by
\be \label{cbo:bdry:special}
\begin{split}
&c^{\mathcal{B}^+}_{1}(h):=\gl_1 \EE_{1,+}^{M-1}(h),\qquad
c^{\mathcal{B}^+}_{0}(h):= h\gl_0
-\gl_1 \EE_{1,+}^{M-1}(h)\EE_{0,+}^{M}(h),\\
&d^{\mathcal{B}^+}_\ell(h):=\gl_1 \EE_{1,+}^{M-1}(h)\EF_{\ell,+}^{M-\ell-2}(h),\qquad \ell=0,\ldots, M-2.
\end{split}
\ee
%
Similarly,
let the boundary condition at $\xb$ for the left side of $\xb$ be given by
\be \label{bdryatxb:left}
\mathcal{B}^- u(\xb):=\gl_0 u(\xb-)+\gl_1 u'(\xb-) \quad \mbox{with}\quad \gl_0, \gl_1\in \C.
\ee
By symmetry, we observe that the discretization
at the base point $\xb$ from the left side of $\xb$ is
\be \label{Lhfd:bdry:left}
c_{-1}^{\mathcal{B}^-}(h)u(\xb-h) +c_{0}^{\mathcal{B}^-}(h) u(\xb)
=h \mathcal{B}^- u(\xb)
+\sum_{\ell=0}^{M-2} d_{\ell}^{\mathcal{B}^-}(h) h^{\ell+2} f^{(\ell)}(\xb-)+\bo(h^{M+1}),
\ee
as $h\to 0$, which satisfies the corresponding relations in \eqref{numdispersion:bdry} and \eqref{accuracy:bdry} if
\be \label{cbo:bdry:special:left}
\begin{split}
&c^{\mathcal{B}^-}_{-1}(h):=-\gl_1 \EE_{1,-}^{M-1}(-h),\qquad
c^{\mathcal{B}^-}_{0}(h):= h\gl_0
+\gl_1 \EE_{1,-}^{M-1}(-h)\EE_{0,-}^{M}(-h),\\
&d^{\mathcal{B}^-}_\ell(h):=(-1)^{\ell+1} \gl_1 \EE_{1,-}^{M-1}(-h)\EF_{\ell,-}^{M-\ell-2}(-h),\qquad \ell=0,\ldots, M-2.
\end{split}
\ee
%
%
%
For the stencil used at the breaking/branch point $\xb$ such that $w(\delta_{\xb})$ is the weight of the Dirac distribution $\delta_{\xb}$ of the source term $f$, we deduce from \eqref{cbo:bdry:special} and \eqref{cbo:bdry:special:left} with $\lambda_0=0$ and $\lambda_1=1$
that
{\footnotesize{
\begin{align}\label{join:cond:stencil}
& -\alpha
\EE_{1,-}^{M-1}(-h) u(\xb-h)+
\left[
\alpha
\EE_{1,-}^{M-1}(-h) \EE_{0,-}^M(-h)+
\beta
\EE_{1,+}^{M-1}(h) \EE_{0,+}^M(h)\right] u(\xb)-
\beta
\EE_{1,+}^{M-1}(h)u(\xb+h)\\ \nonumber
& \quad = -h \gamma  w(\delta_{\xb})
-\sum_{\ell=0}^{M-2} h^{\ell+2}[
\beta \EE_{1,+}^{M-1}(h) \EF_{\ell,+}^{M-\ell-2}(h)
f^{(j)}(\xb+)+\alpha (-1)^\ell \EE_{1,-}^{M-1}(-h) \EF_{\ell,-}^{M-\ell-2}(-h)f^{(j)}(\xb-)] +\bo(h^{M+1}),
\end{align}
}}
as $h\to 0$, where $\alpha:=\tfrac{2a(\xb-)}{a(\xb+)+a(\xb-)}$,
$\beta:=\tfrac{2a(\xb+)}{a(\xb+)+a(\xb-)}$ and
$\gamma:=\frac{2}{a(\xb+)+a(\xb-)}$.

Finite difference schemes with lower accuracy orders $M=2,4,6$ can be easily obtained by truncating the above given $\EE_0^8, \EE_1^7, \EF_\ell^{6-\ell}$ accordingly. In the above compact FDM with accuracy order $M$ with $M=\tilde{M}\in 2\N$, we only need $a,a',\ldots,a^{(M-1)}, \kappa^2, [\kappa^2]',\ldots,[\kappa^2]^{(M-2)}$ and $f,f',\ldots,f^{(M-2)}$.

\section{Convergence of DAT Using Compact FDMs}
\label{sec:convergence}

In this section, we discuss the convergence of DAT in \cref{sec:dat} using compact FDMs described in \cref{thm:interior,thm:bdry} of \cref{sec:fdm}.
Let us first outline the notations and assumptions for our discussion in this section and for our numerical experiments in the next section.
Let $0=b_0<b_1<\cdots<b_{p}<b_{p+1}=1$ with $p\in \N\cup\{0\}$.
The coefficients $a, \kappa^2$ and $f$ in \eqref{heqnL} are piecewise smooth in the sense that they have uniformly continuous derivatives of orders up to $n_M$ on $(b_j, b_{j+1})$ for all $j=0,\ldots, p$ for certain given integer $n_M\in \N$ (see \cref{sec:fdm} for details). Note that these coefficients may be discontinuous on $(0,1)$ and we call the points $b_1,\ldots,b_{p}$ breaking/branch points. For simplicity of discussion, we assume a Dirichlet boundary condition at 0, while a Dirichlet, Neumann, or Robin boundary condition at 1. Let $\eu$ be the exact weak solution to \eqref{heqnL} with the boundary conditions in \eqref{heqnbdry}. Let $N\in \N$ and $0=x_0<x_1<\ldots<x_{N-1}<x_N=1$ for the computational mesh with the average mesh size $h:=N^{-1}$.
Let $\{u_N(x_j)\}_{j=0}^N$ be the approximated solution on knot points $\{x_j\}_{j=0}^N$. To study theoretical convergence rates and to evaluate the performance of DAT, we define
\be \label{def:2:0}
\|u_N-\eu\|_{\infty}:=\max_{0\le j \le N} |u_N(x_j)-\eu(x_j)|,
\quad
\|u_N-\eu\|_{2}^2:=
\sum_{j=0}^{N} h_j |u_N(x_j)-\eu(x_j)|^2
\ee
with  $h_j:=x_{j+1}-x_j$ and $x_{N+1}:=1$. Because $\|u_N-\eu\|_{2}\le \|u_N-\eu\|_{\infty}$ and
$\|u_N'-\eu'\|_{2}\le \|u_N'-\eu'\|_{\infty}$ always hold, we shall only discuss the convergence in $\infty$-norm instead of $2$-norm.
Throughout this section, positive constants $C, C_1, C_2$ are always independent of both matrix size $N$ and mesh size $h$. The computational mesh is assumed to be quasi-uniform, i.e., there exists $C>0$ independent of $h$ such that
$C^{-1} h\le h_j\le C h$ for all $j=0,\ldots,N$.
Note that the weak solution $\eu \in H^1(0,1)$ but $\eu'$ may be discontinuous at branch points on $(0,1)$, because the coefficients $a, \kappa^2, f$ are only piecewise smooth.
For convenience, every branch point $b_j$ is assumed to be a grid/knot point and the mesh on each piece $(b_j, b_{j+1})$ is uniform for all $j=0,\ldots, p$.
These restrictions could be dropped as we already discussed in \cref{subsec:fdm:piece} but they make our discussion here and implementation in \cref{sec:examples} much simpler.

For an $M$th order compact FDM in \cref{sec:fdm} with $\tilde{M}=M\in \N$, the stencil at $x_j$:
\be \label{fdm:stencil}
c_{j,-1}(h) u(x_{j-1})+c_{j,0}(h) u(x_j)+c_{j,1}(h) u(x_{j+1})=F_j(h),\qquad j=1,\ldots,N
\ee
is given in \cref{thm:interior,thm:bdry} as follows:
\begin{enumerate}
	\item[(1)] If $x_j$ is neither a branch point nor a boundary point, then \eqref{fdm:stencil} is given by \eqref{Lhfd} with \eqref{cbo} and \eqref{dcoeff} under the normalization condition $\alpha(0)=-1$. Because we impose a Dirichlet boundary condition at $0$, the known term $c_{1,-1}(h) u(x_0)$ is moved to and combined with $F_j$.
	\item[(2)] If $x_j$ is a branch point, then \eqref{fdm:stencil} is given by \eqref{join:cond} or more explicitly \eqref{join:cond:stencil}.
	\item[(3)] For $x_N$ (right boundary point), \eqref{fdm:stencil} is given by \eqref{Lhfd:bdry:left} with \eqref{cbo:bdry:special:left}. Note that $c_{N,N+1}(h)=0$. If we impose a Dirichlet boundary condition at $1$, the known term $c_{N-1,1}(h) u(x_N)$ is moved to and combined with $F_j$ such that the last term in \eqref{fdm:stencil} is $N-1$ (instead of $N$).
\end{enumerate}

The approximated numerical solution $u_N=\{u_N(x_j)\}_{j=0}^N$ is obtained by solving the linear system in \eqref{fdm:stencil} with $u=u_N$.
By \cref{thm:interior,thm:bdry}, the exact solution $\eu$ must satisfy
\be \label{fdm:stencil:ue}
c_{j,-1}(h) \eu(x_{j-1})+c_{j,0}(h) \eu(x_j)+c_{j,1}(h) \eu (x_{j+1}) =F_j(h)+R_j(h),\qquad j=1,\ldots,N,
\ee
where the local truncation error functions $R_j(h)$ resulted from Taylor approximation satisfy
\be \label{Rj}
|R_j(h)|\le
\begin{cases}
	C h^{M+2}, &\text{if $x_j\not \in \{b_0,\ldots, b_{p+1}\}$, i.e., $x_j$ is an interior point},\\
	C h^{M+1}, &\text{if $x_j\in \{b_0,\ldots, b_{p+1}$\}, i.e., $x_j$ is a branch point or a boundary point},
\end{cases}
\ee
where the constant $C$ is independent of $h$ and only depends on derivatives of $\eu, a,\kappa^2$ and $f$.
The error is then defined by $Q_j:=u_N(x_j)-\eu(x_j)$ at the knot point $x_j$ for $j=1,\ldots,N$.
By \eqref{fdm:stencil:ue} and  \eqref{Rj},
\be \label{stencil:not}
c_{j,-1}(h) Q_{j-1}+c_{j,0}(h) Q_j+c_{j,1}(h) Q_{j+1}= R_j(h),\qquad j=1,\ldots,N,
\ee
which can be put together into the following matrix form
\be \label{error:est}
A(h) \vec{Q} = \vec{R}(h)
\quad \mbox{with}\quad \vec{Q}:=[Q_1,\ldots,Q_N]^\tp, \vec{R}(h):=[R_1,\ldots,R_N]^\tp,
\ee
where $A(h)$ is an $N\times N$ tridiagonal matrix defined by
\begin{equation} \label{Ah}
A(h)=\text{tridiag}(\{c_{j,-1}(h)\}_{j=2}^{N},\{c_{j,0}(h)\}_{j=1}^{N},\{c_{j,1}(h)\}_{j=1}^{N-1}).
\end{equation}
Setting $h=0$, we have
a related $N\times N$ constant tridiagonal matrix $A(0)$ given by
\begin{equation} \label{A0}
A(0)=\text{tridiag}(\{c_{j,-1}(0)\}_{j=2}^{N},\{c_{j,0}(0)\}_{j=1}^{N},\{c_{j,1}(0)\}_{j=1}^{N-1}),
\end{equation}
where $\text{tridiag}(\cdot,\cdot,\cdot)$ is defined in (S2) of \cref{alg:dat} and
the entries of $A(0)$ are given as follows:
\[
\begin{cases}
	c_{j,-1}(0) = c_{j,1}(0) = -1, \quad c_{j,0}(0)=2 & \text{if $x_{j}$ is an interior point},\\
c_{j,0}(0)=2,\quad	 c_{j,-1}(0)=-\frac{2a(b_{j}-)}{a(b_{j}+) + a(b_{j}-)}, \quad c_{j,1}(0)=-2-c_{j,-1}(0),
& \text{if $x_{j}$ is a branch point},\\
	c_{0,0}(0) = 2, \quad c_{0,1}(0) = -1 & \text{if $\lambda_{1}^{L}=0$ and $\lambda_{0}^{L}=1$ in \eqref{heqnbdry}},\\
	c_{N,-1}(0) = -1, \quad c_{N,0}(0) =1, & \text{if $\lambda_{1}^{R} \neq 0$ in \eqref{heqnbdry}}.
\end{cases}
\]
If we impose a Dirichlet boundary condition at $1$, then we replace $N$ with $N-1$ in \eqref{fdm:stencil:ue}, \eqref{stencil:not}, \eqref{error:est}, \eqref{Ah}, and \eqref{A0}. Furthermore, $c_{N-1,-1}(0) = -1$ and $c_{N-1,0}(0) = 2$ in \eqref{A0}.

Next, we highlight some key issues as to why the theoretical convergence of compact FDMs in \cref{sec:fdm} for 1D heterogeneous Helmholtz equations with various boundary conditions requires a separate comprehensive treatment and will be addressed elsewhere. First, the solution stability of such Helmholtz equations is far from trivial and warrants further investigation. There are some cases in which the stability constant may exponentially rise; i.e., the solution is close to being `unstable' in some sense. In fact, the solution may become highly unstable under perturbation or even with fairly accurate approximation of boundary and source data. See \cite[Section 5.2]{GS20}. In these situations, the convergence of FDM (and any other discretization methods) is severely affected.
For illustration purposes, we mention two such cases by considering the simplest Helmholtz equation: 
\be \label{heqnconst}
u''+\kappa^2 u=f \quad \mbox{on}\quad [0,1] \quad \mbox{with}\quad u(0)=u(1)=0, \quad \mbox{a constant wave number}\; \kappa>0.
\ee
First, it is well known that solving the simplest Helmholtz equation in \eqref{heqnconst} with large wave numbers $\kappa$ is challenging, because the huge stability constant grows quickly with $\kappa^2$ and causes the pollution effect.
This requires the mesh size $h$ to be extremely small for any numerical schemes to start effectively approximating the true solution and exhibiting convergence behavior.
Second, if $\kappa=n\pi$ with $n\in \N$, then the solution $u$ to \eqref{heqnconst} is obviously not unique since $u(x)+\alpha \sin(n\pi x)$ are also solutions to \eqref{heqnconst} for all $\alpha\in \C$. Now consider \eqref{heqnconst} with $\kappa=n\pi\pm \epsilon$ with $n\in \N$ and a very small $\epsilon>0$. Though the solution to \eqref{heqnconst} is now unique and the wave number $\kappa$ is quite small, as we shall explain later, its true solution is highly unstable in some sense. One has to use a small mesh size $h$ in proportion to $\epsilon$ (which may be smaller than machine precision) for any numerical scheme to start effectively approximating the true solution and exhibiting convergence behavior. These phenomena and difficulties call for further investigation of the stability of Helmholtz equations and its relations to convergence properties of FDMs.
Because DAT can break any large problem into very small ones, the above discussion in fact shows the advantages and contributions of DAT for numerical solutions of Helmholtz equations.

Recall that for an $m\times n$ matrix $A$, the $\infty$-norm of $A$ is $\|A\|_\infty:=\sup_{1\le j\le m} \sum_{k=1}^{n} |A_{j,k}|$, which is the operator norm mapping $\ell_\infty^n$ to $\ell_\infty^m$.
In the convergence analysis of FDM, one can deduce from the identity \eqref{error:est} that
\begin{equation}\label{Ahinv}
\|\vec{Q}\|_\infty:=\sup_{1\le j\le N} |Q_j|
\le \| A(h)^{-1} \vec{R}(h)\|_\infty
\le \|A(h)^{-1}\|_\infty \|\vec{R}(h)\|_\infty.
\end{equation}
Hence, how $\|A(h)^{-1}\|_{\infty}$ behaves for small $h$ is a key issue. Even though $A(h)$ in \eqref{error:est} converges entrywise to $A(0)$ in \eqref{A0}, the invertibility of $A(h)$ and the norm estimates of $\|A(h)^{-1}\|_{\infty}$ are not immediately guaranteed by the properties of $A(0)$ in \eqref{A0}, since the size $N$ of $A(h)$ goes to $\infty$ as $h \rightarrow 0$. Furthermore, the structure of $A(h)^{-1}$ may be unknown. In stark contrast to elliptic equations, $A(h)$ may be singular or highly ill-conditioned not only for large wave numbers but also for small wave numbers.
Let us consider the simplest Helmholtz equation in \eqref{heqnconst} again and
use the standard second order FDM. Then at mesh size $h=N^{-1}$, our coefficient matrix is $A(h)=\text{tridiag}(\{-1\}_{j=2}^{N-1},\{2-\kappa^{2}h^{2}\}_{j=1}^{N-1},\{-1\}_{j=1}^{N-2})$, whose eigenvalues are known to take the following form
\[
\sigma_n:=(2-\kappa^{2}h^{2})-2\cos(nh\pi), \quad \forall n=1,\dots,N-1.
\]
Note that the $n$th eigenvalue, $\sigma_n$, vanishes and hence $\det(A(h))=0$
if
\be \label{special:kappa}
\kappa=\kappa_*(h,n), \quad \mbox{where}\quad \kappa_*(h,n):= h^{-1}\sqrt{2(1-\cos(nh \pi))}.
\ee
This situation is not encountered in the elliptic case, since all its eigenvalues $(2+\kappa^{2}h^{2})-2\cos(nh\pi)>0$ for all $n\in \N$.
Consider $\kappa=\kappa_*(2^{-7},3) \approx 9.4226$, that is, $\kappa=3\pi-\epsilon$ for some $0<\epsilon<0.0022$. Then the standard second order FDM fails to produce any solution at $h=2^{-7}$ because $\det(A(h))=0$.
For $\kappa>0$,  we define the distance $\rho_{\kappa}:= \min_{n \in \mathbb{N}}| \kappa -n\pi|$, which can be arbitrarily small for any mesh size $h$. For example,
for the mesh size $h=2^{-19}$, we see that $\rho_{\kappa}\approx 10^{-10}$ with $\kappa:=\kappa_*(2^{-19},3)\approx 9.4248$ but $\det(A(h))=0$ at $h=2^{-19}$.
Note that  the commonly used criterion $\kappa^{3/2} h=\bo(1)$ is satisfied because
$\kappa^{3/2} h \approx 5 \times 10^{-5}$ with $\kappa=\kappa_*(2^{-19},3)$ and $h=2^{-19}$. However, we need to employ an impractically small grid size for the FDM before any convergence is perceived.
The situation is exacerbated if $\kappa$ is very large and $\rho_{\kappa}$ is very small.
The foregoing point first demonstrates how we need to carefully quantify and elaborate on what `sufficiently small $h$' means for some form of convergence in the pre-asymptotic (computationally feasible) range to take place, which theoretically may be challenging (much harder than elliptic equations); and second, it refers back to an earlier key issue regarding the significance of understanding the solution's stability. For the example presented above, one can check by a direct calculation that the energy norm of the true solution is large. The theoretical convergence for 1D heterogeneous Helmholtz equations with piecewise smooth coefficients demands more sophisticated analysis due to its underlying intricacies.

Before we turn to the convergence of DAT, we discuss how to estimate $u'(\xb)$ for $\xb=x_j$ for some $0\le j\le N$ from $u(x_k):=u_N(x_k), k=0,\ldots,N$, since $u_N'$ is used in the linking problems of DAT and in the error  $\|u_N'-\eu'\|_\infty$ for measuring performance.
Assume that the numerical $u$ (i.e., $u_N$) is computed with accuracy order $M$, that is, $|u(x_k)-\eu(x_k)|\le C h^{M}$ for all $k=0,\ldots,N$ for some $C>0$ independent of $N$ and $h$.
We can estimate one-sided derivatives $u'(\xb+)$ and $u'(\xb-)$ with the same accuracy order as well. Basically, let
$\mathcal{L}^{\mathcal{B}^+}_h u(\xb)$ and $\mathcal{L}^{\mathcal{B}^-}_h u(\xb)$ be the stencils with accuracy order $M$ for boundary conditions in \eqref{bdry:breaking} through \cref{thm:bdry}. Then
\be \label{u:deriv}
\eu'(\xb+)=\mathcal{L}^{\mathcal{B}^+}_h u(\xb)+\bo(h^M),
\qquad \eu'(\xb-)=\mathcal{L}^{\mathcal{B}^-}_h u(\xb)+\bo(h^M),\qquad h\to 0,
\ee
which can be also derived from \eqref{u:deriv:0} easily.
Higher order one-sided derivatives at $\xb$ can also be estimated with accuracy order $M$ thanks to \cref{prop:span}.
Moreover, since we can obtain the one-sided derivatives of $\eu$ at all knot points with accuracy order $M$, using interpolation we can obtain a function $u(x), 0\le x\le 1$ from the computed data $\{u(x_j)\}_{j=0}^{N}$ such that $u$ accurately approximates the exact solution $\eu$ in the function setting.
The identities in \eqref{u:deriv} play a critical role in DAT to accurately estimate artificial Dirac distributions in \eqref{heqnL:3} and
\eqref{heqnL:4} for DAT.

Now we are ready to discuss the convergence of DAT. Recall that the average mesh size $h:=N^{-1}$.
Assume that the Helmholtz equation in \eqref{heqnL}--\eqref{heqnbdry} has a unique solution.
Let $N_0$ be a given integer independent of $N$ and $h$.  Now we claim that
\be \label{dat:converg}
\begin{split}
&\mbox{\emph{If all local problems in DAT are at most $N_0 \times N_0$ in size, then all the condition }}\\
&\mbox{\emph{numbers of all local problems in DAT must be uniformly bounded and DAT using the}}\\
&\mbox{\emph{$M$th order compact FDM
exhibits $\bo(h^M)$ convergence for sufficiently small $h$.}}
\end{split}
\ee
The argument is as follows. According to the theory of DAT in \cref{sec:dat}, the accuracy of DAT only depends on the accuracy of the local problem solver and the error accumulated from the tree depth and the linking problems. So, let us look at one typical local problem with grid points $\alpha=x_{L}<x_{L+1}<\cdots<x_{H-1}<x_{H}=\beta$ on $(\alpha,\beta)$.  For small $h$, as explained in \cref{sec:dat} on DAT, the boundary conditions for this typical local problem are either Dirichlet boundary conditions at both $\alpha$ and $\beta$ with at most one branch point inside $(\alpha,\beta)$, or Dirichlet boundary condition at 0 and the prescribed boundary condition as in \eqref{heqnbdry} at 1. Let $m$ be the size of this local problem. Then the relation in \eqref{error:est} still holds with $N=m$, $\vec{U}:=[U_{L+1},\ldots, U_{L+m}]^\tp$ and $\vec{R}(h)=[R_{L},\ldots, R_{L+m}]^\tp$.
Because the size $m\le N_0$,
we have $\lim_{h\to 0} \|A(h)-A(0)\|_\infty=0$, where the $m\times m$ matrix $A(0)$ is given in \eqref{A0}.
If the local mesh $\{x_L,\ldots, x_R\}$ does not contain any branch point, then $A(0)$ must be the standard $m\times m$ tridiagonal matrix generated by $[-1,2,-1]$, probably with $[A(0)]_{m,m}=1$ instead of $2$ depending on the boundary condition at $\beta$. The later matrix $A(0)$ is known to be invertible with $\det(A(0))=m+1$, or $1$ if $[A(0)]_{m,m}=1$.
Suppose now that the local mesh contains a branch point $b_j$ and the $k$th row of $A(0)$ corresponds to this branch point $b_j$.
Then the $k$th row of the standard tridiagonal matrix $A(0)$ with $[-1,2,-1]$ is replaced by
$[\frac{-2a(b_j-)}{a(b_j+)+a(b_j-)}, 2,
\frac{-2a(b_j+)}{a(b_j+)+a(b_j-)}]$.
Then $\text{det}(A(0))=2((1+m-k)a(b_j-) + k a(b_j+))(a(b_j-) + a(b_j+))^{-1}$ if a Dirichlet boundary condition is imposed at $\beta$, or $\text{det}(A(0))=2a(b_j-)(a(b_j-) + a(b_j+))^{-1}$ if a Neumann/Robin boundary condition is imposed at $\beta$. In all cases, the determinant of $A(0)$ is nonzero; thus, $A(0)$ must be an invertible matrix. Because $A(h)$ is at most $N_0\times N_0$, we conclude that
$A(h)$ is invertible for all sufficiently small $h$, $\lim_{h\to 0} \|A(h)^{-1}-A(0)^{-1}\|_\infty=0$,
 and there exists $C_1>0$ independent of $h$ such that $\|A(h)^{-1}\|_\infty \le C_1$ for all small $h>0$. Hence, the condition number of $A(h)$ is uniformly bounded for all local problems and we deduce from \eqref{error:est} and \eqref{Ahinv}  that $\|\vec{Q}\|_\infty \le C_1 \|\vec{R}(h)\|_\infty$. If we use the $M$th order compact FDM, then \eqref{Rj} must hold and hence $\|\vec{R}(h)\|_\infty \le C h^{M+1}$ for all sufficiently small $h$. Putting everything together, we proved that $\|\vec{Q}\|_\infty \le C_1 C h^{M+1}$ for convergence of all local problems in (S2) of \cref{alg:dat}.

For the linking problems, we have to estimate one-sided derivatives $u'$ for approximated solutions $u$ of all local problems. As we discussed before, this can be done by using \eqref{u:deriv} with $M$ being replaced by $M+1$, because the local problems are solved with accuracy order $M+1$ as we discussed a moment ago.
However, we cannot expect from \eqref{u:deriv} to achieve $\|\eu'-u'\|_\infty\le C_2 h^{M+1}$ with a positive constant $C_2$ independent of $h$, where $\eu$ and $u$ stand for the exact solution and approximated solution of a local problem. Note that the constant $C_2$ only depends on $a, \kappa$ and the partitioned source term $f_j=f\varphi_j$, where $\varphi_j$ is the hat function supported on $[\alpha,\beta]$ with $\varphi_j(\gamma)=1$ for some $\gamma\in [\alpha,\beta]$. However, $\beta-\alpha=\bo(h)$ due to $m\le N_0$ and hence, $\|\varphi_j'\|_\infty =\bo(h^{-1})$. Consequently, one can observe that $\|f_j^{(n)}\|_\infty \le C_3 h^{-1}$ for all $n=0,\ldots, M$, where the positive constant $C_3$ only depends on $f$ and is independent of $h$. That is, we can only expect $C_2\le C_3 h^{-1}$ and consequently,
$\|\eu'-u'\|_\infty\le C_2 h^{M+1}\le C_3 h^{M}$.
It is hard to exactly quantify how the error propagates from the deepest tree level to the surface tree level through the linking problems. Our numerical experiments seem to indicate that the linking problems do not further reduce accuracy.
Because the one-sided derivatives $u'$ can be estimated with accuracy $\bo(h^M)$,
the solution $\vec{Q}$ is expected to behave like $\|\vec{Q}\|_\infty\le C C_1 C_2 h^{M+1}\le C C_1C_3 h^M$ for sufficiently small $h$. This leads to the claim in \eqref{dat:converg}.

\section{Numerical Experiments}
\label{sec:examples}

In this section, we present several numerical experiments to illustrate the performance of DAT in \cref{sec:dat} and the developed compact FDMs in \cref{sec:fdm}. Let $\eu$ and $\{u_N(x_j)\}_{j=0}^N$ be the exact (if its analytic expression is known) and approximated solutions on knot points $\{x_j\}_{j=0}^N$ with $0=x_0<x_1<\ldots<x_{N-1}<x_N=1$, respectively.  Because $2$-norm is controlled by $\infty$-norm in \eqref{def:2:0},
we shall measure the performance in $\infty$-norm using relative errors $\frac{\|u_N-\eu\|_\infty}{\|\eu\|_\infty}$ and
$\frac{\|u_N'-\eu'\|_\infty}{\|\eu'\|_\infty}$, where $\{u_N'(x_j)\}_{j=0}^N$ are estimated from $\{u_N(x_j)\}_{j=0}^N$ through \eqref{u:deriv}.
When the analytic expression of the exact solution $\eu$ is unknown, we
calculate the relative error between two consecutive levels. Due to the pollution effect, we know that our grid size has to be at least smaller than $\|\kappa\|_\infty^{-1}$. When we perform our experiments, we initially set our grid size to be approximately $\|\kappa\|_\infty^{-1}$, refine dyadically, and only record the numerical results where a convergent behaviour is present (either with respect to the exact solution or the solution at the subsequent grid refinement). All condition numbers are approximated by using \texttt{condest} in MATLAB, after renormalizing all the diagonal entries to be one in the coefficient matrices.
The columns ``Local CN" and ``Link CN" in all tables in this section list the maximum condition numbers associated with local and link problems in DAT.
The tree level and split parameter used are denoted by $\ell$ and $s$. The default choice is $s=1$. Also, $\ell=0$ means we use FDM without DAT. All linear systems are solved by using MATLAB's backslash command.
For all examples below, we use the $M$th order compact FDMs in \Cref{subsec:fdm:6} with $M=6$ or $M=8$. To visualize the numerical performance, the vertical axis
in each convergence plot uses a base-10 log scale and the horizontal axis uses a base-2 log scale.

\subsection{A comparison with PUFEM}

\begin{example}\label{ex:constant}
	\normalfont
	Consider the model problem \eqref{heqnL}-\eqref{heqnbdry} given by
	$[a(x)u'(x)]'+\kappa^2(x) u(x)=f(x), x\in (0,1)$ with the coefficients $a=1$, $\kappa=10^{6}$, $f=\kappa^{2}\cosh(x)$, and the boundary conditions $u(0)=0$ and $u'(1)-i\kappa u(1)=0$.
	%
	%
	The exact solution has the following analytic expression
	\[
	\eu = \frac{-\kappa \sin(\kappa x)}{\kappa^{2}+1} (\sinh(1) - i \cosh(1) \kappa) e^{i\kappa} + \frac{\kappa^{2}(\cosh(x)-e^{i\kappa x})}{\kappa^{2}+1}.
	\]
	See
	\cref{table:vsPUFEM} for the numerical performance measured by $\frac{\|u_N-\eu\|_\infty}{\|\eu\|_\infty}$ and
	 $\frac{\|u_N'-\eu'\|_\infty}{\|\eu'\|_\infty}$. The errors for PUFEM are evaluated at nodal points. Because the wave number $\kappa=10^6$
	is large, to fairly compare DAT with PUFEM, all inner products in PUFEM are calculated exactly via symbolic computation to minimize possible errors due to numerical quadrature. ``Local CN'' for PUFEM lists the condition number of its coefficient matrix. All local and linking problems in DAT in \cref{table:vsPUFEM} solve at most $4\times 4$ linear systems with uniformly bounded small condition numbers.
	\cref{table:vsPUFEM} demonstrates that DAT can handle very small mesh size and the maximum condition numbers of coefficient matrices coming from all local and linking problems are much smaller than those in FDM and PUFEM by several orders of magnitude.
\end{example}

{\tiny
	\begin{center}
		\begin{tabular}{c c | c c c c | c c c c }
			\hline
			\hline
			$N$ & $\ell$ & $\frac{\|u_N-\eu\|_{\infty}}{\|\eu\|_{\infty}}$ & $\frac{\|u_N'-\eu'\|_{\infty}}{\|\eu'\|_{\infty}}$ & Local CN & Link CN & $\frac{\|u_N-\eu\|_{\infty}}{\|\eu\|_{\infty}}$ & $\frac{\|u_N'-\eu'\|_{\infty}}{\|\eu'\|_{\infty}}$ & Local CN & Link CN \\
			\hline
			\multicolumn{2}{c}{} \vline & \multicolumn{4}{c}{DAT using the compact FDM with order $M=6$} \vline & \multicolumn{4}{c}{DAT using the compact FDM with order $M=8$}\\
			\hline
			$2^{21}$ & $0$ & $1.7276 \times 10^{-1}$ & $4.0087 \times 10^{-1}$ & $1.61 \times 10^{7}$ & $-$ & $4.3849 \times 10^{-4}$ & $1.0173 \times 10^{-3}$ & $2.07 \times 10^{7}$ & $-$\\
			$2^{21}$ & $19$ & $1.7276 \times 10^{-1}$ & $4.0087 \times 10^{-1}$ & $3.23 \times 10^{1}$ & $4.18 \times 10^{2}$ & $4.3849 \times 10^{-4}$ & $1.0173 \times 10^{-3}$ & $3.23 \times 10^{1}$ & $6.27 \times 10^{1}$\\
			\hline
			$2^{22}$ & $0$ & $2.6379 \times 10^{-3}$ & $6.1212 \times 10^{-3}$ & $7.15 \times 10^{7}$ & $-$ & $1.6674 \times 10^{-6}$ & $3.8683 \times 10^{-6}$ & $8.65 \times 10^{7}$ & $-$\\
			$2^{22}$ & $20$ & $2.6379 \times 10^{-3}$ & $6.1212 \times 10^{-3}$ & $4.15 \times 10^{1}$ & $6.37 \times 10^{1}$ & $1.6671 \times 10^{-6}$ & $3.8677 \times 10^{-6}$ & $4.15 \times 10^{1}$ & $6.28 \times 10^{1}$\\
			\hline
			$2^{23}$ & $0$ & $4.0945 \times 10^{-5}$ & $9.5012 \times 10^{-5}$ & $3.51 \times 10^{8}$ & $-$ & $8.1795 \times 10^{-9}$ & $1.8977 \times 10^{-8}$ & $3.51 \times 10^{8}$ & $-$\\
			$2^{23}$ & $21$ & $4.0946 \times 10^{-5}$ & $9.5014 \times 10^{-5}$ & $4.41 \times 10^{1}$ & $6.29 \times 10^{1}$  & $1.1594 \times 10^{-8}$ & $2.6901 \times 10^{-8}$ & $4.41 \times 10^{1}$ & $6.28 \times 10^{1}$\\
			\hline
			\multicolumn{2}{c}{} \vline & \multicolumn{4}{c}{PUFEM in \cite{BS00}} \vline & \multicolumn{4}{c}{}\\
			\hline
			$2^{21}$ & $-$ & $1.2806 \times 10^{-1}$ & $5.7930 \times 10^{-1}$ & $2.09 \times 10^{7}$ & $-$ & & & & \\
			$2^{22}$ & $-$ & $3.2473 \times 10^{-2}$ & $2.1123 \times 10^{-1}$ & $8.69 \times 10^{7}$ & $-$ & & & &\\
			$2^{23}$ & $-$ & $8.1473 \times 10^{-3}$ & $8.9740 \times 10^{-2}$ & $2.38 \times 10^{8}$ & $-$ & & & &\\
			\hline
		\end{tabular}
		\captionof{table}{
			Relative errors for \cref{ex:constant} using DAT with $N_0=4$ and $s=1$ in \cref{alg:dat}, and PUFEM. The grid increment used in $[0,1]$ is $N^{-1}$.
		}
		\label{table:vsPUFEM}
	\end{center}
}

\subsection{Numerical experiments on 1D heterogeneous Helmholtz equations}

\begin{example}\label{ex:piece8}
	\normalfont
	Consider the model problem \eqref{heqnL}-\eqref{heqnbdry} given by
$[a(x)u'(x)]'+\kappa^2(x) u(x)=f(x), x\in (0,1)$ with the following piecewise smooth jumping coefficients having large variation:
	\begin{align*}
	a & =\chi_{[0,\frac{1}{8})}+10^{-1}\chi_{[\frac{1}{8},\frac{2}{8})}+\chi_{[\frac{2}{8},\frac{3}{8})}+10^{-2}\chi_{[\frac{3}{8},\frac{4}{8})}+\chi_{[\frac{4}{8},\frac{5}{8})}+10^{-3}\chi_{[\frac{5}{8},\frac{6}{8})} +\chi_{[\frac{6}{8},\frac{7}{8})}+10^{-4}\chi_{[\frac{7}{8},1]},\\
	\kappa & =10^{4}(\chi_{[0,\frac{1}{8}) \cup [\frac{2}{8},\frac{3}{8}) \cup [\frac{4}{8},\frac{5}{8}) \cup [\frac{6}{8},\frac{7}{8})}) + 500(\chi_{[\frac{1}{8},\frac{2}{8}) \cup [\frac{3}{8},\frac{4}{8}) \cup [\frac{5}{8},\frac{6}{8}) \cup [\frac{7}{8},1]}),\\
	f & = 10^{7}e^{x}(\chi_{[0,\frac{1}{8}) \cup [\frac{2}{8},\frac{3}{8}) \cup [\frac{4}{8},\frac{5}{8}) \cup [\frac{6}{8},\frac{7}{8})})-e^{-2x}(\chi_{[\frac{1}{8},\frac{2}{8}) \cup [\frac{3}{8},\frac{4}{8}) \cup [\frac{5}{8},\frac{6}{8}) \cup [\frac{7}{8},1]}),
	\end{align*}
	and the boundary conditions $u(0)=0$ and $10^{-2}u'(1)-i 500 u(1)=0$.
The exact solution $\eu$ has an analytic expression which is given
on each interval $(2^{-3}(j-1),2^{-3}j)$ for $j=1,\ldots,8$ by
{\footnotesize{\begin{align*}
\eu(x) & = A_j \exp\left(i\frac{\kappa(x)}{\sqrt{a(x)}}x\right) + B_j \exp\left(-i\frac{\kappa(x)}{\sqrt{a(x)}}x\right) + \frac{\exp\left(i\frac{\kappa(x)}{\sqrt{a(x)}}x\right)}{2i\kappa(x) \sqrt{a(x)}} \int_{2^{-3}(j-1)}^x
f(t)  \exp\left(-i\frac{\kappa(t)}{\sqrt{a(t)}}t\right) dt \\
& \quad - \frac{\exp\left({-i\frac{\kappa(x)}{\sqrt{a(x)}}x}\right)}{2i\kappa(x) \sqrt{a(x)}} \int_{2^{-3}(j-1)}^x f(t)  \exp\left({i\frac{\kappa(t)}{\sqrt{a(t)}}t}\right) dt,\qquad x\in (2^{-3}(j-1), 2^{-3} j),
\end{align*}
}}
where all the coefficients $A_j$, $B_j$ for $j=1,\dots,8$ are uniquely determined by solving a system of linear equations that arises from imposing the boundary conditions and the following transmission conditions
\[
\eu(2^{-3}j-)=\eu(2^{-3}j+), \quad a(2^{-3}j-)\eu'(2^{-3}j-)=a(2^{-3}j+)\eu'(2^{-3}j+), \qquad j=1,\dots,8.
\]
See \cref{tab:piece8rel} for the numerical performance measured by $\frac{\|u_N-\eu\|_\infty}{\|\eu\|_\infty}$ and
$\frac{\|u_N'-\eu'\|_\infty}{\|\eu'\|_\infty}$, and \cref{fig:piece8rel} for the convergence plot and approximated solution $u_N$. As can be seen from \cref{tab:piece8rel}, the convergence rates agree with the theoretical discussion in \cref{sec:fdm,sec:convergence}.
\end{example}

	{\tiny
		\begin{center}
			\begin{tabular}{c c|c c c c|c c c c c}
				\hline
                \hline
\multicolumn{2}{c}{} \vline	
&\multicolumn{4}{c}{DAT using the compact FDM with order $M=6$} \vline
&\multicolumn{4}{c}{DAT using the compact FDM with order $M=8$}\\
				\hline
$N$ & $\ell$
& $\frac{\|u_N-\eu\|_{\infty}}{\|\eu\|_{\infty}}$
& $\frac{\|u_N'-\eu'\|_{\infty}}{\|\eu'\|_{\infty}}$
& Local CN & Link CN
& $\frac{\|u_N-\eu\|_{\infty}}{\|\eu\|_{\infty}}$
& $\frac{\|u_N'-\eu'\|_{\infty}}{\|\eu'\|_{\infty}}$
& Local CN & Link CN\\
				\hline
$2^{15}$ & $0$ & $2.0833 \times 10^{-1}$ & $1.2491$
& $1.62 \times 10^{6}$ & $-$
& $8.0406 \times 10^{-3}$ & $4.8823 \times 10^{-2}$
& $1.59 \times 10^{7}$ & $-$\\

				& $7$ & $2.0833 \times 10^{-1}$ & $1.2491$
& $2.27 \times 10^4$ & $2.32 \times 10^3$
& $8.0406 \times 10^{-3}$ & $4.8823 \times 10^{-2}$
& $2.22 \times 10^{4}$ & $2.32 \times 10^{3}$\\

				& $10$ & $2.0833 \times 10^{-1}$ & $1.2491$
& $3.98 \times 10^2$ & $2.32 \times 10^3$
& $8.0406 \times 10^{-3}$ & $4.8823 \times 10^{-2}$
& $3.87 \times 10^{2}$ & $2.32 \times 10^{3}$\\

				\hline
				$2^{16}$ & $0$ & $3.5328 \times 10^{-3}$ & $2.1422 \times 10^{-2}$
& $7.55 \times 10^{6}$ & $-$
& $2.3512 \times 10^{-5}$ & $1.4404 \times 10^{-4}$
& $7.56 \times 10^{6}$ & $-$\\

				& $8$ & $3.5328 \times 10^{-3}$ & $2.1422 \times 10^{-2}$
& $1.66 \times 10^{3}$ & $2.32 \times 10^3$
& $2.3512 \times 10^{-5}$ & $1.4404 \times 10^{-4}$
& $1.66 \times 10^{3}$ & $2.32 \times 10^{3}$\\

				& $11$ & $3.5328 \times 10^{-3}$ & $2.1422 \times 10^{-2}$ & $1.32 \times 10^2$ & $2.32 \times 10^3$
& $2.3512 \times 10^{-5}$ & $1.4404 \times 10^{-4}$  & $1.32 \times 10^{2}$ & $2.32 \times 10^{3}$\\
				\hline

				$2^{17}$ & $0$ & $5.1547 \times 10^{-5}$ & $3.1264 \times 10^{-4}$  & $2.87 \times 10^{7}$ & $-$
& $8.5834 \times 10^{-8}$ & $5.2706 \times 10^{-7}$ & $2.87 \times 10^{7}$ & $-$\\

				& $9$ & $5.1547 \times 10^{-5}$ & $3.1264 \times 10^{-4}$  & $1.18 \times 10^{4}$ & $2.32 \times 10^3$
& $8.5834 \times 10^{-8}$ & $5.2705 \times 10^{-7}$  & $1.18 \times 10^{4}$ & $2.32 \times 10^{3}$\\

				& $12$ & $5.1547 \times 10^{-5}$ & $3.1264 \times 10^{-4}$ & $3.65 \times 10^{1}$ & $2.32 \times 10^3$
& $8.5834 \times 10^{-8}$ & $5.2706 \times 10^{-7}$ & $3.65 \times 10^{1}$ & $2.36 \times 10^{3}$\\
				\hline
				$2^{18}$ & $0$ &$7.9194 \times 10^{-7}$ & $4.8033 \times 10^{-6}$ & $1.14 \times 10^{8}$ & $-$
& $3.2902 \times 10^{-10}$ & $2.0239 \times 10^{-9}$  & $1.14 \times 10^{8}$ & $-$\\
				& $10$ & $7.9194 \times 10^{-7}$ & $4.8033 \times 10^{-6}$ & $1.09 \times 10^{4}$ & $2.32 \times 10^3$
& $9.3775 \times 10^{-10}$ & $2.0278 \times 10^{-9}$ & $1.09 \times 10^{4}$ & $2.32 \times 10^{3}$\\

				& $13$ & $7.9194 \times 10^{-7}$ & $4.8033 \times 10^{-6}$  & $4.28 \times 10^{1}$ & $2.32 \times 10^3$
& $3.2825 \times 10^{-10}$ & $2.0194 \times 10^{-9}$  & $4.28 \times 10^{1}$ & $2.32 \times 10^{3}$\\
				\hline
			\end{tabular}
			 \captionof{table}{Relative errors for \cref{ex:piece8} using DAT with $N_0=32$ and $s=1$ in \cref{alg:dat}. The grid increment used in each sub-interval $[(k-1)2^{-3},k2^{-3}]$ with $1\le k\le 2^3$ is $N^{-1}$.
}
			 \label{tab:piece8rel}
		\end{center}
	}

	\begin{figure}[!htb]
	 \begin{subfigure}[b]{0.3\textwidth}
	\begin{tikzpicture}
		\begin{axis}
		[
		 xlabel={\footnotesize{$N\!\!=$}},
        xlabel style ={xshift=-2.3cm,yshift=0.48cm},
		 ylabel={\footnotesize{Relative errors}},
		height=0.9\textwidth,
		width=\textwidth,
		yminorticks=false,
		ymin=10e-12, ymax=2,
		 ytick={10e-13,10e-11,10e-9,10e-7,10e-5,10e-3,1},,
		xmode=log,
		log basis x={2},
		ymode=log,
		log basis y={10},
		legend style={nodes={scale=0.6, transform shape}},
		legend pos=outer north east,
		]
		 \addplot[thick,mark=*,blue] coordinates
		{(32768,2.0833e-1)
			(65536,3.5328e-3)
			(131072,5.1547e-5)
			 (262144,7.9194e-7)};
		 \addplot[thick,mark=*,red] coordinates
		{(32768,1.2491)
			(65536,2.1422e-2)
			(131072,3.1264e-4)
			 (262144,4.8033e-6)};
		\addplot[thick, dashed, mark=*,blue] coordinates
		{(32768,8.0406e-3)
			(65536,2.3512e-5)
			(131072,8.5834e-8)
			 (262144,3.2825e-10)};
		\addplot[thick, dashed, mark=*,red] coordinates
		{(32768,4.8823e-2)
			(65536,1.4404e-4)
			(131072,5.2706e-7)
			 (262144,2.0194e-9)};
		\end{axis}
		\node (B) at (1.1,0.75) {\tiny \scalebox{0.8}{$\mathcal{O}(N^{-8})$}};
		\node (A) at (1.9,1.6) {};
		\draw[thick,->] (B) edge (A);
		\node (D) at (3.05,3) {\tiny \scalebox{0.8}{$\mathcal{O}(N^{-6})$}};
		\node (C) at (2.35,2.2) {};
		\draw[thick,->] (D) edge (C);
		\end{tikzpicture}
		\end{subfigure}
		 \begin{subfigure}[b]{0.3\textwidth}
			 \raisebox{0.1cm}{\includegraphics[width=\textwidth]{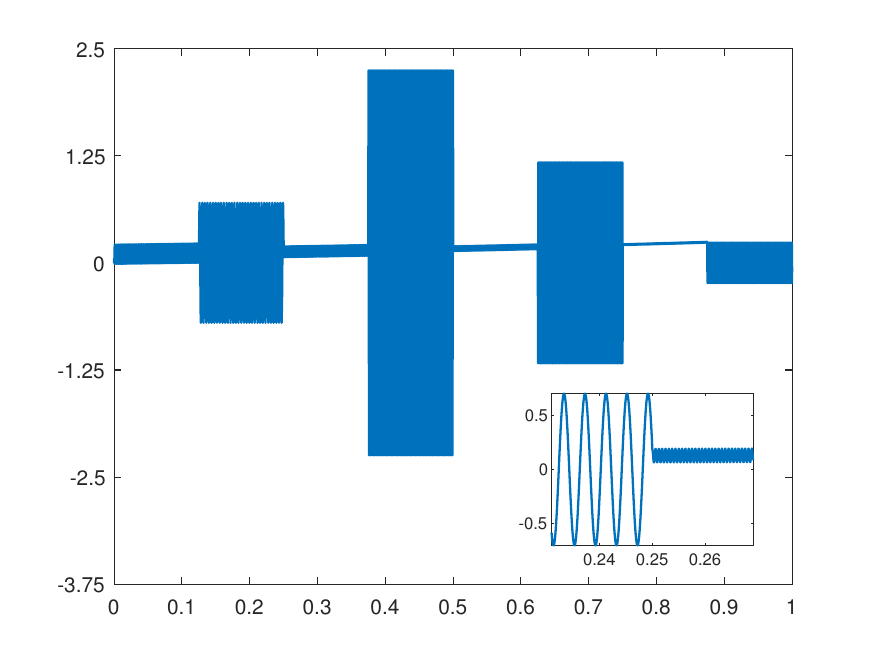}}
		\end{subfigure}
		 \begin{subfigure}[b]{0.3\textwidth}
			 \raisebox{0.1cm}{\includegraphics[width=\textwidth]{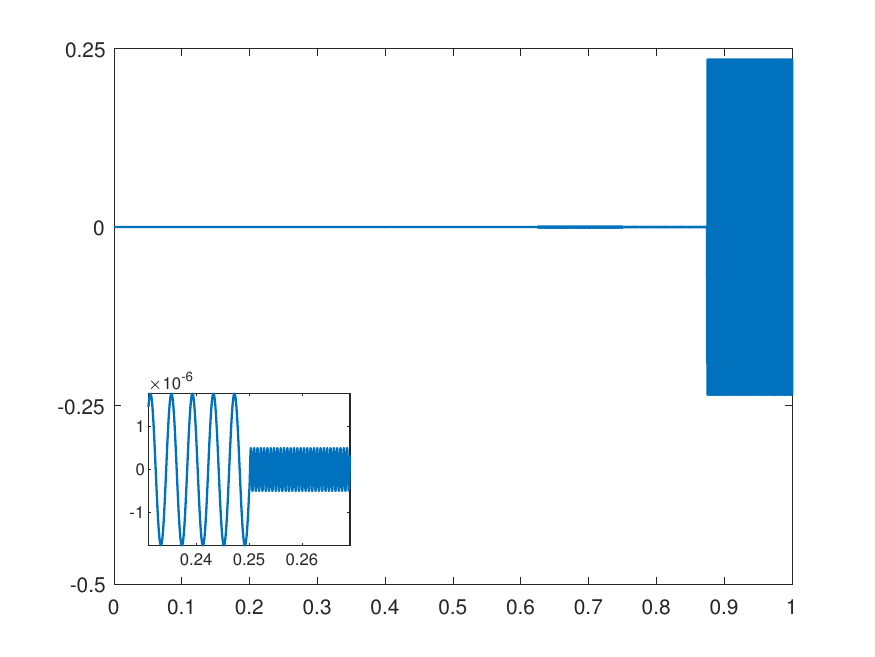}}
		\end{subfigure}
		 \caption{\cref{ex:piece8}: Convergence plot (left) of DAT using the compact FDM with order $M=6$ (solid) and $M=8$ (dashed) for errors $\frac{\|u_N-\eu\|_{\infty}}{\|\eu\|_{\infty}}$ (blue) and
		 $\frac{\|u_N'-\eu'\|_{\infty}}{\|\eu'\|_{\infty}}$ (red).
The displayed convergence rates are obtained by calculating $\log_2\left(\frac{\|u_N-\eu\|_{\infty}}{\|u_{2N}-\eu\|_{\infty}}\right)$ and $\log_2\left(\frac{\|u_N'-\eu'\|_{\infty}}
{\|u_{2N}'-\eu'\|_{\infty}}\right)$. The real (middle) and imaginary (right) parts of $u_N$ with $N=2^{18}$, $\ell=13$ and $M=8$.}
		\label{fig:piece8rel}
	\end{figure}

\begin{example} \label{ex:piece4het}
	\normalfont
	Consider
$[a(x)u'(x)]'+\kappa^2(x) u(x)=f(x), x\in (0,1)$
with the following coefficients
	\begin{align*}
	a & = e^{-x} \chi_{[0,\frac{31}{100}) \cup  [\frac{69}{100},\frac{81}{100})} + (e^{x}+1) \chi_{[\frac{31}{100},\frac{69}{100}) \cup  [\frac{81}{100},1]},\\
	\kappa & = 10^{4}e^{2x} \chi_{[0,\frac{31}{100})}
	+ 10^{5}x^4 \chi_{[\frac{31}{100},\frac{69}{100})}
	+ 10^{4}(1+x^4)\chi_{[\frac{69}{100},\frac{81}{100})}
	+ 10^{5}e^{-3x}\chi_{[\frac{81}{100},1]},\\
	f & = 10^{7} \left(\chi_{[0,\frac{31}{100})} + x^2 \chi_{[\frac{31}{100},\frac{69}{100})} + x^3 \chi_{[\frac{69}{100},\frac{81}{100})} + x^5\chi_{[\frac{81}{100},1]}\right),
	\end{align*}
and the boundary conditions $u(0)=1$ and $(e+1)^{1/2}u'(1)-i 10^5 e^{-3} u(1)=0$. The exact
solution's analytic expression is unknown.
See \cref{tab:piece4het} for the numerical performance measured by $\frac{\|u_{N}-u_{2N}\|_\infty}{\|u_{2N}\|_\infty}$ and
$\frac{\|u_N'-u_{2N}'\|_\infty}{\|u_{2N}'\|_\infty}$, and \cref{fig:piece4het} for the convergence plot and approximated solution $u_N$. As can be seen from \cref{tab:piece4het}, the convergence rates agree with the theoretical discussion in \cref{sec:fdm,sec:convergence}.
This example shows how DAT is stable with respect to splits and tree levels. For simplicity, we consider a tree level that is not high. Hence, it is to be expected that the maximum condition numbers of the local and linking problems are still relatively large, but are nonetheless smaller than the condition numbers of FDM. In fact, if we look at these condition numbers in granular detail, a large proportion of them are significantly smaller than those of FDM for any given $N$. We also note that the maximum condition numbers listed in the column ``Local CN" are the same for $(\ell,s)=(5,1)$ and $(\ell,s)=(3,2)$. The reason is because these two rows share the same local problems as defined in \eqref{localproblem:regular}. The only difference lies in the size of the linking problems: $3 \times 3$ for $(\ell,s)=(5,1)$ and $7 \times 7$ for $(\ell,s)=(3,2)$.
\end{example}

	{\tiny
		\begin{center}
			\begin{tabular}{c c | c c c c | c c c c}
				\hline
				\hline
				 \multicolumn{2}{c}{} \vline & \multicolumn{4}{c}{DAT using the compact FDM with order $M=6$} \vline & \multicolumn{4}{c}{DAT using the compact FD with order $M=8$}\\
                \hline
				$N$ & $(\ell,s)$ & $\frac{\|u_{N}-u_{2N}\|_{\infty}}{\|u_{2N}\|_{\infty}}$ & $\frac{\|u'_{N}-u'_{2N}\|_{\infty}}{\|u'_{2N}\|_{\infty}}$ & Local CN & Link CN & $\frac{\|u_{N}-u_{2N}\|_{\infty}}{\|u_{2N}\|_{\infty}}$ & $\frac{\|u'_{N}-u'_{2N}\|_{\infty}}{\|u'_{2N}\|_{\infty}}$ & Local CN & Link CN\\
				\hline
				$2^{14}$ & $(0,0)$ & $5.9033 \times 10^{-1}$ & $8.6408 \times 10^{-1}$ & $2.63 \times 10^{9}$ & $-$ &
				$9.4394 \times 10^{-2}$ & $1.2948 \times 10^{-1}$ & $4.59 \times 10^{9}$ & $-$\\
				
				& $(5,1)$ & $5.9033 \times 10^{-1}$ & $8.6408 \times 10^{-1}$ & $6.29 \times 10^{4}$ & $7.40 \times 10^{4}$ &
				$9.4394 \times 10^{-2}$ & $1.2948 \times 10^{-1}$ & $2.68 \times 10^{5}$ & $8.17 \times 10^{4} $\\
				
				& $(3,2)$ & $5.9033 \times 10^{-1}$ & $8.6408 \times 10^{-1}$ & $6.29 \times 10^{4}$ & $1.95 \times 10^{4}$ &
				$9.4394 \times 10^{-2}$ & $1.2948 \times 10^{-1}$ & $2.68 \times 10^{5}$ & $2.94 \times 10^{4}$\\
				\hline
				$2^{15}$ & $(0,0)$ & $4.7473 \times 10^{-2}$ & $6.7611 \times 10^{-2}$ & $5.19 \times 10^{6}$ & $-$ &
				$2.4465 \times 10^{-4}$ & $3.3087 \times 10^{-4}$ & $5.56 \times 10^{6}$ & $-$\\
				
				& $(5,1)$ & $4.7473\times 10^{-2}$ & $6.7611 \times 10^{-2}$ & $2.07 \times 10^{5}$ & $8.41 \times 10^{3}$ &
				$2.4465 \times 10^{-4}$ & $3.3087 \times 10^{-4}$ & $2.07 \times 10^{5}$ & $8.41 \times 10^{3}$\\
				
				& $(3,2)$ & $4.7473\times 10^{-2}$ & $6.7611 \times 10^{-2}$ & $2.07 \times 10^{5}$ & $2.90 \times 10^{4}$ &
				$2.4465 \times 10^{-4}$ & $3.3087 \times 10^{-4}$ & $2.07 \times 10^{5}$ & $2.92 \times 10^{4}$\\
				\hline
				$2^{16}$ & $(0,0)$ & $7.2618 \times 10^{-4}$ & $1.0353 \times 10^{-3}$ & $2.22 \times 10^{7}$ & $-$
				& $8.7748 \times 10^{-7}$ & $1.1834 \times 10^{-6}$ & $2.22 \times 10^{7}$ & $-$\\
				
				& $(5,1)$ & $7.2618 \times 10^{-4}$ & $1.0353 \times 10^{-3}$ & $8.29 \times 10^{5}$ & $8.41 \times 10^{3}$ &
				$8.8029 \times 10^{-7}$ & $1.1867 \times 10^{-6}$ & $8.29 \times 10^{5}$ & $8.41 \times 10^{3}$\\
				
				& $(3,2)$ & $7.2618 \times 10^{-4}$ & $1.0353 \times 10^{-3}$ & $8.29 \times 10^{5}$ & $2.92 \times 10^{4}$ &
				$8.7595 \times 10^{-7}$ & $1.1813 \times 10^{-6}$ & $8.29 \times 10^{5}$ & $2.92 \times 10^{4}$\\
				\hline
				$2^{17}$ & $(0,0)$ & $1.1182\times 10^{-5}$ & $1.5959 \times 10^{-5}$ & $8.89 \times 10^{7}$ & $-$ &
				$3.7755 \times 10^{-9}$ & $5.0250 \times 10^{-9}$ & $8.89 \times 10^{7}$ & $-$\\
				
				& $(5,1)$ & $1.1188\times 10^{-5}$ & $1.5967 \times 10^{-5}$ & $3.32 \times 10^{6}$ & $8.41 \times 10^{3}$ &
				$3.2508 \times 10^{-9}$ & $6.3200 \times 10^{-9}$ & $3.32 \times 10^{6}$ & $8.41 \times 10^{3}$ \\
				
				& $(3,2)$ & $1.1179\times 10^{-5}$ & $1.5958 \times 10^{-5}$ & $3.32 \times 10^{6}$ & $2.92 \times 10^{4}$ &
				$6.9405 \times 10^{-9}$ & $9.3269 \times 10^{-9}$ & $3.32 \times 10^{6}$ & $2.92 \times 10^{4}$\\
				\hline
			\end{tabular}	
			 \captionof{table}{Relative errors for \cref{ex:piece4het} using DAT with $N_0=16$ and $s = 1,2$ in \cref{alg:dat}. The grid increments used in $[0,\frac{31}{100}]$, $[\frac{31}{100},\frac{69}{100}]$, $[\frac{69}{100},\frac{81}{100}]$, and $[\frac{81}{100},1]$ are respectively $\frac{31}{25 N}$, $\frac{38}{25 N}$, $\frac{12}{25 N}$, and $\frac{19}{25 N}$.
			}
			 \label{tab:piece4het}
		\end{center}
	}

	\begin{figure}[hbtp]
		 \begin{subfigure}[b]{0.3\textwidth}
		\begin{tikzpicture}
		\begin{axis}
		[
		 xlabel={\footnotesize{$N\!\!=$}},
        xlabel style ={xshift=-2.3cm,yshift=0.48cm},
		 ylabel={\footnotesize{Relative errors}},
		height=0.9\textwidth,
		width=\textwidth,
		yminorticks=false,
		ymin=10e-11, ymax=10e0,
		xmode=log,
		log basis x={2},
		ymode=log,
		log basis y={10},
		 ytick={10e-10,10e-8,10e-6,10e-4,10e-2,10e-0,10e2},
		legend style={nodes={scale=0.6, transform shape}},
		legend pos=outer north east,
		]
		 \addplot[thick,mark=*,blue] coordinates
		{(16384,5.9033e-1)
			(32768,4.7473e-2)
			(65536,7.2618e-4)
			 (131072,1.1179e-5)};
		 \addplot[thick,mark=*,red] coordinates
		{(16384,8.6408e-1)
			(32768,6.7611e-2)
			(65536,1.0353e-3)
			 (131072,1.5958e-5)};
		\addplot[thick, dashed, mark=*,blue] coordinates
		{(16384,9.4394e-2)
			(32768,2.4465e-4)
			(65536,8.7595e-7)
			 (131072,6.9405e-9)};
		\addplot[thick, dashed, mark=*,red] coordinates
		{(16384,1.2948e-1)
			(32768,3.3087e-4)
			(65536,1.1813e-6)
			 (131072,9.3269e-9)};
		\end{axis}
		\node (B) at (1.1,0.75) {\tiny \scalebox{0.8}{$\mathcal{O}(N^{-8})$}};
		\node (A) at (1.9,1.6) {};
		\draw[thick,->] (B) edge (A);
		\node (D) at (3.05,3) {\tiny \scalebox{0.8}{$\mathcal{O}(N^{-6})$}};
		\node (C) at (2.35,2.2) {};
		\draw[thick,->] (D) edge (C);
		\end{tikzpicture}
		\end{subfigure}
		 \begin{subfigure}[b]{0.3\textwidth}
			 \raisebox{0.1cm}{\includegraphics[width=\textwidth]{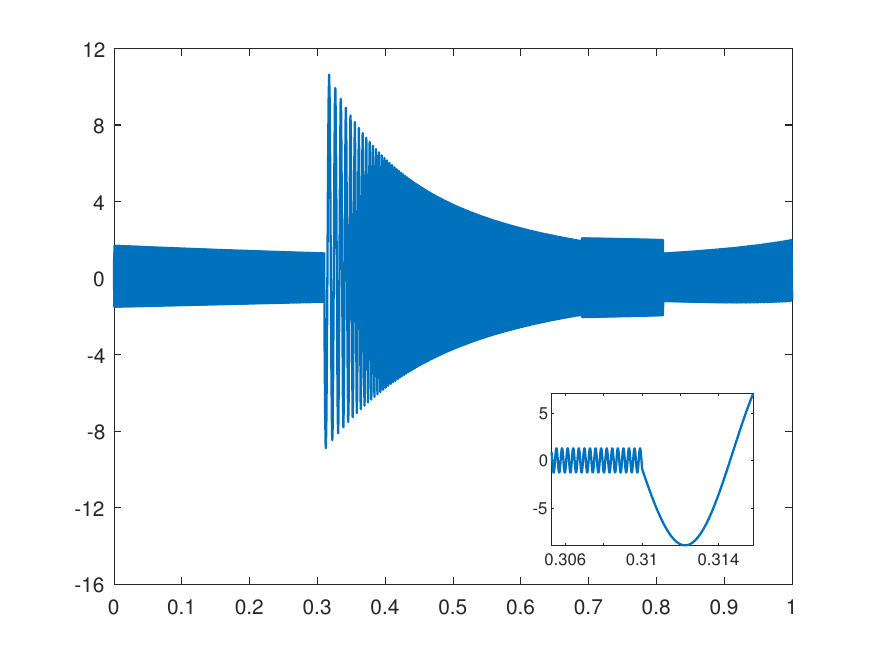}}
		\end{subfigure}
		 \begin{subfigure}[b]{0.3\textwidth}
			 \raisebox{0.1cm}{\includegraphics[width=\textwidth]{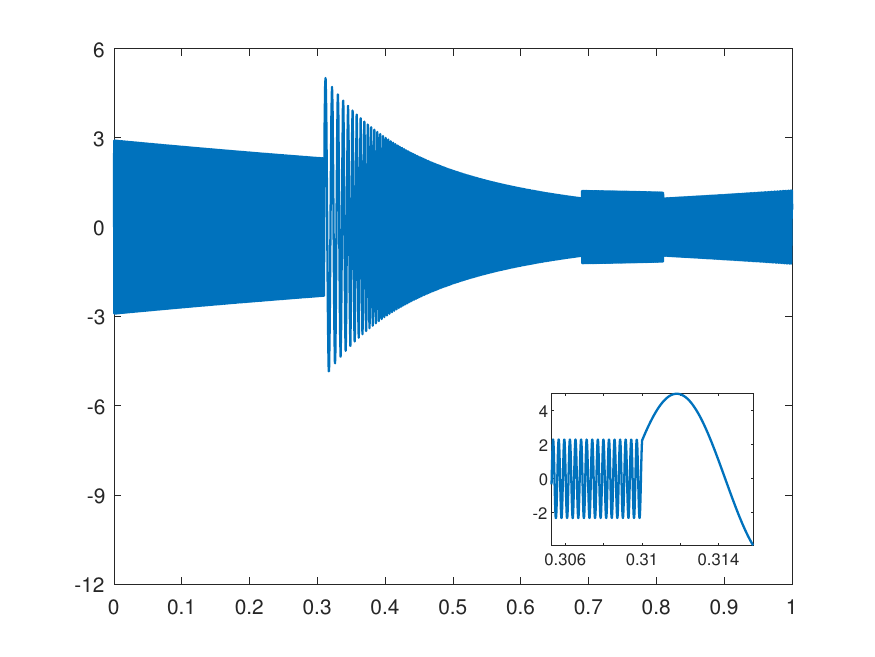}}
		\end{subfigure}
		\caption
		{\cref{ex:piece4het}:
	Convergence plot (left) of DAT using the compact FDM  with order $M=6$ (solid) and $M=8$ (dashed) for errors $\frac{\|u_N-u_{2N}\|_{\infty}}{\|u_{2N}\|_{\infty}}$ (blue) and
	 $\frac{\|u_N'-u_{2N}'\|_{\infty}}{\|u_{2N}'\|_{\infty}}$ (red). The displayed convergence rates are obtained by calculating $\log_2\left(\frac{\|u_N-u_{2N}\|_{\infty}/\|u_{2N}\|_{\infty}}
{\|u_{2N}-u_{4N}\|_{\infty}/\|u_{4N}\|_{\infty}}\right)$ and $\log_2\left(\frac{\|u_N'-u_{2N}'\|_{\infty}/\|u_{2N}'\|_{\infty} }{ \|u_{2N}'-u_{4N}'\|_{\infty}/\|u_{4N}'\|_{\infty}}\right)$. The real (middle) and imaginary (right) parts of $u_N$ with $N=2^{17}$, $(\ell,s)=(5,1)$ and $M=8$.}
		\label{fig:piece4het}
	\end{figure}

\begin{example}\label{ex:akvary}
	\normalfont
	Consider
$[a(x)u'(x)]'+\kappa^2(x) u(x)=f(x), x\in (0,1)$ with the following coefficients
	\[
	a=1.1+\sin(40\pi x), \quad \kappa=10^{5}\left(1-\left(x-0.5\right)^2\right), \quad f=10^{9}(x^{7}+1),
	\]
and the boundary conditions $\sqrt{1.1}u'(0)+i75000u(0)=-1$ and
$\sqrt{1.1}u'(1)-i 75000 u(1)=0$.
The exact
solution's analytic expression is unknown.
See \cref{tab:akvary} for the numerical performance measured by $\frac{\|u_N-u_{2N-1}\|_\infty}{\|u_{2N-1}\|_\infty}$ and
$\frac{\|u_N'-u_{2N-1}'\|_\infty}{\|u_{2N-1}'\|_\infty}$, and \cref{fig:akvary} for the convergence plot and approximated solution $u_N$. As can be seen from \cref{tab:akvary}, the convergence rates agree with the theoretical discussion in \cref{sec:fdm,sec:convergence}. As studied in \cite{GS20}, having $a$ and $\kappa$ that are oscillating and/or possess a large variation leads to an ill-conditioned coefficient matrix. This example explores DAT's potential in handling the Helmholtz problem with an oscillatory coefficient $a$ and a large wave number $\kappa$.
\end{example}

	{\tiny
		\begin{center}
			\begin{tabular}{c c | c c c c | c c c c }
				\hline
				\hline
				 \multicolumn{2}{c}{} \vline & \multicolumn{4}{c}{DAT using the compact FDM with order $M=6$} \vline & \multicolumn{4}{c}{DAT using the compact FDM with order $M=8$}\\
				\hline
				$N$ & $\ell$ & $\frac{\|u_{N}-u_{2N-1}\|_{\infty}}{\|u_{2N-1}\|_{\infty}}$ & $\frac{\|u'_{N}-u'_{2N-1}\|_{\infty}}{\|u'_{2N-1}\|_{\infty}}$ & Local CN & Link CN  & $\frac{\|u_{N}-u_{2N-1}\|_{\infty}}{\|u_{2N-1}\|_{\infty}}$ & $\frac{\|u'_{N}-u'_{2N-1}\|_{\infty}}{\|u'_{2N-1}\|_{\infty}}$ & Local CN & Link CN\\
				\hline
				$2^{18}+1$ & $0$ & $4.8707 \times 10^{-1}$ & $7.6812 \times 10^{-1}$ & $5.53 \times 10^{6}$ & $-$ &
				$6.7606 \times 10^{-3}$ & $1.0602 \times 10^{-2}$ & $5.66 \times 10^{6}$ & $-$\\
				& $16$ & $4.8707 \times 10^{-1}$ & $7.6812 \times 10^{-1}$ & $2.74 \times 10^{5}$ & $7.71 \times 10^{6}$ & $6.7606 \times 10^{-3}$ & $1.0602 \times 10^{-2}$ & $5.25 \times 10^{5}$ & $1.20 \times 10^{7}$\\ 
				
				\hline
				
				$2^{19}+1$ & $0$ & $7.1208 \times 10^{-3}$ & $1.1068 \times 10^{-2}$ & $1.47 \times 10^{7}$ & $-$ &
				$2.2709 \times 10^{-5}$ & $3.5577 \times 10^{-5}$ & $1.42 \times 10^{7}$ & $-$\\
				& $17$ & $7.1208 \times 10^{-3}$ & $1.1068 \times 10^{-2}$ & $4.39 \times 10^{1}$ & $1.16 \times 10^{7}$ & $2.2709 \times 10^{-5}$ & $3.5577 \times 10^{-5}$ & $4.39 \times 10^{1}$ & $1.19 \times 10^{7}$\\ 
				
				\hline
				
				$2^{20}+1$ & $0$ & $1.0754 \times 10^{-4}$ & $1.6712 \times 10^{-4}$ & $6.21 \times 10^{7}$ & $-$ &
				$8.6824 \times 10^{-8}$ & $1.3591 \times 10^{-7}$ & $5.98 \times 10^{7}$ & $-$\\
				& $18$ & $1.0754 \times 10^{-4}$ & $1.6712 \times 10^{-4}$ & $4.47 \times 10^{1}$ & $1.19 \times 10^{7}$ & $8.6814 \times 10^{-8}$ & $1.3589 \times 10^{-7}$ & $4.47 \times 10^{1}$ & $1.19 \times 10^{7}$\\ 
				
				\hline
				
				$2^{21}+1$ & $0$ & $1.6652 \times 10^{-6}$ & $2.5876 \times 10^{-6}$ & $2.23 \times 10^{8}$ & $-$ &
				$1.9291 \times 10^{-9}$ & $2.9177 \times 10^{-9}$ & $2.23 \times 10^{8}$ & $-$\\\
				& $19$ & $1.6652 \times 10^{-6}$ & $2.5877 \times 10^{-6}$ & $4.49 \times 10^{1}$ & $1.19 \times 10^{7}$ & $1.8549 \times 10^{-9}$ & $2.8085 \times 10^{-9}$ & $4.49 \times 10^{1}$ & $1.19 \times 10^{7}$\\ 
				
				\hline
			\end{tabular}	
			 \captionof{table}{Relative errors for \cref{ex:akvary} using DAT with $N_0=4$ and $s=1$ in \cref{alg:dat}. The grid increment used in $[0,1]$ is $(N-1)^{-1}$.
	 	}
			\label{tab:akvary}
		\end{center}
	}

	\begin{figure}[hbtp]
		 \begin{subfigure}[b]{0.3\textwidth}
		 \begin{tikzpicture}
		 \begin{axis}
		 [
		 xlabel={\footnotesize{$N\!\!=$}},
        xlabel style ={xshift=-2.3cm,yshift=0.48cm},
		ylabel={Relative errors},
		 height=0.9\textwidth,
		 width=\textwidth,
		ymin=10e-12, ymax=2,
		 ytick={10e-13,10e-11,10e-9,10e-7,10e-5,10e-3,1},
		 xmode=log,
		 log basis x={2},
		 ymode=log,
		 log basis y={10},
		 legend style={nodes={scale=0.6, transform shape}},
		 legend pos=outer north east
		 ]
		 \addplot[thick,mark=*,blue] coordinates
		 {(262145,4.8707e-1)
		 	(524289,7.1208e-3)
		 	(1048577,1.0754e-4)
		 	 (2097153,1.6652e-6)};
		 \addplot[thick,mark=*,red] coordinates
		 {(262145,7.6812e-1)
		 	(524289,1.1068e-2)
		 	(1048577,1.6712e-4)
		 	 (2097153,2.5877e-6)};
		\addplot[thick, dashed, mark=*,blue] coordinates
		{(262145,6.7606e-3)
			(524289,2.2709e-5)
			(1048577,8.6814e-8)
			 (2097153,1.8549e-9)};
		\addplot[thick, dashed, mark=*,red] coordinates
		{(262145,1.0602e-2)
			(524289,3.5577e-5)
			(1048577,1.3589e-7)
			 (2097153,2.8085e-9)};
		\end{axis}
		\node (B) at (1.1,0.75) {\tiny \scalebox{0.8}{$\mathcal{O}(N^{-8})$}};
		\node (A) at (1.9,1.6) {};
		\draw[thick,->] (B) edge (A);
		\node (D) at (3,2.95) {\tiny \scalebox{0.8}{$\mathcal{O}(N^{-6})$}};
		\node (C) at (2.25,2.1) {};
		\draw[thick,->] (D) edge (C);
		 \end{tikzpicture}
		 \end{subfigure}
	 	 \begin{subfigure}[b]{0.3\textwidth}
		 	 \raisebox{0.1cm}{\includegraphics[width=\textwidth]{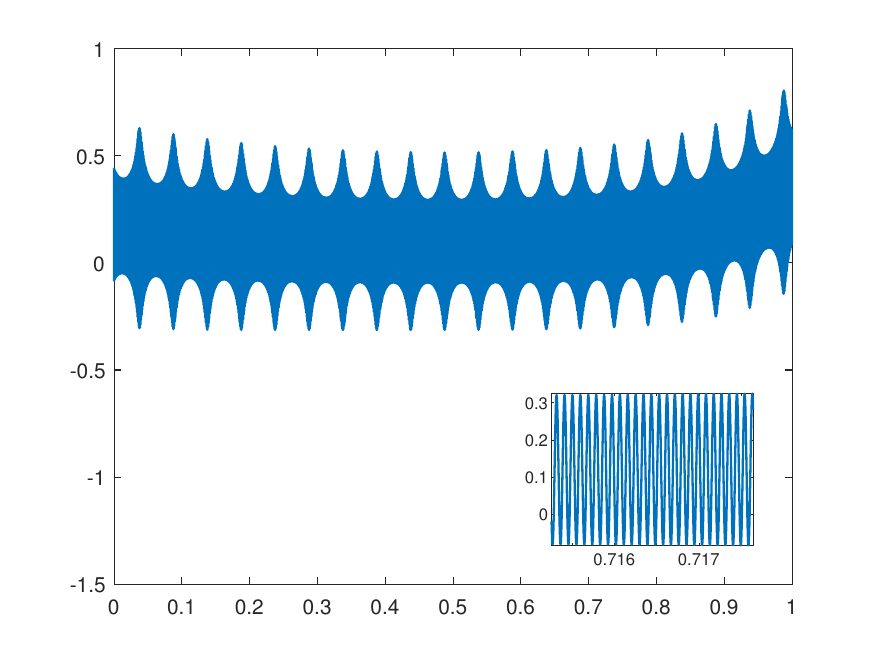}}
		 \end{subfigure}
		 \begin{subfigure}[b]{0.3\textwidth}
		 	 \raisebox{0.1cm}{\includegraphics[width=\textwidth]{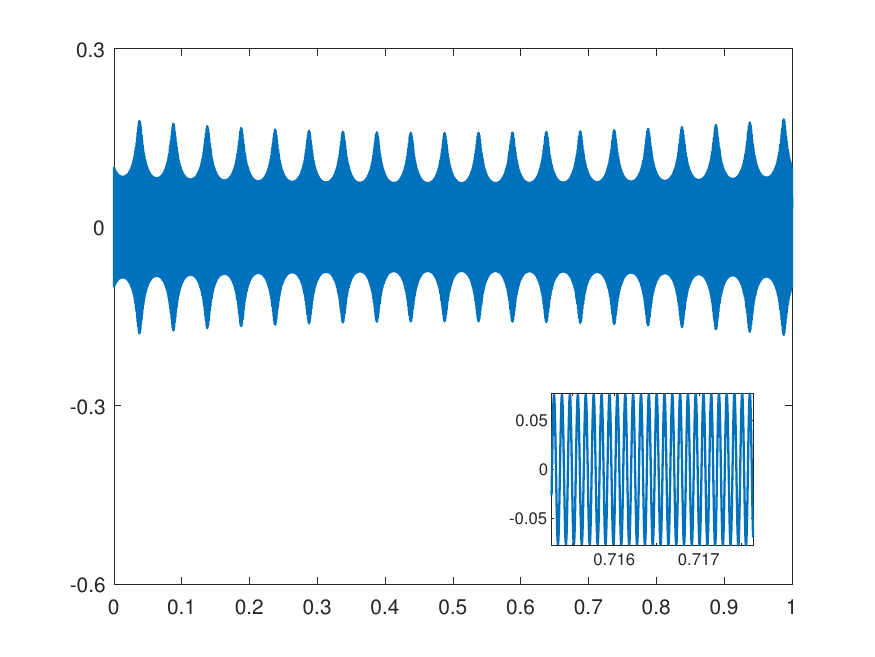}}
		 \end{subfigure}
		 \caption{\cref{ex:akvary}:
		Convergence plot (left) of DAT using the compact FDM  with order $M=6$ (solid) and $M=8$ (dashed) for errors $\frac{\|u_N-u_{2N-1}\|_{\infty}}{\|u_{2N-1}\|_{\infty}}$ (blue) and $\frac{\|u_N'-u_{2N-1}'\|_{\infty}}{\|u_{2N-1}'\|_{\infty}}$ (red). The displayed convergence rates are obtained by calculating $\log_2\left(
\frac{\|u_N-u_{2N-1}\|_{\infty}/\|u_{2N-1}\|_{\infty} }{ \|u_{2N-1}-u_{4N-3}\|_{\infty}/\|u_{4N-3}\|_{\infty} }\right)$ and $\log_2\left(\frac{\|u_N'-u_{2N-1}'\|_{\infty}/\|u_{2N-1}'\|_{\infty} }{\|u_{2N-1}'-u_{4N-3}'\|_{\infty}/\|u_{4N-3}'\|_{\infty}} \right)$. The real (middle) and imaginary (right) parts of $u_N$ with $N=2^{21}+1$, $\ell=19$ and $M=8$.}
		\label{fig:akvary}
	\end{figure}

\begin{example}\label{ex:newphet}
	\normalfont
	Consider
$[a(x)u'(x)]'+\kappa^2(x) u(x)=f(x), x\in (0,1)$
with the following coefficients
	\begin{align*}
	a & = (5+\sin(10 \pi x)) \chi_{[0,\frac{23}{100}) \cup [\frac{83}{100},1]} + (2+\sin(10 \pi x))\chi_{[\frac{23}{100},\frac{53}{100})} + (9+\sin(10 \pi x))\chi_{[\frac{53}{100},\frac{83}{100})},\\
	\kappa & = 2000 \left(e^{-x} \chi_{[0,\frac{23}{100})} + \chi_{[\frac{23}{100},\frac{53}{100}) \cup [\frac{83}{100},1]} + 0.5 e^{x}\chi_{[\frac{53}{100},\frac{83}{100})}\right),\\
	f & = 2^{21}\left(\cosh(x) \chi_{[0,\frac{23}{100})} + \sinh(x) \chi_{[\frac{23}{100},\frac{53}{100})} - \cosh(x) \chi_{[\frac{53}{100},\frac{83}{100})} - \sinh(x) \chi_{[\frac{83}{100},1]}\right),
	\end{align*}
and the boundary conditions $u'(0)=1$ and $\sqrt{5}u'(1)-2000iu(1)=0$.
The exact solution's analytic expression is unknown. See \cref{tab:elliptic} for the numerical performance measured by $\frac{\|u_N-u_{2N-1}\|_\infty}{\|u_{2N-1}\|_\infty}$ and
$\frac{\|u_N'-u_{2N-1}'\|_\infty}{\|u_{2N-1}'\|_\infty}$, and \cref{fig:elliptic} for the convergence plot and approximated solution $u_N$. As can be seen from \cref{tab:elliptic}, the convergence rates agree with the theoretical discussion in \cref{sec:fdm,sec:convergence}.
\end{example}

	{\tiny
		\begin{center}
			\begin{tabular}{c c | c c c c | c c c c}
				\hline
				\hline
				 \multicolumn{2}{c}{} \vline & \multicolumn{4}{c}{DAT using the compact FDM with order $M=6$} \vline & \multicolumn{4}{c}{DAT using the compact FDM with order $M=8$}\\
				\hline
				$N$ & $\ell$ & $\frac{\|u_N-u_{2N-1}\|_{\infty}}{\|u_{2N-1}\|_{\infty}}$ & $\frac{\|u_{N}'-u'_{2N-1}\|_{\infty}}{\|u_{2N-1}'\|_{\infty}}$ & Local CN & Link CN & $\frac{\|u_{N}-u_{2N-1}\|_{\infty}}{\|u_{2N-1}\|_{\infty}}$ & $\frac{\|u_{N}'-u'_{2N-1}\|_{\infty}}{\|u_{2N-1}'\|_{\infty}}$ & Local CN & Link CN\\
				\hline				
				$2^{10}+1$ & $0$ & $2.3932$ & $1.8969$ & $1.25 \times 10^{6}$ & $-$ &
				$4.8820 \times 10^{-2}$ & $1.1942 \times 10^{-1}$ & $4.51 \times 10^{5}$ & $-$\\	
				& $5$ & $2.3932$ & $1.8969$ & $2.66 \times 10^{4}$ & $3.44 \times 10^{3}$ &
				$4.8820 \times 10^{-2}$ & $1.1942 \times 10^{-1}$ & $6.47 \times 10^{3}$ & $2.01 \times 10^{3}$\\			 
				\hline
				$2^{11}+1$ & $0$ & $6.9794 \times 10^{-3}$ & $6.4621 \times 10^{-3}$ & $3.13 \times 10^{4}$ & $-$ &
				$1.6095\times 10^{-4}$ & $5.3061 \times 10^{-4}$ & $3.11 \times 10^{4}$ & $-$\\
				& $5$ & $6.9794 \times 10^{-3}$ & $6.4621 \times 10^{-3}$ & $9.00 \times 10^{3}$ & $6.92 \times 10^{3}$ &
				$1.6095 \times 10^{-4}$ & $5.3061 \times 10^{-4}$ & $9.00 \times 10^{3}$ & $2.48 \times 10^{3}$\\
				\hline
				$2^{12}+1$ & $0$ & $1.0216 \times 10^{-4}$ & $9.0880 \times 10^{-5}$ & $9.04 \times 10^{4}$ & $-$ &
				$6.6055 \times 10^{-7}$ & $2.1900 \times 10^{-6}$ & $9.04 \times 10^{4}$ & $-$\\
				& $5$ & $1.0216 \times 10^{-4}$ & $9.0880 \times 10^{-5}$ & $3.66 \times 10^{4}$ & $2.53 \times 10^{3}$ &
				$6.6055 \times 10^{-7}$ & $2.1900 \times 10^{-6}$ & $3.66\times 10^{4}$ & $2.51 \times 10^{3}$\\			 
				\hline
				$2^{13}+1$ & $0$ & $1.5468 \times 10^{-6}$ & $1.4036 \times 10^{-6}$ & $3.60 \times 10^{5}$ & $-$ &
				$2.6471 \times 10^{-9}$ & $8.6758 \times 10^{-9}$ & $3.60 \times 10^{5}$ & $-$\\
				& $5$ & $1.5468 \times 10^{-6}$ & $1.4036 \times 10^{-6}$ & $1.47 \times 10^{5}$ & $2.51 \times 10^{3}$ &
				$2.6448 \times 10^{-9}$ & $8.6783 \times 10^{-9}$ & $1.47 \times 10^{5}$ & $2.51 \times 10^{3}$\\
				\hline
			\end{tabular}	
			 \captionof{table}{Relative errors for \cref{ex:newphet} using DAT with $N_0=16$ and $s=1$ in \cref{alg:dat}. The grid increments used in $[0,\frac{23}{100}]$, $[\frac{23}{100},\frac{53}{100}]$, $[\frac{53}{100},\frac{83}{100}]$, and $[\frac{83}{100},1]$ are respectively $\frac{23}{25(N-1)}$, $\frac{6}{5(N-1)}$, $\frac{6}{5(N-1)}$, and $\frac{17}{25(N-1)}$.
}
			 \label{tab:elliptic}
		\end{center}
	}

	\begin{figure}[hbtp]
		 \begin{subfigure}[b]{0.3\textwidth}
		\begin{tikzpicture}
		\begin{axis}
		[
		 xlabel={\footnotesize{$N\!\!=$}},
        xlabel style ={xshift=-2.3cm,yshift=0.48cm},
		 ylabel={\footnotesize{Relative errors}},
		height=0.9\textwidth,
		width=\textwidth,
		xmode=log,
		log basis x={2},
		ymode=log,
		log basis y={10},
		 ytick={10e-13,10e-11,10e-9,10e-7,10e-5,10e-3,10e-1},
		 legend style={nodes={scale=0.6, transform shape}},
		 legend pos=outer north east,
		]	
		\addplot[thick, mark=*,blue] coordinates
		{(1025,2.3932)
			(2049,6.9794e-3)
			(4097,1.0216e-4)
			(8193,1.5468e-6)};
		 \addplot[thick,mark=*,red] coordinates
		{(1025,1.8969)
			(2049,6.4621e-3)
			(4097,9.0880e-5)
			(8193,1.4036e-6)};
		\addplot[thick, dashed, mark=*,blue] coordinates
		{(1025,4.8820e-2)
			(2049,1.6095e-4)
			(4097,6.6055e-7)
			(8193,2.6448e-9)};
		\addplot[thick, dashed, mark=*,red] coordinates
		{(1025,1.1942e-1)
			(2049,5.3061e-4)
			(4097,2.1900e-6)
			(8193,8.6783e-9)};
		\end{axis}	
		\node (B) at (1.1,0.75) {\tiny \scalebox{0.8}{$\mathcal{O}(N^{-8})$}};
		\node (A) at (1.9,1.6) {};
		\draw[thick,->] (B) edge (A);
		\node (D) at (2.7,2.65) {\tiny \scalebox{0.8}{$\mathcal{O}(N^{-6})$}};
		\node (C) at (1.97,1.82) {};
		\draw[thick,->] (D) edge (C);
		\end{tikzpicture}
		 \end{subfigure}
	 	 \begin{subfigure}[b]{0.3\textwidth}
	 	 	 \raisebox{0.1cm}{\includegraphics[width=\textwidth]{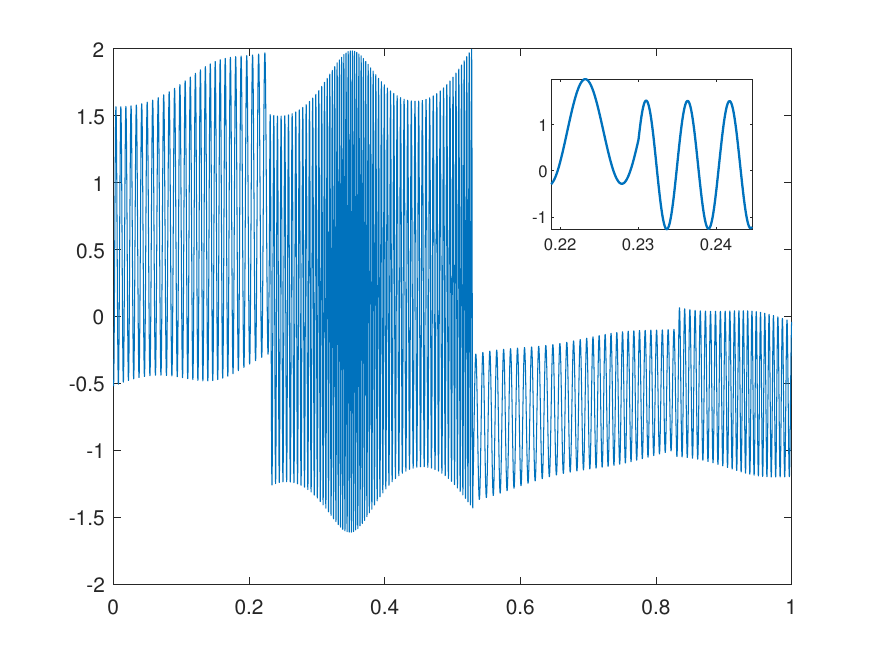}}
	 	 \end{subfigure}
	 	 \begin{subfigure}[b]{0.3\textwidth}
	 	 	 \raisebox{0.1cm}{\includegraphics[width=\textwidth]{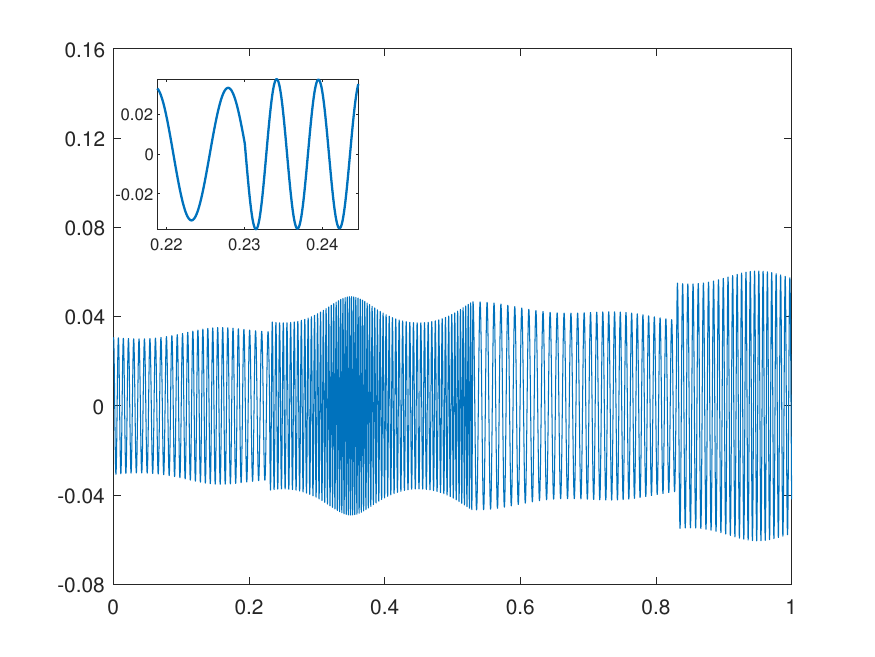}}
	 	 \end{subfigure}
		 \caption{\cref{ex:newphet}:
Convergence plot (left) of DAT using the compact FDM  with order $M=6$ (solid) and $M=8$ (dashed) for errors $\frac{\|u_N-u_{2N-1}\|_{\infty}}{\|u_{2N-1}\|_{\infty}}$ (blue) and
$\frac{\|u_N'-u_{2N-1}'\|_{\infty}}{\|u_{2N-1}'\|_{\infty}}$ (red). The displayed convergence rates are obtained by calculating $\log_2\left(\frac{\|u_N-u_{2N-1}\|_{\infty}/\|u_{2N-1}\|_{\infty}  }{ \|u_{2N-1}-u_{4N-3}\|_{\infty}/\|u_{4N-3}\|_{\infty}}\right)$ and $\log_2\left(\frac{\|u_N'-u_{2N-1}'\|_{\infty} /\|u_{2N-1}'\|_{\infty} }{ \|u_{2N-1}'-u_{4N-3}'\|_{\infty}/\|u_{4N-3}'
\|_{\infty}}\right)$. The real (middle) and imaginary (right) parts of $u_N$ with $N=2^{13}+1$, $\ell=5$ and $M=8$.}
		\label{fig:elliptic}
	\end{figure}

\subsection{Numerical experiments on 2D Helmholtz equations}
\label{subsec:2D}

Separable 2D Helmholtz equations can be converted into a sequence of 1D Helmholtz problems, to which we may apply DAT as demonstrated below.

\begin{example}\label{ex:2D}
	\normalfont
	Let $D_{1}:=\{(r,\theta):1 \le r <2, \theta \in [0,2\pi)\}$, $D_{2}:=\{(r,\theta):2 \le r \le 4, \theta \in [0,2\pi)\}$, and $D:=D_{1} \cup D_{2}$. Consider the following 2D Helmholtz equation $\nabla\cdot( \nabla u)+\kappa^2 u=0$ on the domain $D$, which can be rewritten
	in the polar coordinate system as follows:
	\begin{align*}
	& \tfrac{1}{r} \tfrac{\partial}{\partial r}\left(r \tfrac{\partial u}{\partial r}\right) + \tfrac{1}{r^{2}} \tfrac{\partial^{2}u}{\partial \theta^{2}} + \kappa^{2} u =0 \quad \text{on} \quad D,\\
	& \tfrac{\partial u}{\partial r}\rvert_{r=1}=0,\quad
	\left(\tfrac{\partial u}{\partial r} + \left(\tfrac{1}{2r} - 100i\right) u\right)\rvert_{r=4} = \left(\tfrac{\partial u_{I}}{\partial r} + \left(\tfrac{1}{2r} - 100i\right) u_{I}\right)\rvert_{r=4},
	\end{align*}
	where $\kappa=50\chi_{D_{1}} + 100\chi_{D_{2}}=\kappa_0(r)$ with $\kappa_0:=50\chi_{[1,2)}+100\chi_{[2,4]}$, $u_{I}:=\sum_{m=0}^{\infty} i^{m}(\dirac_{0,m}+2(1-\dirac_{0,m})) J_{m}(100r) \cos(m\theta)$, and $J_m(\cdot)$ is the Bessel function of the first kind of order $m$.
	Using the method outlined in \cite[Section 7.1]{LBPT05},
	the exact solution $\eu$ is given by the series $\eu=\sum_{m=0}^\infty r^{-1/2} v_m(r) \cos (m\theta)$, where $v_m, m\in \NN$ satisfy the following 1D Helmholtz equations:
	\begin{equation} \label{2Dto1D}
		\begin{aligned}
			& v_{m}''+\left(\kappa_0^2
			 -r^{-2}\left(m^{2}-\tfrac{1}{4}\right)\right)v_{m} = 0, \quad  r \in(1,4),\quad \text{with} \quad \left(v_{m}'- \tfrac{1}{2}v_{m}\right)\rvert_{r=1}=0,\\
			& \left(v_{m}'- 100i v_{m}\right)|_{r=4}= 2i^{m}(\dirac_{0,m}+2(1-\dirac_{0,m})) \left((J_{m}(100r))'|_{r=4}+\left(\tfrac{1}{8}-100i\right)J_{m}(400)\right).
		\end{aligned}
	\end{equation}
 	In particular, for each $m \in \mathbb{N}_0$, $v_m$ has the following  analytic expression
	\[
v_{m}=r^{1/2}\Big(A_m J_m(50r) + B_m Y_m(50r)\Big)\chi_{[1,2)} + r^{1/2}\Big(C_m J_m(100r) + D_m Y_m(100r)\Big)\chi_{[2,4]},
	\]
	where $Y_m(\cdot)$ is the Bessel function of the second kind of order $m$ and all the coefficients $A_m, B_m, C_m, D_m$ are uniquely determined by solving a system of linear equations that arises from imposing the boundary conditions and the transmission conditions: $v_m(2-)=v_m(2+)$ and $v'_m(2-)=v'_m(2+)$.
	
	Our approximated solution then takes the form $u_{N}=\sum_{m=0}^{640} r^{-1/2}v_{m,N} \cos(m\theta)$, where $v_{m,N}$ is the approximated solution to $v_{m}$ in \eqref{2Dto1D} using $N$ points. In all cases, we use $2049$ points to discretize the angle $\theta$ in our exact and approximated solutions. Also note that the following ``Local CN" and ``Link CN" record the maximum condition number of all local problems and all $m=0,\dots,640$.  See \cref{tab:2D} for the numerical performance measured by both $\frac{\|u_{N}-\eu\|_\infty}{\|\eu\|_\infty}$ and
	 $\frac{\|u_{N}-\eu\|_2}{\|\eu\|_2}$, where we use the first $641$ terms of $\eu$ (i.e., $\eu\approx\sum_{m=0}^{640} r^{-1/2} v_m(r) \cos(m\theta)$), and \cref{fig:2D} for the convergence plot and approximated solution $u_N$. Due to the separation of variables, the convergence rates observed in the plot are solely driven by the convergence rates that take place in each 1D problem. As can be seen, the convergence rates agree with the theoretical discussion in \cref{sec:fdm,sec:convergence}.
\end{example}

	{\tiny
	\begin{center}
		\begin{tabular}{c c | c c c c | c c c c}
			\hline
			\hline
			\multicolumn{2}{c}{} \vline & \multicolumn{4}{c}{DAT using the compact FDM with order $M=6$} \vline & \multicolumn{4}{c}{DAT using the compact FD with order $M=8$}\\
			\hline
			$N$ & $\ell$ & $\frac{\|u_N-\eu\|_{\infty}}{\|\eu\|_{\infty}}$  & $\frac{\|u_N-\eu\|_{2}}{\|\eu\|_{2}}$ & Local CN & Link CN  & $\frac{\|u_N-\eu\|_{\infty}}{\|\eu\|_{\infty}}$  & $\frac{\|u_N-\eu\|_{2}}{\|\eu\|_{2}}$ & Local CN & Link CN\\
			\hline			
			$2^{8}+1$ & $0$ & $1.0461\times 10^{-1}$ & $6.6021 \times 10^{-2}$ & $1.84 \times 10^{5}$ & $-$ &
			$6.8220\times 10^{-3}$ & $3.4958 \times 10^{-3}$ & $1.79 \times 10^{5}$ & $-$\\
			& $5$ & $1.0461\times 10^{-1}$ & $6.6021 \times 10^{-2}$ & $4.79\times 10^{4}$ & $3.05 \times 10^{5}$ &
			$6.8220\times 10^{-3}$ & $3.4958 \times 10^{-3}$ & $9.13 \times 10^{4}$ & $2.81 \times 10^{5}$\\
			\hline
			$2^{9}+1$ & $0$ & $1.2885 \times 10^{-3}$ & $7.6950 \times 10^{-4}$ & $4.04\times 10^{4}$ & $-$ &
			$2.7208 \times 10^{-5}$ & $1.4102 \times 10^{-5}$ & $4.04 \times 10^{4}$ & $-$\\
			& $6$ & $1.2885 \times 10^{-3}$ & $7.6950 \times 10^{-4}$ & $7.06\times 10^{2}$ & $2.78 \times 10^{5}$ &
			$2.7208 \times 10^{-5}$ & $1.4102 \times 10^{-5}$ & $7.05 \times 10^{2}$ & $2.78 \times 10^{5}$\\
			\hline
			$2^{10}+1$ & $0$ & $1.9204 \times 10^{-5}$ & $1.1290\times 10^{-5}$ & $1.52\times 10^{5}$ & $-$ &
			$1.0825 \times 10^{-7}$ & $5.5999\times 10^{-8}$ & $1.52\times 10^{5}$ & $-$\\
			& $7$ & $1.9204 \times 10^{-5}$ & $1.1290\times 10^{-5}$ & $4.78\times 10^{1}$ & $2.78 \times 10^{5}$ &
			$1.0825 \times 10^{-7}$ & $5.5999\times 10^{-8}$ & $4.78\times 10^{1}$ & $2.78 \times 10^{5}$\\
			\hline
			$2^{11}+1$ & $0$ & $2.9671 \times 10^{-7}$ & $1.7375 \times 10^{-7}$ & $6.01 \times 10^{5}$ & $-$ &
			$4.2441 \times 10^{-10}$ & $2.1932 \times 10^{-10}$ & $6.01 \times 10^{5}$ & $-$\\
			& $8$ & $2.9671 \times 10^{-7}$ & $1.7375 \times 10^{-7}$ & $4.56 \times 10^{1}$ & $2.78 \times 10^{5}$ &
			$4.2435 \times 10^{-10}$ & $2.1934 \times 10^{-10}$ & $4.56 \times 10^{1}$ & $2.78 \times 10^{5}$\\	 
			\hline
		\end{tabular}	
		 \captionof{table}{Relative errors for \cref{ex:2D} using DAT with $N_0=8$ and $s=1$ in \cref{alg:dat}. The grid increments used in each $[1,2]$ and $[2,4]$ are respectively $2(N-1)^{-1}$ and $4(N-1)^{-1}$.
}
		\label{tab:2D}
	\end{center}
	}

		\begin{figure}[htbp]
		\centering
		 \begin{subfigure}[b]{0.3\textwidth}
		\begin{tikzpicture}
		\begin{axis}
		[
		 xlabel={\footnotesize{$N\!\!=$}},
        xlabel style ={xshift=-2.3cm,yshift=0.48cm},
		 ylabel={\footnotesize{Relative errors}},
		height=0.9\textwidth,
		width=\textwidth,
		xmode=log,
		log basis x={2},
		ymode=log,
		log basis y={10},
		 ytick={10e-10,10e-8,10e-6,10e-4,10e-2,10e-0},
		legend style={nodes={scale=0.6, transform shape}},
		legend pos=outer north east
		]
		 \addplot[thick,mark=*,blue] coordinates
		{(257,1.0461e-1)
			(513,1.2885e-3)
			(1025,1.9204e-5)
			(2049,2.9671e-7)};
		 \addplot[thick,mark=*,red] coordinates
		{(257,6.6021e-2)
			(513,7.6950e-4)
			(1025,1.1290e-5)
			(2049,1.7375e-7)};
		 \addplot[thick,dashed,mark=*,blue] coordinates
		{(257,6.8220e-3)
			(513,2.7208e-5)
			(1025,1.0825e-7)
			(2049,4.2435e-10)};
		 \addplot[thick,dashed,mark=*,red] coordinates
		{(257,3.4958e-3)
			(513,1.4102e-5)
			(1025,5.5599e-8)
			(2049,2.1932e-10)};
		\end{axis}
		\node (B) at (1.1,0.75) {\tiny \scalebox{0.8}{$\mathcal{O}(N^{-8})$}};
		\node (A) at (1.9,1.6) {};
		\draw[thick,->] (B) edge (A);
		\node (D) at (2.85,2.8) {\tiny \scalebox{0.8}{$\mathcal{O}(N^{-6})$}};
		\node (C) at (2.1,1.95) {};
		\draw[thick,->] (D) edge (C);
		\end{tikzpicture}
		\end{subfigure}
		 \begin{subfigure}[b]{0.3\textwidth}
			 \raisebox{0.1cm}{\includegraphics[width=\textwidth]{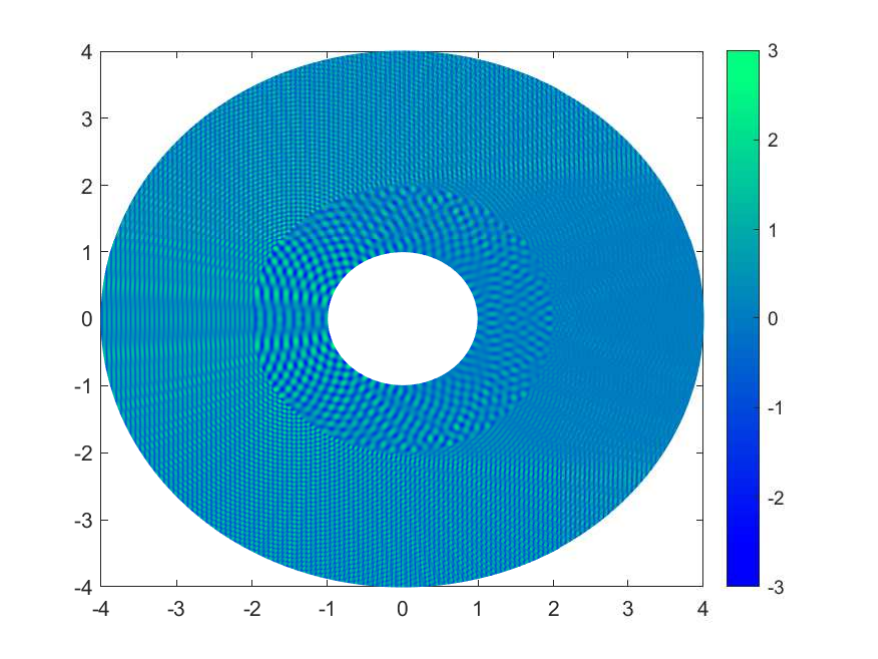}}
		\end{subfigure}
		 \begin{subfigure}[b]{0.3\textwidth}
			 \raisebox{0.1cm}{\includegraphics[width=\textwidth]{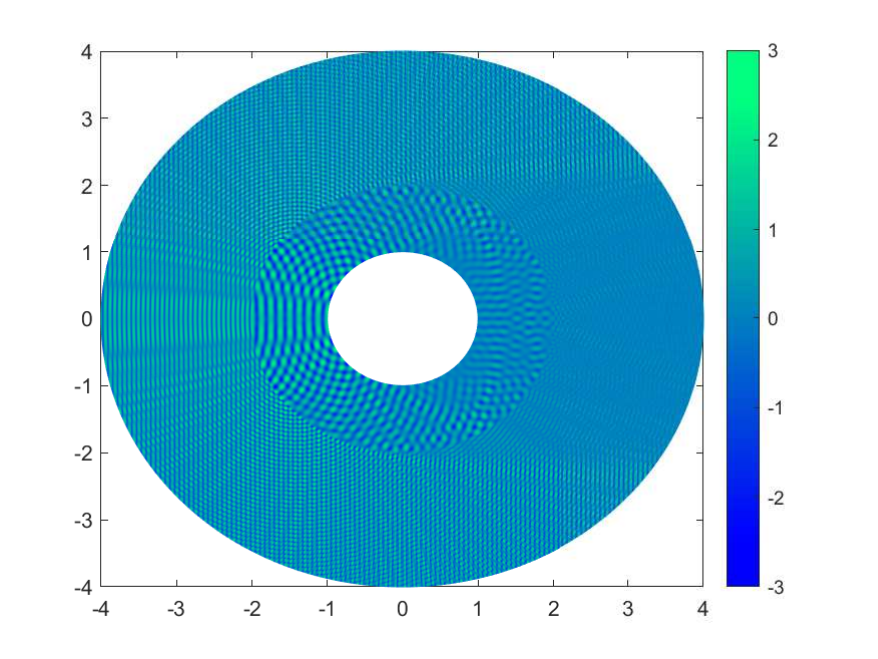}}
		\end{subfigure}
		\caption
		{\cref{ex:2D}:
	Convergence plot (left) of DAT using the compact FDM  with order $M=6$ (solid) and $M=8$ (dashed) for relative errors $\frac{\|u_N-\eu\|_{\infty}}{\|\eu\|_{\infty}}$ (red) and
	 $\frac{\|u_N-\eu\|_{2}}{\|\eu\|_{2}}$ (blue). The displayed convergence rates are obtained by calculating $\log_2\left(\frac{\|u_N-\eu\|_{\infty}}{\|u_{2N}-\eu\|_{\infty}}\right)$ and $\log_2\left(\frac{\|u_N-\eu\|_{2}}{\|u_{2N}-\eu\|_{2}}
\right)$. The real (middle) and imaginary (right) parts of $u_N$ with $N=2^{11}+1$, $\ell=8$ and $M=8$.}
		\label{fig:2D}
	\end{figure}

\begin{example}\label{ex:2D:f}
	\normalfont
	Let $D_{1}:=\{(r,\theta):1 \le r <3, \theta \in [0,2\pi)\}$, $D_{2}:=\{(r,\theta):3 \le r \le 4, \theta \in [0,2\pi)\}$, and $D:=D_{1} \cup D_{2}$. Consider the following 2D Helmholtz equation $\nabla\cdot( \nabla u)+\kappa^2 u=f$ on the domain $D$, which can be rewritten
	in the polar coordinate system as follows:
	\begin{align*}
		& \tfrac{1}{r} \tfrac{\partial}{\partial r}\left(r \tfrac{\partial u}{\partial r}\right) + \tfrac{1}{r^{2}} \tfrac{\partial^{2}u}{\partial \theta^{2}} + \kappa^{2} u =f \quad \text{on} \quad D \quad \text{with} \quad 	 u\rvert_{r=1}=\tfrac{\sin(4\theta)}{\sqrt{\pi}} \quad \text{and} \quad
		\left(\tfrac{\partial u}{\partial r} -50iu\right)\rvert_{r=4} = 0,
	\end{align*}
	where $\kappa=400\chi_{D_{1}} + 50\chi_{D_{2}}=\kappa_0(r)$ with $\kappa_0(r):=400\chi_{[1,3)} + 50\chi_{[3,4]}$ and $f=(10^5 J_1(r)\chi_{D_{1}} + 10^4J_0(r)\chi_{D_{2}})$.
	By applying the separation of variables twice, the exact solution $\eu$ in the polar coordinate system to the above 2D Helmholtz equation is given by the series $\eu=\sum_{m\in \mathbb{Z}} r^{-1/2} v_m(r) e^{im\theta}$, where for each $ m\in \mathbb{Z}$, $v_m$ satisfies
\begin{equation} \label{2Dto1D:f}
\begin{aligned}
	& v_{m}''+\left(\kappa_0^2
	 -r^{-2}\left(m^{2}-\tfrac{1}{4}\right)\right)v_{m} = r^{1/2} f_m(r), \quad  r \in(1,4), \\
	& v_{m}\rvert_{r=1}=\tfrac{i}{\sqrt{2}} (-\delta_{4,m}+\delta_{-4,m}), \quad \left(v_{m}'- \left(\tfrac{1}{8}+50i \right) v_{m}\right)\rvert_{r=4}=0,
\end{aligned}
\end{equation}
and
$f_m(r):=(2\pi)^{-1/2}\int_{0}^{2\pi} f(r,\theta)e^{im\theta} d\theta$ can be efficiently computed by FFT.
Note that $f_m$ are zero except for $m=0$.
Since $v_m$ are zero for $m\in \mathbb{Z}\backslash\{0,\pm 4\}$,
our approximated solution is of the form $u_{N}=(2\pi r)^{-1/2}v_{0,N} + (2\pi r)^{-1/2}\left(v_{4,N}e^{i4\theta}+v_{-4,N}e^{-i4\theta}\right)$, where $v_{m,N}$ is the approximated solution to $v_{m}$ in \eqref{2Dto1D:f} using $N$ points. We use $2049$ points to discretize the angle $\theta$ in our approximated solutions. Note that the following ``Local CN" and ``Link CN" record the maximum condition number of all local and linking problems, and all $m = 0,\pm4$.  See \cref{tab:2D:f} for the numerical performance measured by both $\frac{\|u_{N}-u_{2N}\|_\infty}{\|u_{2N}\|_\infty}$ and $\frac{\|u_{N}-u_{2N}\|_2}{\|u_{2N}\|_2}$, and \cref{fig:2D:f} for the convergence plot and approximated solution $u_N$. Due to the separation of variables, the convergence rates observed in the plot are solely driven by the convergence rates that take place in each 1D problem. As can be seen from \cref{tab:2D:f}, the convergence rates agree with the theoretical discussion in \cref{sec:fdm,sec:convergence}.
\end{example}

	{\tiny
	\begin{center}
		\begin{tabular}{c c | c c c c | c c c c}
			\hline
			\hline
			\multicolumn{2}{c}{} \vline & \multicolumn{4}{c}{DAT using the compact FDM with order $M=6$} \vline & \multicolumn{4}{c}{DAT using the compact FD with order $M=8$}\\
			\hline
			$N$ & $\ell$ & $\frac{\|u_{N}-u_{2N}\|_{\infty}}{\|u_{2N}\|_{\infty}}$  & $\frac{\|u_{N}-u_{2N}\|_{2}}{\|u_{2N}\|_{2}}$ & Local CN & Link CN  & $\frac{\|u_{N}-u_{2N}\|_{\infty}}{\|u_{2N}\|_{\infty}}$  & $\frac{\|u_{N}-u_{2N}\|_{2}}{\|u_{2N}\|_{2}}$ & Local CN & Link CN\\
			\hline			
			$2^{10}$ & $0$ & $2.8213\times 10^{-1}$ & $2.0128 \times 10^{-1}$ & $1.54 \times 10^{6}$ & $-$ &
			$1.4812\times 10^{-2}$ & $1.1534 \times 10^{-2}$ & $1.47 \times 10^{6}$ & $-$\\
			& $7$ & $2.8213\times 10^{-1}$ & $2.0128 \times 10^{-1}$ & $1.61\times 10^{2}$ & $5.26 \times 10^{3}$ &
			$1.4812\times 10^{-2}$ & $1.1534 \times 10^{-2}$ & $1.43 \times 10^{2}$ & $4.95 \times 10^{3}$\\
			\hline
			$2^{11}$ & $0$ & $5.6999 \times 10^{-3}$ & $4.3636 \times 10^{-3}$ & $1.21\times 10^{5}$ & $-$ &
			$4.1235 \times 10^{-5}$ & $3.2294 \times 10^{-5}$ & $1.22 \times 10^{5}$ & $-$\\
			& $8$ & $5.6999 \times 10^{-3}$ & $4.3636 \times 10^{-3}$ & $7.06\times 10^{2}$ & $4.98 \times 10^{3}$ &
			$4.1235 \times 10^{-5}$ & $3.2294 \times 10^{-5}$ & $7.05 \times 10^{2}$ & $4.98 \times 10^{3}$\\
			\hline
			$2^{12}$ & $0$ & $8.3447 \times 10^{-5}$ & $6.3909\times 10^{-5}$ & $4.85\times 10^{5}$ & $-$ &
			$1.5005 \times 10^{-7}$ & $1.1778\times 10^{-7}$ & $4.86\times 10^{5}$ & $-$\\
			& $9$ & $8.3447 \times 10^{-5}$ & $6.3909\times 10^{-5}$ & $4.50\times 10^{1}$ & $4.98 \times 10^{3}$ &
			$1.5005 \times 10^{-7}$ & $1.1778\times 10^{-7}$ & $4.50\times 10^{1}$ & $4.98 \times 10^{3}$\\
			\hline
			$2^{13}$ & $0$ & $1.2805 \times 10^{-6}$ & $9.8055 \times 10^{-7}$ & $1.95 \times 10^{6}$ & $-$ &
			$5.8547 \times 10^{-10}$ & $4.5911 \times 10^{-10}$ & $1.95 \times 10^{6}$ & $-$\\
			& $10$ & $1.2805 \times 10^{-6}$ & $9.8055 \times 10^{-7}$ & $4.50 \times 10^{1}$ & $4.98 \times 10^{3}$ &
			$5.8767 \times 10^{-10}$ & $4.6687 \times 10^{-10}$ & $4.50 \times 10^{1}$ & $4.98 \times 10^{3}$\\	 
			\hline
		\end{tabular}	
		 \captionof{table}{Relative errors for \cref{ex:2D:f} using DAT with $N_0=8$ and $s=1$ in \cref{alg:dat}. The grid increments used in each $[1,3]$ and $[3,4]$ are respectively $4N^{-1}$ and $2N^{-1}$.}
	\label{tab:2D:f}
	\end{center}
	}

	\begin{figure}[htbp]
		\centering
		 \begin{subfigure}[b]{0.3\textwidth}
			\begin{tikzpicture}
				\begin{axis}
					[
					 xlabel={\footnotesize{$N\!\!=$}},
					xlabel style ={xshift=-2.3cm,yshift=0.48cm},
					 ylabel={\footnotesize{Relative errors}},
					 height=0.9\textwidth,
					 width=\textwidth,
					xmode=log,
					log basis x={2},
					ymode=log,
					log basis y={10},
					 ytick={10e-10,10e-8,10e-6,10e-4,10e-2,10e-0},
					legend style={nodes={scale=0.6, transform shape}},
					legend pos=outer north east
					]
					 \addplot[thick,mark=*,blue] coordinates
					 {(1024,2.8213e-1)
						 (2048,5.6999e-3)
						 (4096,8.3447e-5)
						 (8192,1.2805e-6)};
					 \addplot[thick,mark=*,red] coordinates
					 {(1024,2.0128e-1)
						 (2048,4.3636e-3)
						 (4096,6.3909e-5)
						 (8192,9.8055e-7)};
					 \addplot[thick,dashed,mark=*,blue] coordinates
					 {(1024,1.4812e-2)
						 (2048,4.1235e-5)
						 (4096,1.5005e-7)
						 (8192,5.8767e-10)};
					 \addplot[thick,dashed,mark=*,red] coordinates
					 {(1024,1.1534e-2)
						 (2048,3.2294e-5)
						 (4096,1.1778e-7)
						 (8192,4.6687e-10)};
				\end{axis}
				\node (B) at (1.1,0.75) {\tiny \scalebox{0.8}{$\mathcal{O}(N^{-8})$}};
				\node (A) at (1.9,1.6) {};
				\draw[thick,->] (B) edge (A);
				\node (D) at (2.9,2.85) {\tiny \scalebox{0.8}{$\mathcal{O}(N^{-6})$}};
				\node (C) at (2.15,2.05) {};
				\draw[thick,->] (D) edge (C);
			\end{tikzpicture}
		\end{subfigure}
		 \begin{subfigure}[b]{0.3\textwidth}
			 \raisebox{0.1cm}{\includegraphics[width=\textwidth]{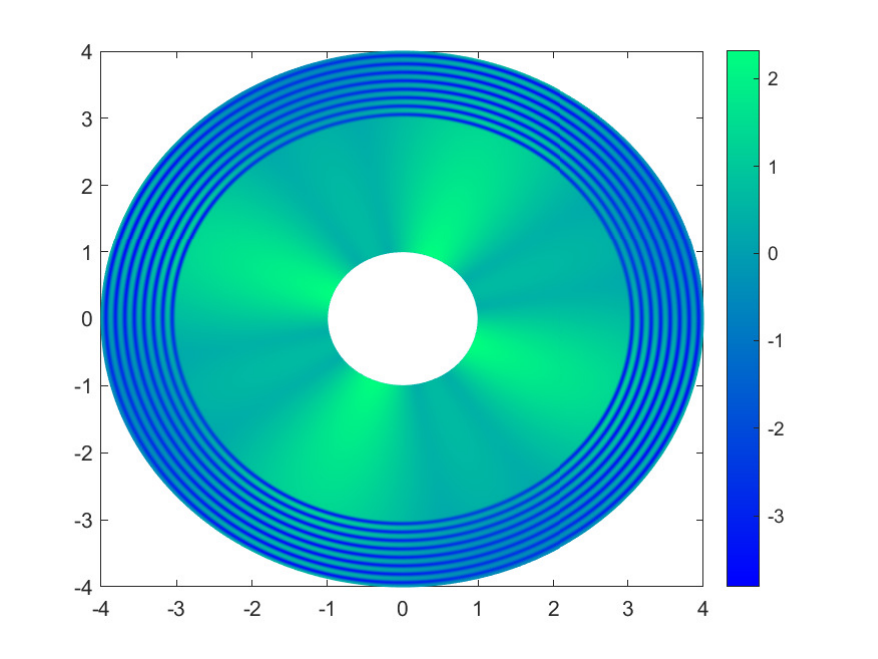}}
		\end{subfigure}
		 \begin{subfigure}[b]{0.3\textwidth}
			 \raisebox{0.1cm}{\includegraphics[width=\textwidth]{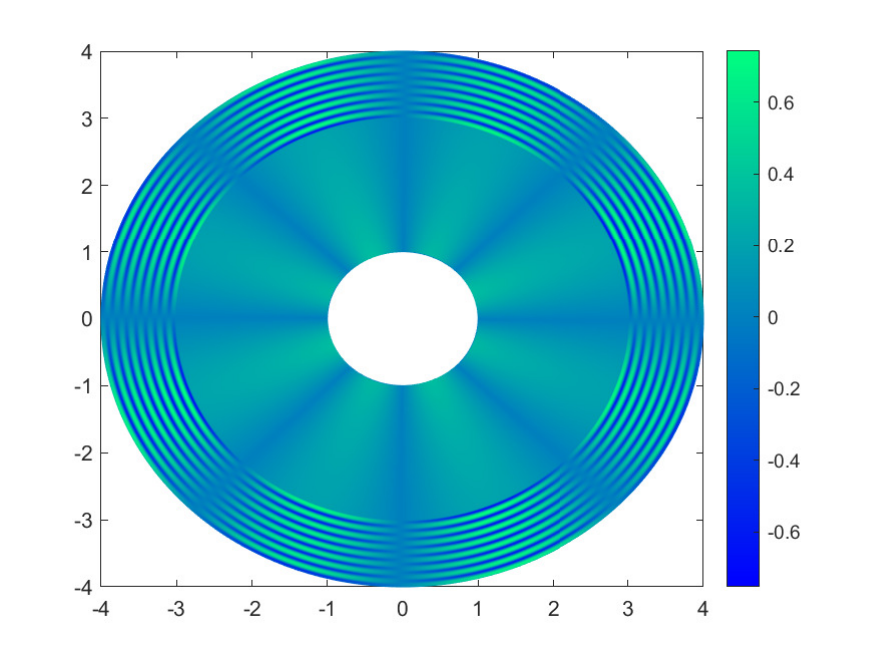}}
		\end{subfigure}
		\caption
		{\cref{ex:2D:f}:
			Convergence plot (left) of DAT using the compact FDM  with order $M=6$ (solid) and $M=8$ (dashed) for errors $\frac{\|u_{N}-u_{2N}\|_{\infty}}{\|u_{2N}\|_{\infty}}$ (red) and
			 $\frac{\|u_{N}-u_{2N}\|_{2}}{\|u_{2N}\|_{2}}$ (blue). The displayed convergence rates are obtained by calculating $\log_2\left(\frac{\|u_{N}-u_{2N}\|_{\infty} /\|u_{2N}\|_{\infty} }{
 \|u_{2N}-u_{4N}\|_{\infty}/\|u_{4N}\|_{\infty}}\right)$ and $\log_2\left(\frac{\|u_{N}-u_{2N}\|_{2}/\|u_{2N}\|_{2}
 }{ \|u_{2N}-u_{4N}\|_{2}/\|u_{4N}\|_{2}}\right)$. The real (middle) and imaginary (right) parts of $u_N$ with $N=2^{13}$, $\ell=10$ and $M=8$.}
		\label{fig:2D:f}
	\end{figure}

\begin{example} \label{ex:2D:waveguide}
	\normalfont
	Consider the following 2D Helmholtz equation
$\nabla \cdot (\nabla u) + \kappa^2 u = f$ on $\Omega=(0,1)^{2}$,
	where
{\footnotesize{
	\begin{align*}
		\kappa_0(x) & := 2^{\frac{5}{2}}(900000 x^5 - 2250000 x^4 + 2130000x^3 - 945000x^2 + 198450 x - 15445)^{\frac{1}{2}} \chi_{[\frac{3}{10},\frac{7}{10})}
+ 2^7 (\chi_{[0,\frac{3}{10})}  +2 \chi_{[\frac{7}{10},1]}),\\
		f(x,y) & := 5000 \sqrt{2} \left((6x+1)\cos(6.5\pi y)\chi_{[0,\frac{3}{10})\times[0,1]} + 2\cos(25.5\pi y)\chi_{[0,\frac{3}{10})\times[0,1]}
+ (-6x+7)\cos(6.5\pi y) \chi_{[\frac{7}{10},1]\times[0,1]}\right),
	\end{align*}
}}
and $\kappa(x,y):=\kappa_0(x)$ with the following boundary conditions
	\[
	\begin{split}
		& \tfrac{\partial u}{\partial \nu} = 0 \quad \mbox{on} \quad (0,1) \times \{0\},
		\qquad \tfrac{\partial u}{\partial \nu} - i2^8u=0 \quad \mbox{on} \quad \{1\} \times (0,1), \qquad u=0 \quad \mbox{on} \quad (0,1) \times \{1\},\\
		&
		\mbox{and} \qquad u = \sqrt{2}(\cos(14.5\pi y) + \cos(30.5\pi y)) \quad \mbox{on}  \quad \{0\} \times (0,1).
	\end{split}
	\]
	By the separation of variables, the exact solution $\eu$ to the above 2D Helmholtz equation is given by the series $\eu(x,y)=
\sum_{m=0}^\infty \sqrt{2} v_m(x) \cos((m+1/2)\pi y)$, where for each $m\in \NN$, $v_m$ satisfies
\[
v_{m}''+(\kappa_0^2 - (m+\tfrac{1}{2})^2 \pi^2)v_{m} = f_m(x), \quad  x \in(0,1) \quad \mbox{with} \quad v_{m}(0)=\dirac_{14,m}+\dirac_{30,m}, \quad v_{m}'(1)- 2^8 i v_{m}(1) =0,
\]
and
$f_m(x):=\sqrt{2}\int_{0}^{1}f(x,y)\cos((m+\tfrac{1}{2})\pi y) dy$ can be efficiently computed through FFT.
Note that $f_m$ are zero except $m=6,25$.
Since $v_m$ are zero for all $m\in \NN\backslash\{6,14,25,30\}$,
our approximated solution is of the form $u_{N}=\sqrt{2}(v_{6,N}\cos(6.5\pi y) + v_{14,N}\cos(14.5\pi y) + v_{25,N}\cos(25.5\pi y) +v_{30,N}\cos(30.5\pi y))$, where $v_{m,N}$ with $m = 6,14,25,30$ are the approximated solutions to $v_{m}$ in \eqref{2Dto1D:f} using $N$ points. We use $2049$ points to discretize $\cos((m+1/2)\pi y)$ for $m=6,14,25,30$ in our approximated solutions. Also note that the following ``Local CN" and ``Link CN" record the maximum condition number of all local problems and all $m = 6,14,25,30$. See \cref{tab:2D:waveguide} for the numerical performance measured by both $\frac{\|u_{N}-u_{2N}\|_\infty}{\|u_{2N}\|_\infty}$ and
 $\frac{\|u_{N}-u_{2N}\|_2}{\|u_{2N}\|_2}$, and \cref{fig:2D:waveguide} for the convergence plot and approximated solution $u_N$. Due to the separation of variables, the convergence rates observed in the plot are solely driven by the convergence rates that take place in each 1D problem. As can be seen from \cref{tab:2D:waveguide}, the convergence rates agree with the theoretical discussion in \cref{sec:fdm,sec:convergence}.
\end{example}

{\tiny
	\begin{center}
		\begin{tabular}{c c | c c c c | c c c c}
			\hline
			\hline
			\multicolumn{2}{c}{} \vline & \multicolumn{4}{c}{DAT using the compact FDM with order $M=6$} \vline & \multicolumn{4}{c}{DAT using the compact FD with order $M=8$}\\
			\hline
			$N$ & $\ell$ & $\frac{\|u_{N}-u_{2N}\|_{\infty}}{\|u_{2N}\|_{\infty}}$  & $\frac{\|u_{N}-u_{2N}\|_{2}}{\|u_{2N}\|_{2}}$ & Local CN & Link CN  & $\frac{\|u_{N}-u_{2N}\|_{\infty}}{\|u_{2N}\|_{\infty}}$  & $\frac{\|u_{N}-u_{2N}\|_{2}}{\|u_{2N}\|_{2}}$ & Local CN & Link CN\\
			\hline			
			$3(2^{6})$ & $0$ & $1.2184\times 10^{-2}$ & $1.4965 \times 10^{-2}$ & $1.52 \times 10^{5}$ & $-$ &
			$2.5802\times 10^{-3}$ & $3.4368 \times 10^{-3}$ & $4.52 \times 10^{5}$ & $-$\\
			& $2$ & $1.2184\times 10^{-2}$ & $1.4965 \times 10^{-2}$ & $1.54 \times 10^{3}$ & $5.18 \times 10^{2}$ &
			$2.5802 \times 10^{-3}$ & $3.4368 \times 10^{-3}$ & $3.22 \times 10^{3}$ & $5.19 \times 10^{2}$\\
			\hline
			$3(2^{7})$ & $0$ & $1.9809 \times 10^{-4}$ & $2.3348 \times 10^{-4}$ & $7.48\times 10^{3}$ & $-$ &
			$1.1417 \times 10^{-5}$ & $1.5130 \times 10^{-5}$ & $7.48 \times 10^{3}$ & $-$\\
			& $3$ & $1.9809 \times 10^{-4}$ & $2.3348 \times 10^{-4}$ & $2.60\times 10^{3}$ & $5.19 \times 10^{2}$ &
			$1.1417 \times 10^{-5}$ & $1.5130 \times 10^{-5}$ & $2.60 \times 10^{3}$ & $5.19 \times 10^{2}$\\
			\hline
			$3(2^{8})$ & $0$ & $3.1521 \times 10^{-6}$ & $3.7059\times 10^{-6}$ & $2.96\times 10^{4}$ & $-$ &
			$4.6092 \times 10^{-8}$ & $6.0943 \times 10^{-8}$ & $2.96\times 10^{4}$ & $-$\\
			& $4$ & $3.1521 \times 10^{-6}$ & $3.7059\times 10^{-6}$ & $3.85\times 10^{4}$ & $8.49 \times 10^{2}$ &
			$4.6094 \times 10^{-8}$ & $6.0946\times 10^{-8}$ & $3.85\times 10^{4}$ & $8.49 \times 10^{2}$\\
			\hline
			$3(2^{9})$ & $0$ & $4.9446 \times 10^{-8}$ & $5.8172 \times 10^{-8}$ & $1.18 \times 10^{5}$ & $-$ &
			$1.8328 \times 10^{-10}$ & $2.3989 \times 10^{-10}$ & $1.18 \times 10^{5}$ & $-$\\
			& $5$ & $4.9447 \times 10^{-8}$ & $5.8172 \times 10^{-8}$ & $1.54 \times 10^{2}$ & $8.87 \times 10^{3}$ &
			$1.8344 \times 10^{-10}$ & $2.4014 \times 10^{-10}$ & $1.54 \times 10^{2}$ & $8.87 \times 10^{3}$\\	 
			\hline
		\end{tabular}	
		 \captionof{table}{Relative errors for \cref{ex:2D:waveguide} using DAT with $N_0=12$ and $s=1$ in \cref{alg:dat}. The grid increments used in each $[0,\frac{3}{10}]$, $[\frac{3}{10},\frac{7}{10}]$, and $[\frac{7}{10},1]$ are respectively $\frac{9}{10N}$, $\frac{6}{5N}$, and $\frac{9}{10N}$.}
		 \label{tab:2D:waveguide}
	\end{center}
}

\begin{figure}[htbp]
	\centering
	 \begin{subfigure}[b]{0.3\textwidth}
		\begin{tikzpicture}
			\begin{axis}
				[
				 xlabel={\footnotesize{$N\!\!=$}},
				xlabel style ={xshift=-2.3cm,yshift=0.48cm},
				 ylabel={\footnotesize{Relative errors}},
				 height=0.9\textwidth,
				 width=\textwidth,
				xmode=log,
				log basis x={2},
				ymode=log,
				log basis y={10},
				 ytick={10e-10,10e-8,10e-6,10e-4,10e-2,10e-0},
				legend style={nodes={scale=0.6, transform shape}},
				legend pos=outer north east
				]
				 \addplot[thick,mark=*,blue] coordinates
				 {(192,1.2184e-2)
					 (384,1.9809e-4)
					 (768,3.1521e-6)
					 (1536,4.9447e-8)};
				 \addplot[thick,mark=*,red] coordinates
				 {(192,1.4965e-2)
					 (384,2.3348e-4)
					 (768,3.7059e-6)
					 (1536,5.8172e-8)};
				 \addplot[thick,dashed,mark=*,blue] coordinates
				 {(192,2.5802e-3)
					 (384,1.1417e-5)
					 (768,4.6094e-8)
					 (1536,1.8344e-10)};
				 \addplot[thick,dashed,mark=*,red] coordinates
				 {(192,3.4368e-3)
					 (384,1.5130e-5)
					 (768,6.0946e-8)
					 (1536,2.4014e-10)};
			\end{axis}
			\node (B) at (1.1,0.75) {\tiny \scalebox{0.8}{$\mathcal{O}(N^{-8})$}};
			\node (A) at (1.9,1.6) {};
			\draw[thick,->] (B) edge (A);
			\node (D) at (2.80,2.75) {\tiny \scalebox{0.8}{$\mathcal{O}(N^{-6})$}};
			\node (C) at (2.05,1.90) {};
			\draw[thick,->] (D) edge (C);
		\end{tikzpicture}
	\end{subfigure}
	 \begin{subfigure}[b]{0.3\textwidth}
		 \raisebox{0.1cm}{\includegraphics[width=\textwidth]{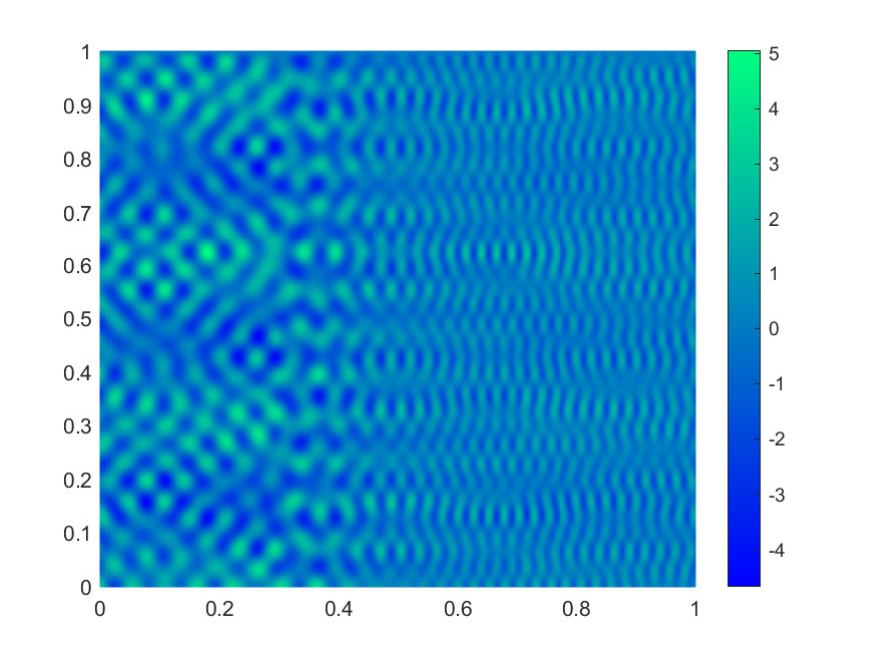}}
	\end{subfigure}
	 \begin{subfigure}[b]{0.3\textwidth}
		 \raisebox{0.1cm}{\includegraphics[width=\textwidth]{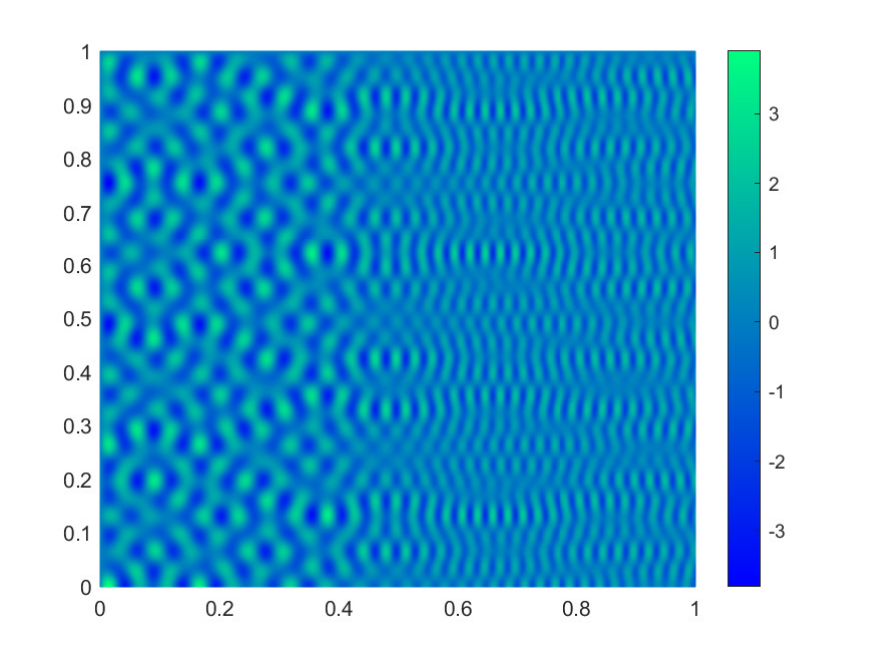}}
	\end{subfigure}
	\caption
	{\cref{ex:2D:waveguide}:
		Convergence plot (left) of DAT using the compact FDM  with order $M=6$ (solid) and $M=8$ (dashed) for errors $\frac{\|u_{N}-u_{2N}\|_{\infty}}{\|u_{2N}\|_{\infty}}$ (red) and
		 $\frac{\|u_{N}-u_{2N}\|_{2}}{\|u_{2N}\|_{2}}$ (blue). The displayed convergence rates are obtained by calculating $\log_2\left(
\frac{\|u_{N}-u_{2N}\|_{\infty} /\|u_{2N}\|_{\infty} }{ \|u_{2N}-u_{4N}\|_{\infty}/\|u_{4N}\|_{\infty}}\right)$ and $\log_2\left(\frac{\|u_{N}-u_{2N}\|_{2}/\|u_{2N}\|_{2}  }{ \|u_{2N}-u_{4N}\|_{2}/\|u_{4N}\|_{2}}\right)$. The real (middle) and imaginary (right) parts of $u_N$ with $N=3(2^9)$, $\ell=5$ and $M=8$.}
	\label{fig:2D:waveguide}
\end{figure}

\subsection{DAT and compact FDMs using only function values}

The direct usage of derivatives of $a,\kappa^2,f$ in \cref{thm:interior,thm:bdry} may not be computationally efficient. These derivatives can in fact be estimated by using only function values of $a,\kappa^2,f$ through a local polynomial approximation. For example, if we consider an interior stencil of the form \eqref{cbo:special} and \eqref{dcoeff:special},
we know from \eqref{prop:span} that all stencil coefficients depend on $a(\xb),a'(\xb),\ldots,a^{(M-1)}(\xb)$, $\kappa^2(\xb),[\kappa^2]'(\xb),\ldots,[\kappa^2]^{(M-2)}(\xb)$, and $f(\xb),f'(\xb),\ldots,f^{(M-2)}(\xb)$. Consider $a(\xb),a'(\xb),\ldots,a^{(M-1)}(\xb)$. Let $J\ge M$ and
take $J$ points $\{x_j\}_{j=1}^J$ near the base point $\xb$ such that all the points fall into one piece of the piecewise smooth functions $a,\kappa^2$ and $f$.
Find the unique polynomial $p$ of degree $J-1$ satisfying $p(x_j)=a(x_j)$ for all $j=1,\ldots,J$.
Then $a^{(n)}(\xb)\approx p^{(n)}(\xb)$ for $n=0,\ldots,M-1$. We often use $J=M$ and $\xb\in \{x_j\}_{j=1}^J$ such that $\{x_j\}_{j=1}^J$ is evenly spaced with mesh size $h/2$.

Using only function values, we re-calculate numerical experiments in Examples~\ref{ex:constant}--\ref{ex:2D:waveguide}, which yield virtually same results as those using derivatives explicitly. It demonstrates the convenience of using a local polynomial approximation in lieu of true derivatives, which may have complicated expressions. For the sake of conciseness, we only provide re-calculated \cref{ex:piece4het,ex:newphet}.

{\tiny
	\begin{center}
		 \resizebox{\textwidth}{!}{\begin{tabular}{c c | c c c c | c c | c c c c}
			\hline
			\hline
				 \multicolumn{2}{c}{} \vline & \multicolumn{4}{c}{DAT using the compact FDM with order $M=6$} \vline & \multicolumn{2}{c}{} \vline & \multicolumn{4}{c}{DAT using the compact FDM with order $M=6$}\\
				 \multicolumn{2}{c}{} \vline & \multicolumn{4}{c}{for \cref{ex:piece4het}} \vline & \multicolumn{2}{c}{} \vline & \multicolumn{4}{c}{for \cref{ex:newphet}}\\
			\hline
			$N$ & $(\ell,s)$ & $\frac{\|u_{N}-u_{2N}\|_{\infty}}{\|u_{2N}\|_{\infty}}$ & $\frac{\|u'_{N}-u'_{2N}\|_{\infty}}{\|u'_{2N}\|_{\infty}}$ & Local CN & Link CN & $N$ & $\ell$ & $\frac{\|u_N-u_{2N-1}\|_{\infty}}{\|u_{2N-1}\|_{\infty}}$ & $\frac{\|u'_{N}-u'_{2N-1}\|_{\infty}}{\|u'_{2N-1}\|_{\infty}}$ & Local CN & Link CN \\
			\hline
			$2^{15}$ & $(0,0)$ & $4.7473 \times 10^{-2}$ & $6.7611 \times 10^{-2}$ & $5.19 \times 10^{6}$ & $-$ &
			$2^{10}+1$ & $0$ & $2.3932$ & $1.8969$ & $1.25 \times 10^{6}$ & $-$ \\ 
			& $(5,1)$ & $4.7473\times 10^{-2}$ & $6.7611 \times 10^{-2}$ & $2.07 \times 10^{5}$ & $8.41 \times 10^{3}$ &
			& $5$ & $2.3932$ & $3.3990$ & $2.66 \times 10^{4}$ & $3.44 \times 10^{3}$ \\ 
			& $(3,2)$ & $4.7473\times 10^{-2}$ & $6.7611 \times 10^{-2}$ & $2.07 \times 10^{5}$ & $2.90 \times 10^{4}$ & & & & & & \\
			\hline
			$2^{16}$ & $(0,0)$ & $7.2618 \times 10^{-4}$ & $1.0353 \times 10^{-3}$ & $2.22 \times 10^{7}$ & $-$ &
			$2^{11}+1$ & $0$ & $6.9794 \times 10^{-3}$ & $6.4621 \times 10^{-3}$ & $3.13 \times 10^{4}$ & $-$ \\ 
			& $(5,1)$ & $7.2618 \times 10^{-4}$ & $1.0353 \times 10^{-3}$ &  $8.29 \times 10^{5}$ & $8.41 \times 10^{3}$ &
			& $5$ & $6.9794 \times 10^{-3}$ & $6.4621 \times 10^{-3}$ & $9.00 \times 10^{3}$ & $6.92 \times 10^{3}$ \\ 
			& $(3,2)$ & $7.2618 \times 10^{-4}$ & $1.0353 \times 10^{-3}$ &  $8.29 \times 10^{5}$ & $2.92 \times 10^{4}$  & & & & & & \\
			\hline
			$2^{17}$ & $(0,0)$ & $1.1182\times 10^{-5}$ & $1.5959 \times 10^{-5}$ & $8.89 \times 10^{7}$ & $-$ &
			$2^{12}+1$ & $0$ & $1.0216 \times 10^{-4}$ & $9.0880 \times 10^{-5}$ & $9.03 \times 10^{4}$ & $-$ \\ 
			& $(5,1)$ & $1.1188\times 10^{-5}$ & $1.5967 \times 10^{-5}$ &  $3.32 \times 10^{6}$ & $8.41 \times 10^{3}$ &
			& $5$ & $1.0216 \times 10^{-4}$ & $9.0880 \times 10^{-5}$ & $3.66 \times 10^{4}$ & $2.53 \times 10^{3}$\\ 
			& $(3,2)$ & $1.1179\times 10^{-5}$ & $1.5955 \times 10^{-5}$ & $3.32 \times 10^{6}$ & $2.92 \times 10^{4}$ & & & & & &\\
			\hline
			$2^{18}$ & $(0,0)$ & $1.7274\times 10^{-7}$ & $2.4663 \times 10^{-7}$ & $3.56\times 10^{8}$ & $-$ &
			$2^{13}+1$ & $0$ & $1.5468 \times 10^{-6}$ & $1.4036 \times 10^{-6}$ & $3.60 \times 10^{5}$ & $-$\\ 
			& $(5,1)$ & $1.6132\times 10^{-7}$ & $2.3056 \times 10^{-7}$ & $1.33 \times 10^{7}$ & $8.41 \times 10^{3}$ &
			& $5$ & $1.5468 \times 10^{-6}$ & $1.4036 \times 10^{-6}$ & $1.47 \times 10^{5}$ & $2.51 \times 10^{3}$ \\ 
			& $(3,2)$ & $1.7914\times 10^{-7}$ & $2.5548 \times 10^{-7}$ & $1.33 \times 10^{7}$ & $2.92 \times 10^{4}$  & & & & & & \\
			\hline
		\end{tabular}}
		 \captionof{table}{Relative errors for \cref{ex:piece4het,ex:newphet} using only point values (without explicitly computing derivatives) in DAT with the compact FDM with order $M=6$.}
		 \label{tab:piece4hetptval}
	\end{center}
}

\section{Conclusions}
\label{sec:conclusions}

In this paper, we have presented a new method called DAT (Dirac Assisted Tree), which is capable of handling 1D heterogeneous Helmholtz equation
and special multidimensional Helmholtz equations that can be decomposed into a series of 1D problems.
One of DAT's strengths is its ability to break a global problem into many parallel local problems with a tree structure, which are then assembled by solving small linking problems. In the extreme case, these local and linking problems are at most $4 \times 4$ in size that can be solved in a parallel fashion.
Another strength of DAT lies in that DAT can handle 1D heterogeneous Helmholtz equations with arbitrarily large variable wave numbers and oscillatory jumping coefficients having large variations.
To solve local problems arising from DAT, we propose an arbitrarily accurate compact FDM.
Finally, we have also applied our DAT algorithm coupled with the proposed FDM to solve several 1D heterogeneous Helmholtz equations and several special 2D Helmholtz equations to showcase its efficacy. The convergence behaves in accordance with the theory discussed in \cref{sec:fdm,sec:convergence} and the coefficient matrices arising from DAT are, as expected, better conditioned than FDM. In particular, for any given positive integer $N_0$ independent of the total number of freedoms $N$ and the mesh size $h$ with $h=1/N$, if all local problems in DAT employing the developed FDMs are at most $N_0\times N_0$ in size, then all the condition numbers of all local problems in DAT must be uniformly bounded.


\begin{thebibliography}{99}
\bibitem{bm97}
I.~Babu\v{s}ka and J.~M.~Melenk, The partition of unity method. \emph{Internat. J. Numer. Methods Engrg.} \textbf{40} (1997), no. 4, 727--758.

\bibitem{BS00} I.~M.~Babu\v{s}ka and S.~A.~Sauter, Is the pollution effect of the FEM avoidable for the Helmholtz equation considering high wave numbers? \emph{SIAM Rev.} \textbf{42} (2000), no. 3, 451-484.
	
\bibitem{BTT11} S. Britt, S. Tsynkov, and E. Turkel, Numerical simulation of time-harmonic waves in inhomogeneous media using compact high order schemes. \emph{Commun. Comput. Phys.} \textbf{9} (2011), no. 3, 520-541.

\bibitem{BWZ16} E. Burman, H. Wu, and L. Zhu, Linear continuous interior penalty finite element method for Helmholtz equation with high wave number: one-dimensional analysis. \emph{Numer. Meth. Par. Diff. Equ.}  \textbf{32} (2016), no. 5, 1378-1410.

\bibitem{C16} T.~Chaumont-Frelet, On high order methods for the heterogeneous Helmholtz equation. \emph{Comput. Math. Appl.} \textbf{72} (2016), no. 9, 2203-2225.

\bibitem{CCFW13} Z. Chen, D. Cheng, W. Feng, and T. Wu, An optimal 9-point finite difference scheme for the Helmholtz equation with PML. \emph{Int. J. Numer. Anal. Mod.} \textbf{10} (2013), no. 2, 389-410.

\bibitem{cgx18}
P.-H.~Cocquet, M.~J.~Gander, and X.~Xiang, A finite difference method with optimized dispersion correction for the Helmholtz equation. Domain decomposition methods in science and engineering XXIV, \emph{Lecture Notes in Computational Science and Engineering} \textbf{125}, Springer, Cham, 2018, 205-213.

\bibitem{CGX20}
P.-H.~Cocquet, M.~J.~Gander, and X.~Xiang, Dipersion correction for Helmholtz in 1D with piecewise constant wavenumber. Domain decomposition methods in science and engineering XXV, \emph{Lecture Notes in Computational Science and Engineering} \textbf{138}, Springer, Cham, 2020, 359-366.

\bibitem{DL19} H. Dastour and W. Liao, A fourth-order optimal finite difference scheme for the Helmholtz equation with PML. \emph{Comput. Math. Appl.} \textbf{78} (2019), no. 6, 2147-2165.


\bibitem{EOV06} Y.~A.~Erlangga, C.~W.~Oosterlee, and C.~Vuik, A novel multigrid based preconditioner for heterogeneous Helmholtz problems. \emph{SIAM J. Sci. Comput.} \textbf{27} (2006), no. 4, 1471-1492.

\bibitem{EG11} O.~G.~Ernst and M.~J.~Gander, Why is it difficult to solve Helmholtz problems with classical iterative methods. Numerical analysis of multiscale problems, \emph{Lecture Notes in Computational Science and Engineering} \textbf{83}, Springer, Berlin, Heidelberg, 2011, 325-363.


\bibitem{EG13} O.~G.~Ernst and M.~J.~Gander, Multigrid methods for Helmholtz problems: A convergent scheme in 1D using standard components. Direct and inverse problems in wave propagation and applications, \emph{Radon Series on Computational and Applied Mathematics} \textbf{14}, De Gruyter, Berlin, 2013, 135-186.


\bibitem{FW09} X. Feng and H. Wu, Discontinuous Galerkin methods for the Helmholtz equation with large wave number. \emph{SIAM J. Numer. Anal.} \textbf{47} (2009), no. 4, 2872-2896.

\bibitem{FLQ11} X. Feng, X. Li, and Z. Qiao, High order compact finite difference schemes for the Helmholtz equation with discontinuous coefficients. \emph{J. Comput. Math.} \textbf{29} (2011), no. 3, 324-340.

\bibitem{F12} X. Feng, A high-order compact scheme for the one-dimensional Helmholtz equation with a discontinuous coefficient. \emph{Int. J. Numer. Anal. Mod.} \textbf{89} (2012), no. 5, 618-624.

\bibitem{F08} Y. Fu, Compact fourth-order finite difference schemes for Helmholtz equation with high wave numbers. \emph{J. Comput. Math.} \textbf{26} (2008), no. 1, 98-111.

\bibitem{FG17} S. Fu and K. Gao, A fast solver for the Helmholtz equation based on the generalized multiscale finite-element method. \emph{Geophys. J. Int.} \textbf{211} (2017), no. 2, 797-813.

\bibitem{GZ19} M. J. Gander and H. Zhang, A class of iterative solvers for the Helmholtz equation: factorizations, sweeping preconditioners, source transfer, single layer potentials, polarized traces, and optimized Schwarz methods. \emph{SIAM Rev.} \textbf{61} (2019), no. 1, 3-76.


\bibitem{GS20}
I.~G.~Graham and S.~A.~Sauter, Stability and finite element error analysis for the Helmholtz equation with variable coefficients. \emph{Math. Comp.} \textbf{89} (2020), no. 321, 105-138.



\bibitem{GPS19}
I.~G.~Graham, O.~R.~Pembery, and E.~A.~Spence, The Helmholtz equation in heterogeneous media: a priori bounds, well-posedness, and resonances. \emph{J. Differential Equations} \textbf{266} (2019), no. 6, 2869--2923.

\bibitem{hanbook}
B.~Han, Framelets and wavelets: Algorithms, analysis, and applications. \emph{Applied and Numerical Harmonic Analysis}. Birkh\"auser/Springer, Cham, 2017. xxxiii + 724 pp.

\bibitem{HMP16} R. Hiptmair, A. Moiola, and I. Perugia, A survey of Trefftz methods for the Helmholtz equation. Building bridges: connections and challenges in modern approaches to numerical partial differential equations, \emph{Lecture Notes in Computational Science and Engineering} \textbf{114}, Springer, Cham, 2016, 237-279.

\bibitem{LBPT05} O. Lagrouche, P. Bettess, E. Perrey-Debain, and J. Trevelyan, Wave interpolation finite elements for Helmholtz problems with jumps in the wave speed. \emph{Comput. Methods Appl. Mech. Engrg.} \textbf{194} (2005), no. 2-5, 367-381.

\bibitem{MB96} J.~M.~Melenk and I.~Babuska, The partition of unity finite element method: basic theory and applications. \emph{Comput. Methods Appl. Mech. Engrg.} \textbf{139} (1996), no. 1-4, 289-314.

\bibitem{MS11} J.~M.~Melenk and S. Sauter, Wavenumber explicit convergence analysis for Galerkin discretizations of the Helmholtz equation. \emph{SIAM J. Numer. Anal.} \textbf{49} (2011), no. 3, 1210-1243.

\bibitem{PR11} J.~Popovic and O.~Runborg, Analysis of a fast method for solving the high frequency Helmholtz equation in one dimension. \emph{BIT Numer. Math.} \textbf{51} (2011), 721-755.

\bibitem{NSD07} M. Nabavi, M. H. Kamran Siddiqui, and J. Dargahi, A new 9-point sixth-order accurate compact finite-difference method for the Helmholtz equation. \emph{J. Sound. Vib.} \textbf{307} (2007), no. 3-5, 972-982.

\bibitem{N01} J.-C.~Nedelec, Acoustic and electromagnetic equations. Integral representations for harmonic problems, \emph{Applied Mathematical Sciences} \textbf{144}. Springer-Verlag, New York, 2001. x+316 pp.

\bibitem{SDKS13} S.~O.~Settle, C.~C.~Douglas, I. Kim, and D. Sheen, On the derivation of highest-order compact finite difference schemes for the one- and two-dimensional Poisson equation with Dirichlet boundary conditions. \emph{SIAM J. Numer. Anal.} \textbf{51} (2013), no. 4, 2470-2490.

\bibitem{SHC08} T.~W.~Sheu, L.~W.~Hsieh, and C.~F.~Chen, Development of a three-point sixth-order Helmholtz scheme. \emph{J. Comput. Acoust.} \textbf{16} (2008), no. 3, 343-359.

\bibitem{ST98} I. Singer and E. Turkel, High-order finite difference methods for the Helmholtz equation. \emph{Comput. Methods Appl. Mech. Engrg.} \textbf{163} (1997), no. 1-4, 343-358.


\bibitem{SFL16} X. Su, X. Feng, and Z. Li, Fourth-order compact schemes for Helmholtz equations with piecewise wave numbers in the polar coordinates. \emph{J. Comput. Math.} \textbf{34} (2016), no. 5, 499-510.

\bibitem{TGGT13} E. Turkel, D. Gordon, R. Gordon, and S. Tsynkov, Compact 2D and 3D sixth order schemes for the Helmholtz equation with variable wave number. \emph{J. Comp. Phys.} \textbf{232} (2013), no. 1, 272-287.

\bibitem{WTTF12} D. Wang, R. Tezaur, J. Toivanen, and C. Farhat, Overview of the discontinuous enrichment method, the ultra-weak variational formulation, and the partition of unity method for acoustic scattering in the medium frequency regime and performance comparisons. \emph{Int. J. Numer. Meth. Engng.} \textbf{89} (2012), no. 4, 403-417.

\bibitem{WW14}
K. Wang and Y.~S. Wong, Pollution-free finite difference schemes for non-homogeneous Helmholtz equation. \emph{Int. J. Numer. Anal. Mod.} \textbf{11} (2014), no. 4, 787-815.


\bibitem{WW17} K. Wang and Y.~S. Wong, Is pollution effect of finite difference schemes avoidable for multi-dimensional Helmholtz equations with high wave numbers? \emph{Commun. Comput. Phys.} \textbf{21} (2017), no. 2, 490-514.

\bibitem{WWH17} K. Wang, Y.~S. Wong, and J.~Huang, Solving Helmholtz equation at high wave numbers in exterior domains. \emph{Appl. Math. Comp.} \textbf{298} (2017), 221-235.


\bibitem{WL11} Y. S. Wong and G. Li, Exact finite difference schemes for solving Helmholtz equation at any wave number. \emph{Int. J. Numer. Anal Mod.} \textbf{2} (2011), no. 1, 91-108.

\bibitem{WX18}
T. Wu and R. Xu, An optimal compact sixth-order finite difference scheme for the Helmholtz equation. \emph{Comput. Math. Appl.} \textbf{75} (2018), no. 7, 2520-2537.


\bibitem{ZWG19} Y. Zhang, K. Wang, and R. Guo, Sixth-order finite difference scheme for the Helmholtz equation with inhomogeneous Robin boundary condition. \emph{Adv. Differ. Equ.} \textbf{362} (2019), 1-15.

\bibitem{ZW13} L.~Zhu and H.~Wu. Preasymptotic error analysis of CIP-FEM and FEM for Helmholtz
equation with high wave number. Part II: hp version. \emph{SIAM J. Numer. Anal.} \textbf{51} (2013), no. 3, 1828-1852.
\end{thebibliography}
\end{document}